\newtheorem{theorem}{Theorem}[section]
\newtheorem{proposition}[theorem]{Proposition}
\newtheorem{lemma}[theorem]{Lemma}
\theoremstyle{definition}
\newtheorem{definition}[theorem]{Definition}
\newcommand{\bdd}{\mbox{$\partial$}}
\theoremstyle{remark}
\numberwithin{equation}{section}
\begin{document}

\title[$1$-genus $1$-bridge knots] {
Genus two Heegaard splittings
of exteriors of \\$1$-genus $1$-bridge knots
}

\author{Hiroshi Goda}
\author{Chuichiro Hayashi}

\date{\today}

\thanks{The first and second authors are partially supported
by Grant-in-Aid for Scientific Research, (No. 21540071 and 18540100),
Ministry of Education, Science, Sports and Culture.}

\begin{abstract}
A knot $K$ in a closed connected orientable $3$-manifold $M$ 
is called a $1$-genus $1$-bridge knot if $(M,K)$ has a 
splitting into two pairs of a solid torus $V_i$ $(i=1,2)$ 
and a boundary parallel arc in it.
The splitting induces a genus two Heegaard splitting of the exterior 
of $K$ naturally, i.e., $K$ has an unknotting tunnel. 
However the converse is not true in general. 
Then we study such general case in this paper. 
One of the conclusions is that the unknotting tunnel 
may be levelled with the torus $\partial V_1=\partial V_2$.
\end{abstract}

\maketitle


\section{Introduction}

A properly embedded arc $t$ in a solid torus $V$
is called {\it trivial\/}
if it is boundary parallel,
that is, there is a disk $C$ embedded in $V$
such that $t \subset \bdd C$
and $C \cap \bdd V = \text{cl}\,(\bdd C - t)$.
We call such a disk a {\it canceling disk\/} of the trivial arc $t$.
In this paper, we denote by $M$ a closed connected orientable $3$-manifold.
Let $K$ be a knot in $M$.
We call $K$ a {\it $1$-genus $1$-bridge knot\/} in $M$
if $M$ is a union of two solid tori $V_1$ and $V_2$
glued along their boundary tori $\bdd V_1$ and $\bdd V_2$
and if $K$ intersects each solid torus $V_i$ in a trivial arc $t_i$
for $i=1$ and $2$.
The splitting $(M, K) = (V_1, t_1) \cup_{H_{1}} (V_2, t_2)$
is called a {\it $1$-genus $1$-bridge splitting\/} of $(M, K)$,
where $H_{1} = V_1 \cap V_2 = \bdd V_1 = \bdd V_2$, the torus.
We call also the splitting torus $H_{1}$
a {\it $1$-genus $1$-bridge splitting}.
We say $(1,1)$-knots and $(1,1)$-splitting for short.

It is well-known
that all torus knots and $2$-bridge knots
in the $3$-sphere $S^3$ are $1$-genus $1$-bridge knots.
$(1,1)$-splittings of these knots are studied
by K. Morimoto \cite{M}, and T. Kobayashi - O. Saeki \cite{KS}.
In \cite{Hy3}, the second author studied $(1,1)$-splittings 
of $1$-genus $1$-bridge knots.

We recall the definition of a $(2,0)$-splitting.
Let $W$ be a handlebody,
and $K$ a knot in $\text{int}\,W$.
We say $K$ is a {\it core\/} in $W$
if there are a disk $D$ and an annulus $A$
such that $D$ is properly embedded in $W$
and intersects $K$ transversely in a single point
and that $A$ is embedded in $W$
with $K \subset \bdd A$ and $A \cap \bdd W = \bdd A-K$.
We say that the pair $(M, K)$ admits a {\it $(2,0)$-splitting\/}
if $M$ is a union of two handlebodies of genus two,
say $W_1$ and $W_2$,
glued along
$\bdd W_1$ and $\bdd W_2$
and
if $K$ forms a core in $W_1$.
The closed surface $H_{2} = \bdd W_1 = \bdd W_2 = W_1 \cap W_2$
gives the splitting $(M, K) = (W_1, K) \cup_{H_{2}} (W_2, \emptyset)$
and is called a {\it $(2,0)$-splitting surface\/}
or
{\it a $(2,0)$-splitting\/} for short.
It is easy to see
that $\text{cl}\,(W_1 - N(K))$ is a compression body
homeomorphic to a union of (a torus)\,$\times [0,1]$
and a $1$-handle
which has attaching disks in (a torus)\,$\times \{ 1 \}$.
Hence
$H_{2} = \bdd W_1 = \bdd W_2$
gives a genus two Heegaard splitting
of the exterior of
$K$.

 A $(1,1)$-knot admits a $(2,0)$-splitting as follows.
 Let $(M,K) = (V_1, t_1) \cup_{H_{1}} (V_2, t_2)$ be a $(1,1)$-splitting.
 We take a regular neighborhood $N(t_2)$ of the arc $t_2$ in $V_2$.
 Then $(M, K)=(V_1 \cup N(t_2), K)
\cup (\text{cl}\,(V_2-N(t_2)), \emptyset)$
is a $(2,0)$-splitting.
 If we take a regular neighborhood $N(t_1)$
of the arc $t_1$ in $W_1$,
then $(M, K)=(V_2 \cup N(t_1), K)
\cup (\text{cl}\,(V_1-N(t_1)), \emptyset)$
is another $(2,0)$-splitting.
 These are $(2,0)$-splittings
naturally derived from the $(1,1)$-splitting.
 Such $(2,0)$-splittings are characterized
in the following manner.
 A $(2,0)$-splitting $(M, K) = (W_1, K) \cup_H (W_2, \emptyset)$
is {\it meridionally stabilized\/}
if there is a disk $D_i$ properly embedded in $W_i$ for $i=1$ and $2$
such that
$\bdd D_1$ and $\bdd D_2$ intersect
each other transversely in a single point in $H = \bdd W_1 = \bdd W_2$
and that $D_1$ intersects
$K$ transversely
in a single point.
A $(2,0)$-splitting
$(M, K)=(V_i \cup N(t_j), K)
\cup (\text{cl}\,(V_j-N(t_j)), \emptyset)$,
which is derived from a $(1,1)$-splitting
$(M,K) = (V_1, t_1) \cup (V_2, t_2)$,
is meridionally stabilized
since we can take the disk $D_1$
to be a meridian disk of the arc $t_j$ in $N(t_j)$,
and the disk $D_2$ to be a canceling disk of the arc $t_j$.
Conversely,
we can obtain a $(1,1)$-splitting torus
by compressing the meridionally stabilized $(2,0)$-splitting surface
along
$D_1$.

Here is a question:
Is any $(2,0)$-splitting of a $(1,1)$-knot meridionally stabilized ?
It was pointed out by K. Morimoto that
every torus knot has
only a single isotopy class of $(1,1)$-splitting torus,
which is an easy corollary of Theorem 3 in \cite{M}
and the uniqueness of genus one Heegaard splitting
(see \cite{Bo}, \cite{BoO} and \cite{RS}).
If all the $(2,0)$-splitting were meridionally stabilized
for some torus knot,
then the torus knot exterior
would have at most two genus two Heegaard splittings
derived from the unique $(1,1)$-splitting.
 However, there is a torus knot
such that its exterior has three genus two Heegaard splittings
as shown in \cite{BRZ}
by Z. Boileau, M. Rost and H. Zieschang.
 Hence such a torus knot has a $(2,0)$-splitting
which is not meridionally stabilized.
(This $(2,0)$-splitting is derived from the unknotting tunnel
such that it can be isotoped into the torus
on which the torus knot lies.)

Let $W$ be a handlebody of genus two,
and $K$ a core in $W$.
A graph $\gamma$ embedded in $\text{int}\,W$
is called a {\it spine\/} of $(W, K)$
if $\gamma \cap K = \bdd \gamma$
and $W$ collapses to $K \cup \gamma$.
($\bdd \gamma$ denotes the union of the vertices of valency one in $\gamma$.)
An essential loop $l$ in the boundary of a handlebody $W$
is called a {\it meridian\/}
if it bounds a disk in $W$.
A loop $l'$ in the boundary of a handlebody $W'$
is called a {\it longitude\/}
if there is a meridian loop of $W'$
such that it intersects $l'$ transversely in a single point.

We say that a $(1,1)$-splitting
$(M, K) = (V_1, t_1) \cup_{H_1} (V_2, t_2)$
admits a {\it satellite diagram\/}
if there is an essential simple loop $l$ on the torus $H_1$
such that the arcs $t_1$ and $t_2$ have canceling disks
which are disjoint from $l$.
We call $l$ the {\it slope\/} of the satellite diagram.
We say that the slope of the satellite diagram is
{\it meridional\/} (resp. {\it longitudinal\/})
if it is meridional (resp. longitudinal)
on $\partial V_1$ or $\partial V_2$.

\begin{theorem}\label{thm:main}
Let $K$ be a knot in the $3$-sphere $S^3$.
Suppose that there are given
a $(1,1)$-splitting
$(S^3, K) = (V_1, t_1) \cup_{H_1} (V_2, t_2)$
and a $(2,0)$-splitting
$(S^3, K) = (W_1, K) \cup_{H_2} (W_2, \emptyset)$.
Then at least one of the following conditions holds.
\begin{enumerate}
\renewcommand{\labelenumi}{(\theenumi)}
\item
The $(2,0)$-splitting $H_2$ is meridionally stabilized.
\item
There is an arc $\gamma$
which forms a spine of $(W_1, K)$
and is isotopic into the torus $H_1$.
Moreover, we can take $\gamma$
so that
there is a canceling disk $C_i$ of the arc $t_i$ in $(V_i, t_i)$
with $\bdd C_i \cap \gamma = \partial \gamma = \partial t_i$
for $i=1$ or $2$.
\item
There is an essential separating disk $D_2$ in $W_2$,
and an arc $\alpha$ in $W_1$
such that $\alpha \cap K$ is one of the endpoints $\bdd \alpha$,
and $\alpha \cap \bdd W_1$ is the other endpoint $p$ of $\alpha$
and that $D_2$ cuts off a solid tours $U_1$ from $W_2$ with
$p \in \bdd U_1$
and with the torus $\partial N(U_1 \cup \alpha)$
isotopic to
$H_1$ in $(M, K)$.
$($See Figure \ref{fig:Case3}.$)$
\item
The $(1,1)$-splitting $H_1$ admits a satellite diagram
of a longitudinal slope.
\end{enumerate}
\end{theorem}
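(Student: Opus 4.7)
The plan is to study intersections of the $(1,1)$-splitting torus $H_1$ with a spine of the $(2,0)$-splitting, or equivalently with a complete meridian system of $W_2$. On the $(1,1)$ side, fix canceling disks $C_1 \subset V_1$ and $C_2 \subset V_2$ for the trivial arcs $t_1, t_2$. On the $(2,0)$ side, fix a complete meridian system $\{E_1, E_2\}$ of $W_2$, i.e., two disjoint disks whose complement in $W_2$ is a $3$-ball. Put all surfaces and disks in general position and isotope so as to minimize, in lexicographic order, $|H_1 \cap (E_1 \cup E_2)|$ and then $|(C_1 \cup C_2) \cap (E_1 \cup E_2)|$. The four conclusions should correspond to qualitatively distinct configurations that survive this minimization.

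\textbf{First reduction.} Standard innermost-disk surgery on $H_1 \cap (E_1 \cup E_2)$ removes loops that are trivial on $E_j$. Next consider outermost arcs of $(C_1 \cup C_2) \cap (E_1 \cup E_2)$. An outermost arc on $E_j$ cuts off a subdisk $\delta \subset E_j$ whose boundary meets some $C_i$ in a single arc. If $\delta$ meets $K$ transversely in exactly one point, then $\delta$, together with the meridian of $N(K) \subset W_1$ dual to $\delta \cap K$, forms a pair of disks witnessing conclusion (1): $H_2$ is meridionally stabilized. If $\delta$ is disjoint from $K$, it provides a $\partial$-compression of $C_i$ that either strictly reduces the intersection count (contradicting minimality) or is isotopically redundant, in which case its position is recorded for later use.

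\textbf{Second reduction.} Analyze the curves of $(E_1 \cup E_2) \cap H_1$ on the twice-punctured torus $H_1 \setminus K$. If some component $l$ is essential on $H_1$, then by the minimality of Step~1 the canceling disks $C_1, C_2$ may be arranged disjoint from $l$, so $l$ is the slope of a satellite diagram. If $l$ is longitudinal on some $\partial V_i$, conclusion (4) holds. If $l$ is meridional, then $l$ bounds a disk in some $V_i$ and an innermost subdisk of $E_j$ in $W_2$; the $2$-sphere obtained by tubing these together bounds a ball on one side, and reading off the position of $K$ relative to this sphere and to the pieces of $W_1 \setminus H_1$ produces the separating disk $D_2$, the solid torus $U_1$, and the arc $\alpha$ of conclusion (3). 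If instead every component of $(E_1 \cup E_2) \cap H_1$ is inessential on $H_1$, then $H_1$ can be isotoped off $E_1 \cup E_2$ entirely; incompressibility of $H_1 \setminus K$ rules out the possibility that $H_1$ is engulfed by the ball $W_2 \setminus (E_1 \cup E_2)$, so $H_1$ lies in $W_1$. An arc spine $\gamma$ of $(W_1,K)$ is then $\partial$-parallel into the component of $W_1 \setminus H_1$ containing it and can be pushed onto $H_1$, giving conclusion (2); the canceling-disk refinement in (2) follows from the disjointness of $C_1, C_2$ preserved throughout the isotopies.

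\textbf{Main obstacle.} The delicate point is bridging Step~1 and Step~2 when outermost subdisks meet $K$ but do not immediately yield a meridional stabilization: one needs a combinatorial analysis of the intersection pattern carried jointly on $C_1 \cup C_2$ and on $E_1 \cup E_2$ to force either conclusion (1) or an essential loop on $H_1$, and to rule out hybrid configurations that mimic several conclusions at once. A secondary subtlety lies in conclusion (3): showing $\partial N(U_1 \cup \alpha)$ is isotopic in $(M, K)$ to the \emph{given} $H_1$, and not merely to an abstract torus of the same slope, requires careful tracking of each compression and isotopy so that the identification of $H_1$ through the argument is not lost.
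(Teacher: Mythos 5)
Your route (intersecting $H_1$ with a complete meridian system of $W_2$ and with fixed canceling disks, then minimizing lexicographically) is genuinely different from the paper's, which first isotopes $H_1$ and $H_2$ to meet in $K$-essential loops \`a la Rubinstein--Scharlemann/Kobayashi--Saeki, inducts on the number of intersection loops (Theorem \ref{thm:general}), and then converts the outcomes ``$H_1$ weakly $K$-reducible'', ``torus knot'', ``satellite knot'' into conclusions (1)--(4) by quoting the classifications of unknotting tunnels of $2$-bridge, torus and satellite knots (\cite{K1}, \cite{BRZ}, \cite{MS}) and primality (\cite{N}, \cite{S}). But as written your proposal has several genuine gaps. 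First, the very first reduction is incorrect: a loop of $H_1\cap E_j$ that is innermost on $E_j$ need not bound a disk in $H_1-K$, so it cannot simply be ``removed''; instead the innermost subdisk of $E_j$ is a $t_i$-compressing disk of $H_1$, i.e., it puts you into the case that the $(1,1)$-splitting is weakly $K$-reducible. Your proposal never treats that case, and it cannot be treated combinatorially: one must invoke Proposition \ref{lem:11weak} to identify $K$ as trivial or $2$-bridge and then the external classification of its unknotting tunnels to land in conclusion (1). The same omission recurs for the torus-knot and satellite-knot outcomes, which in the paper are absorbed only by citing \cite{BRZ} and \cite{MS}; nothing in your minimization scheme produces conclusions (1)--(4) directly for, say, an arbitrary $(2,0)$-splitting of a satellite knot.

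Second, the derivations of conclusions (2) and (3) are assertions rather than arguments. That ``an arc spine $\gamma$ of $(W_1,K)$ is $\partial$-parallel into the component of $W_1\setminus H_1$ containing it and can be pushed onto $H_1$'' is exactly the statement to be proved; in the paper it emerges from the $|H_1\cap H_2|=1$ analysis (Lemma \ref{lem:Qcompressible}), where one $\partial$-compresses the once-punctured torus $H_1\cap W_2$, shows the resulting annuli are boundary-parallel or have longitudinal boundary, and only then constructs $\gamma$ from a core of such an annulus together with arcs on the disk $Q$. Likewise ``reading off the position of $K$ relative to this sphere \dots produces $D_2$, $U_1$ and $\alpha$'' is not an argument; the configuration of conclusion (3) arises in the paper only in the $|H_1\cap H_2|=2$ case, which is difficult enough that it is deferred to the sequel \cite{GH}. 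Finally, you yourself flag that the bridge between your Step 1 and Step 2 --- forcing either a meridional stabilization or an essential intersection loop, and excluding hybrid configurations --- is missing; that bridge is where essentially all of the content of Sections 5--9 of the paper lives, so the proposal as it stands is an outline of a strategy rather than a proof.
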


\begin{figure}[htbp]
\centering
\includegraphics[width=.6\textwidth]{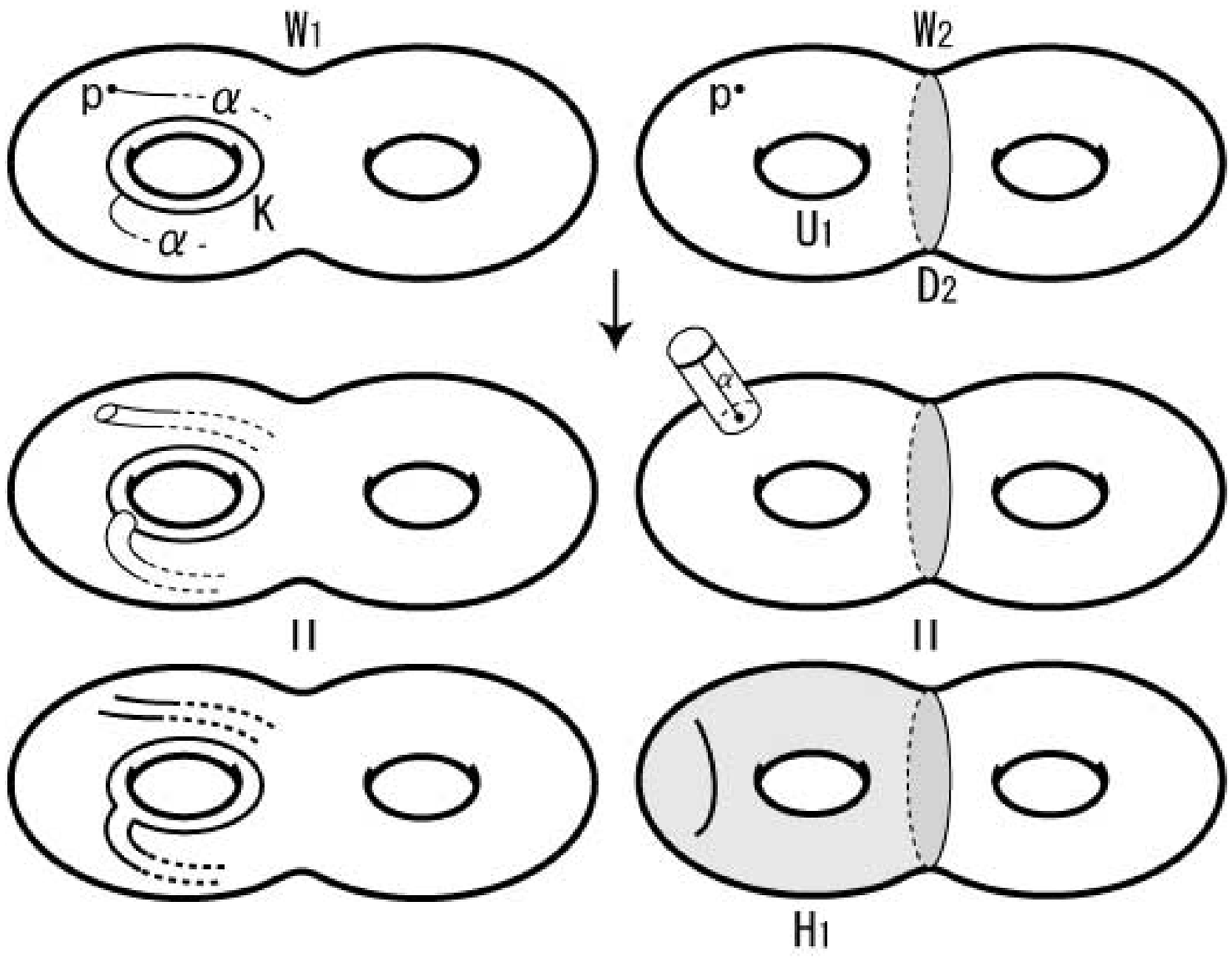}
\caption{}
\label{fig:Case3}
\end{figure}

Note that the conclusion (2) contains torus knots, and
$K' = \gamma \cup t_i$ forms a torus knot
and the complementary arc $t_j = \text{cl}\,(K - t_i)$
forms an unknotting tunnel for $K'$
when $t_i$ is isotoped into the $(1,1)$-splitting torus $H_1$
along the canceling disk $C_i$.
But we cannot apply the classification of unknotting tunnel
of torus knot given in \cite{BRZ}
because we cannot slide an endpoint of the arc $t_j$
beyond the other endpoint of $t_j$.
H.J. Song 
informed us that there are concrete examples
of hyperbolic knots and their unknotting tunnels satisfying the
conclusion (2). See Section \ref{sec:example}.
The conclusion (3) resembles the `dual tunnel' case as described 
in (\cite{MS},(1,1)), 
but the arc $\alpha$ may be knotted or linked with $K$. 
A knot in the conclusion (4) can be obtained from 
a component of a 2-bridge link by a $1/n$-Dehn surgery 
on the other component, that is, twisting. 

It is shown in Theorem III in \cite{Hy2}
that a $1$-genus $1$-bridge splitting of a satellite knot
has a satellite diagram
of a non-meridional and non-longitudinal slope.
When the slope $l$ is longitudinal on $\bdd V_1$,
the boundary torus of the regular neighborhood
of $(H_1 -N(l)) \cup C_2$ also gives a $(1,1)$-splitting,
where $C_2$ is a canceling disk of $t_2$
with $C_2 \cap l = \emptyset$.


\medskip

\noindent {\bf Question.}
(1) Is there an example which realizes the conclusion (3) ?\\
(2) How does an unknotting tunnel of a knot in the conclusion (4) behave ?

\medskip

We note that every unknotting tunnel of a tunnel number one knot
in $S^3$ may be slid and isotoped to lie entirely in its
minimal bridge sphere \cite{GST}.

In the rest of this section,
we recall the main machinery
and give a more precise statement of the above result
for knots in the $3$-sphere and lens spaces.

In \cite{RS},
H. Rubinstein and M. Scharlemann showed
that two Heegaard splitting surfaces
of a closed connected orientable $3$-manifold
can be isotoped
so that they intersect each other
in non-empty collection of essential loops
if they are not weakly reducible.
T. Kobayashi and O. Saeki studied in \cite{KS}
a variation of it, 
where $3$-manifolds contain links.
We recall one of their results.

Let $X$ be a compact orientable $3$-manifold,
and $T$ a compact $1$-manifold properly embedded in $X$.
For $i=1$ and $2$,
let $F_i$ be either a $2$-submanifold of $\bdd X$
or a compact orientable $2$-manifold
which is properly embedded in $X$ and is transverse to $T$.
Suppose that $T \cap \bdd F_i = \emptyset$ for $i=1$ and $2$.
$F_1$ is said to be
{\it $T$-compressible\/} in $(X, T)$
if there is a disk $D_1$ embedded in $X$
such that $D_1 \cap F_1 = \bdd D_1$,
that $D_1$ is disjoint from $T$
and that $\bdd D_1$ does not bound a disk in $F_1-T$.
We call such a disk a {\it $T$-compressing disk}.
$F_2$ is said to be
{\it meridionally compressible\/} in $(X, T)$
if there is a disk $D_2$ embedded in $X$
such that $D_2 \cap F_2 = \bdd D_2$,
that $D_2$ intersects $T$ transversely in a single point
and that $\bdd D_2$ does not bound a disk
which intersects $T$ in a single point in $F_2$.
 We call such a disk a {\it meridionally compressing disk}.

A $(1,1)$-splitting $(M, K)=(V_1, t_1) \cup_H (V_2, t_2)$
is called {\it weakly $K$-reducible\/}
if there is a $t_i$-compressing or meridionally compressing disk $D_i$
of $H = \bdd V_i$ in $(V_i, t_i)$ for $i=1$ and $2$
such that $\bdd D_1 \cap \bdd D_2 = \emptyset$.
 A $(1,1)$-splitting is called {\it strongly $K$-irreducible\/}
if it is not weakly $K$-reducible.
$(1,1)$-knots which admit a weakly $K$-reducible $(1,1)$-splitting
are characterized in Lemma 3.2 in \cite{Hy3}.
We recall it in Proposition \ref{lem:11weak} in Section 4.

A $(2,0)$-splitting $(M, K) = (W_1, K) \cup_{H_2} (W_2, \emptyset)$
is called {\it weakly $K$-reducible\/}
if there is a $K$-compressing or meridionally compressing disk $D_1$
of $H_2 = \bdd W_1$ in $(W_1, K)$
and a compressing disk $D_2$ of $H_2 = \bdd W_2$ in $W_2$
such that $\bdd D_1 \cap \bdd D_2 = \emptyset$.
$(2,0)$-knots which admit a weakly $K$-reducible $(2,0)$-splitting
are characterized in Proposition 2.14 in \cite{GHY}.
We recall it in Proposition \ref{prop:20weak} in Section 4.
There we find
that a meridionally stabilized $(2,0)$-splitting
is weakly $K$-reducible.

Suppose that a compact orientable $2$-manifold $F$
is properly embedded in $X$
so that it is transverse to $T$
and $\bdd F$ is disjoint from $T$.
 A loop in $F-T$ is called {\it $T$-inessential\/}
if either it bounds a disk in $F-T$,
or it bounds a disk $D$ in $F$
such that $D$ intersects $T$ in a single point.
 Otherwise, it is {\it $T$-essential}.

\begin{theorem}[T.Kobayashi and O.Saeki \cite{KS}]\label{thm:KobayashiSaeki}
Suppose that
$L$ is a link in $M$ that has a $2$-fold branched covering
with branch set $L$.
 Let $(M,L)=(W_{i1}, t_{i1}) \cup_{H_i} (W_{i2}, t_{i2})$
be a $g_i$-genus $n_i$-bridge splitting for $i=1$ and $2$.
 Suppose that every component of $L$ intersects $H_i$ for $i=1$ and $2$,
and that these splittings are not weakly $L$-reducible.
 Then we can isotope $H_1$ or $H_2$ in $(M, L)$
so that they intersect each other
in non-empty collection of finite number of loops
which are $L$-essential both in $H_1$ and in $H_2$.
\end{theorem}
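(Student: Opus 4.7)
The plan is to lift the two splittings to the 2-fold branched cover $\pi : \tilde M \to M$ with branch set $L$ and then invoke the original Rubinstein--Scharlemann theorem in $\tilde M$. Let $\tau$ denote the covering involution. For each $i = 1, 2$, the splitting $(M, L) = (W_{i1}, t_{i1}) \cup_{H_i} (W_{i2}, t_{i2})$ pulls back to a decomposition $\tilde M = \tilde W_{i1} \cup_{\tilde H_i} \tilde W_{i2}$, where $\tilde W_{ij}$ is the branched cover of $W_{ij}$ over $t_{ij}$. Because every component of $L$ meets $H_i$, each $\tilde W_{ij}$ is connected; and because $t_{ij}$ is a trivial system of arcs, an Euler-characteristic computation shows $\tilde W_{ij}$ is a handlebody (of genus $2g_i + n_i - 1$). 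Hence each $\tilde H_i$ is a $\tau$-invariant Heegaard surface of $\tilde M$.

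Next I would verify that neither $\tilde H_1$ nor $\tilde H_2$ is weakly reducible in $\tilde M$. Starting from a hypothetical pair of disjoint compressing disks on opposite sides of $\tilde H_i$, an equivariant disk-exchange argument (together with the equivariant Dehn lemma) replaces them by a pair each of whose members is either disjoint from its $\tau$-translate or setwise invariant under $\tau$. Under $\pi$, a disk of the first type descends to a compressing disk of $H_i$ disjoint from $L$, while a disk of the second type descends to a meridionally compressing disk of $H_i$ meeting $L$ in a single point. Either way, the projections form a weakly $L$-reducing pair for $H_i$, contradicting the hypothesis. With non-weak-reducibility of each $\tilde H_i$ in hand, Rubinstein--Scharlemann applied to $\tilde H_1$ and $\tilde H_2$ in $\tilde M$, via $\tau$-equivariant sweepouts, produces $\tau$-invariant isotopic copies of the two surfaces whose transverse intersection is a nonempty collection of loops essential in both.

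Finally I project the configuration downstairs. The intersection loops are $\tau$-invariant as a collection, so their image under $\pi$ is a transverse intersection in $M$ of surfaces isotopic to $H_1$ and $H_2$, again a nonempty union of loops. A loop $\tilde c \subset \tilde H_i$ bounds a disk in $\tilde H_i$ if and only if its image $\pi(\tilde c) \subset H_i$ bounds a disk in $H_i$ meeting $L$ in at most one point, so essentiality upstairs coincides with $L$-essentiality downstairs, finishing the proof. The main obstacle, I expect, is the middle step: carrying out the equivariant disk-exchange so that the projected disks are honestly embedded and of the correct ($L$-compressing or meridionally compressing) type, and likewise arranging the Rubinstein--Scharlemann sweepouts $\tau$-equivariantly so that the descended intersection is genuinely transverse rather than merely immersed.
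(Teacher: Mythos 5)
The paper does not prove this statement: it is quoted, with attribution, from Kobayashi--Saeki \cite{KS}, and no argument for it appears anywhere in the text. So there is no in-paper proof to compare yours against. Your plan does correctly identify why the $2$-fold branched covering hypothesis is there --- the paper's own remarks (the ``technical condition'' discussion and the Question following the theorem) confirm that the branched double cover is the essential mechanism --- and the outline (lift the splittings to handlebody decompositions of $\tilde M$, transfer strong irreducibility, run Rubinstein--Scharlemann upstairs equivariantly, project back) is the right shape of argument.

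However, as a proof it has two real gaps, both of which you partially acknowledge. First, ``Rubinstein--Scharlemann via $\tau$-equivariant sweepouts'' is not a citation you can make: the Rubinstein--Scharlemann theorem is not stated or proved equivariantly, and arranging the two sweepouts to be $\tau$-equivariant while keeping the product map generic (so that the Cerf-theoretic analysis of the graphic goes through) is precisely the technical content of \cite{KS}, where the graphic is realized as the discriminant set of a stable map. This step is the theorem, not a lemma feeding into it. Second, the transfer of strong $L$-irreducibility to strong irreducibility of $\tilde H_i$ requires the equivariant loop theorem/Dehn lemma plus a disk-exchange that simultaneously (i) makes each compressing disk invariant or disjoint from its $\tau$-translate, (ii) keeps the two disks on opposite sides disjoint from each other, and (iii) guarantees that an invariant disk meets the fixed-point set in exactly one arc so that it descends to a meridionally compressing disk; this is true but needs to be proved, and as written it is only asserted. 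The final descent step is fine once transversality and $\tau$-invariance of the intersection are in hand: each intersection loop upstairs is invariant or swapped with a partner, so its image is embedded, and your dictionary between essential loops upstairs and $L$-essential loops downstairs is correct.
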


We do not recall the definition
of $g_i$-genus $n_i$-bridge splittings,
but $1$-genus $1$-bridge splittings are special cases of them.
The above theorem does not work for $(2,0)$-splittings
because the knot does not intersect the splitting surface.
But a similar argument to the proof of the above theorem
works for a pair of a $(1,1)$-splitting and a $(2,0)$-splitting.
However, the above theorem contains a technical condition
on existence of $2$-fold branched covering.
For example,
the projective space ${\mathbb R}P^3$,
which is homeomorphic to the lens space $L(2,1)$,
does not have a $2$-fold branched covering
with a core knot being a branch set, 
where the exterior of a core knot is homeomorphic to a solid torus.
Of course,
the $3$-sphere $S^3$ has a $2$-fold branched covering
with any knot being a branch set.

\medskip

\noindent{\bf Question.}
Can we reduce the condition that $M$ has a $2$-fold branched covering
with branch set $L$ in the above theorem ?

\medskip
In this paper, we begin with the situation
that a $(1,1)$-splitting and a $(2,0)$-splitting intersect each other
in non-empty collection of finite number of $K$-essential loops.
We will also use this condition
when we apply Proposition \ref{thm:koba}.
The authors expect
that this proposition, and hence the following theorems hold
without this technical condition.

We say that $K$ is a {\it torus knot\/}
if $K$ can be isotoped into a torus
which gives a genus one Heegaard splitting of $M$.
We call $K$ a {\it satellite knot\/}
if the exterior $E(K)=\text{cl}\,(M-N(K))$
contains an incompressible torus $T$ 
which is not parallel to $\bdd E(K)$.
The torus $T$ may not bound a solid torus in $M$.

\begin{theorem}\label{thm:general}
Let $M$ be the $3$-sphere or a lens space $($other than $S^2 \times S^1)$,
and $K$ a knot in $M$.
Let $(V_1, t_1) \cup_{H_1} (V_2, t_2)$ and  $(W_1, K) \cup_{H_2} (W_2, \emptyset)$
be a $(1,1)$-splitting and $(2,0)$-splitting of $(M,K)$.
Suppose that the splitting surfaces $H_1$ and $H_2$
intersect each other in $\ell$ loops
which are $K$-essential both in $H_1$ and in $H_2$.
\begin{enumerate}
\item[(1)] 
If $\ell \ge 4$, 
then at least one of the five conditions $(a), (b), (c), (d)$ and $(e)$ below holds.
\item[(2)] 
If $\ell=3$, 
then at least one of the six conditions $(a), (b), (c), (d), (e)$ and $(f)$ below holds.
\item[(3)] 
We assume that $M$ has a $2$-fold branched cover with branch set $K$.
\begin{enumerate}
\item[(3-1)] 
If $\ell=2$, 
then at least one of 
the six conditions $(a), (c), (d), (e), (f)$ and $(h)$ below holds.
\item[(3-2)] 
If $\ell=1$, 
then at least one of the conditions $(c), (d)$ and $(g)$ below holds.
\end{enumerate}
\end{enumerate}
\begin{enumerate}
\item[(a)]
 We can isotope $H_1$ and $H_2$ in $(M, K)$
so that they intersect each other
in non-empty collection of smaller number of loops
which are $K$-essential both in $H_1$ and in $H_2$.
\item[(b)]
 The $(1,1)$-splitting $H_1$ is weakly $K$-reducible.
\item[(c)]
 The $(2,0)$-splitting $H_2$ is weakly $K$-reducible.
\item[(d)]
 The knot $K$ is a torus knot.
\item[(e)]
 The knot $K$ is a satellite knot.
\item[(f)]
The $(1,1)$-splitting $H_1$ admits a satellite diagram 
of a longitudinal slope. 
\item[(g)]
 There is an arc $\gamma$
which forms a spine of $(W_1, K)$
and is isotopic into the torus $H_1$.
 Moreover, we can take $\gamma$
so that
there is a canceling disk $C_i$ of the arc $t_i$ in $(V_i, t_i)$
with $\bdd C_i \cap \gamma = \partial \gamma = \partial t_i$
for $i=1$ or $2$.
\item[(h)]
 There is an essential separating disk $D_2$ in $W_2$,
and an arc $\alpha$ in $W_1$
such that $\alpha \cap K$ is one of the endpoints $\bdd \alpha$,
and 
$\alpha \cap \partial W_1$ 
is the other endpoint, say $p$, of $\alpha$
and that $D_2$ cuts off a solid tours $U_1$ from $W_2$
with $p \in \bdd U_1$
and with the torus $\partial N(U_1 \cup \alpha)$
isotopic to the $(1,1)$-splitting torus $H_1$ in $(M, K)$.
$($See Figure \ref{fig:Case3}.$)$
\end{enumerate}
\end{theorem}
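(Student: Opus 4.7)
The plan is to analyze the intersection $H_1 \cap H_2$ piece by piece, using the hypothesis that the $\ell$ loops of $H_1 \cap H_2$ are $K$-essential on both surfaces. Every essential loop on the torus $H_1$ is non-separating and all such loops are mutually parallel, so the loops of $H_1 \cap H_2$ divide $H_1$ into $\ell$ parallel annuli that alternately lie in $W_1$ and in $W_2$. On the genus two surface $H_2$, the loops are $K$-essential and cut $H_2$ into planar or once-punctured pieces, each sitting inside $V_1$ or inside $V_2$.

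First I would examine each component $P$ of $H_2 \cap V_i$ and ask whether it is \com{} or meridionally compressible in $(V_i, t_i)$, and symmetrically each component $Q$ of $H_1 \cap W_j$ for compressibility in $W_j$ or for $K$-compressibility/meridional compressibility in $(W_1, K)$. Standard innermost-disk and outermost-arc arguments should show that when two compressing disks on opposite sides of $H_1$ (or of $H_2$) can be made disjoint on the splitting surface, we obtain the weak reducibility conclusions (b) or (c); when instead an intersection loop admits an innermost $K$-inessential companion disk, an isotopy removes that loop and we fall into (a). The remaining possibility is that every piece is $K$-incompressible on both sides, which sharply restricts its topological type.

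The case split on $\ell$ reflects how much room that dichotomy leaves. When $\ell \ge 4$, there are enough annular pieces on $H_1$ and enough pieces on $H_2$ that generic compressibility arguments deliver (a), (b) or (c); any residual configuration produces an essential annulus or torus pointing to a torus-knot structure (d) or a satellite-knot structure (e), detected by the incompressible tori one can assemble from an incompressible piece and parts of $H_1$. As $\ell$ decreases, the pieces are forced into a shorter and shorter list of small surfaces (annuli, pairs of pants, once-punctured annuli, disks meeting $K$ in at most one point). Conclusion (f) appears at $\ell=3$ because a surviving essential annular piece of $H_2 \cap V_i$ whose core lies on a longitude on $\partial V_i$ provides canceling disks disjoint from a longitudinal slope. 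Conclusions (g) and (h) appear only at $\ell \le 2$: in these ranges the pieces of $H_1$ in $W_1$ and of $H_2$ in $V_i$ are so constrained that they reassemble, respectively, either a spine arc $\gamma \subset W_1$ isotopic into $H_1$ together with a canceling disk $C_i$ of $t_i$ meeting $\gamma$ only at the knot endpoints, or a separating disk $D_2 \subset W_2$ and a dual arc $\alpha \subset W_1$ whose regular neighborhood $\partial N(U_1 \cup \alpha)$ recovers $H_1$ up to isotopy. The $2$-fold branched cover hypothesis at $\ell \le 2$ is invoked precisely via Proposition \ref{thm:koba} to rule out pathological intermediate intersection patterns and to force the exhibited structural alternatives.

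The hardest step will be the fine-grained enumeration in the small-$\ell$ cases leading to (g) and (h), because both require more than the bare existence of a compressing disk: in (g) one must locate an arc that is simultaneously a spine of $(W_1, K)$, is isotopic into $H_1$, and whose endpoints lie compatibly on a canceling disk $C_i$ of $t_i$; in (h) one must pair an essential separating disk $D_2 \subset W_2$ with an arc $\alpha \subset W_1$ so that $\partial N(U_1 \cup \alpha)$ is \emph{isotopic} to $H_1$ in $(M,K)$, not merely homeomorphic to a $(1,1)$-splitting torus. Both identifications depend on careful bookkeeping of which annular piece of $H_1$ lies in which handlebody and how the canceling disks of $t_1$ and $t_2$ sit with respect to the pieces of $H_2$ they pierce. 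Once the general machinery is set up, assumptions of strong $K$-irreducibility (outside (b) and (c)) and of non-satellite/non-torus character (outside (d) and (e)) trim the pieces enough that the identification is forced, turning the remaining work into a long but essentially mechanical case analysis.
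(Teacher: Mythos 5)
Your setup contains a concrete error that undermines the case analysis for odd $\ell$. You assert that the loops of $H_1\cap H_2$, being $K$-essential in $H_1$, are essential and mutually parallel on the torus $H_1$ and hence cut it into $\ell$ parallel annuli. But $K$-essential is weaker than essential: a loop bounding a disk in $H_1$ that meets $K$ in exactly two points is $K$-essential yet inessential in $H_1$. Since $H_2$ separates $M$, the number of loops of $H_1\cap H_2$ that are essential in $H_1$ is even, so for $\ell$ odd at least one loop must be of this inessential type. For $\ell=1$ the unique loop bounds a disk $Q$ meeting $K$ twice and a once-punctured torus $H'_1$ — the entire analysis there (and the source of conclusion (g), via $K$-compressing $Q$, splitting $W_1$ into two solid tori, and extracting a core arc lying in $H_1$) lives on this configuration, which your annulus decomposition excludes. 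Likewise for $\ell=3$ one must treat the subcases of one or three inessential loops separately; conclusion (f) arises there not from ``an annular piece whose core is a longitude'' but from exhibiting a satellite diagram (two disjoint canceling disks avoiding an essential loop of $H_1$) and then invoking the trichotomy of Lemma \ref{lem:satellitediagram} / Proposition \ref{prop:satellitediagram}.

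The second gap is that your plan has no mechanism actually producing conclusions (d) and (e), nor the reduction that organizes case (1). The paper first disposes of the situation where $H_1\cap H_2$ contains three mutually parallel loops on $H_2$ (automatic once $\ell\ge 7$, since $H_2$ carries at most three disjoint pairwise non-parallel essential loops); that is where (d) and (e) enter, again through the satellite-diagram machinery rather than through ``assembling incompressible tori from pieces,'' and the trivial-knot outcome of that machinery must be rerouted into (c) via Proposition \ref{prop:KReducible}. What remains is a finite list $\ell\le 6$ handled by boundary-compression bookkeeping in the spirit you describe, and there your compress-or-isotope dichotomy is essentially the right engine for (a), (b), (c). But for (g) and (h) you acknowledge only that the identification is ``forced'' without supplying the constructions, and the branched-cover hypothesis is needed not to ``rule out pathological patterns'' in general but at two specific junctures (Lemmas \ref{lem:Qcompressible} and \ref{lem:bothincompressible}) where $H_1$ has been pushed entirely inside one handlebody and Proposition \ref{thm:koba} converts a disjoint compressing disk into weak $K$-reducibility. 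As written, the proposal would not close the odd-$\ell$ cases and does not reach (d), (e), (f), or (g).
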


If the case $(b)$ or $(c)$ occurs, 
the knot is in a well-studied class
or $H_2$ becomes meridionally stabilized
by Proposition \ref{lem:11weak}, \ref{prop:KReducible} 
and \ref{prop:20weak}.
(1) in this theorem will be proved in Sections 5, 7 and 8.
Section 6 is devoted to prove the case (2).
(3-2) will be proved in Section 9
which can be read without reading 
Sections 5 through 8.
The conclusion (h) corresponding to (3) 
in Theorem \ref{thm:main}
appears in only (3-1), 
which will be studied in the sequel to
this paper \cite{GH}.  
See also Theorem 1.1 in \cite{GH}.


Theorem \ref{thm:main} is shown via 
Theorem \ref{thm:general}, 
Propositions \ref{lem:11weak} and \ref{prop:20weak}
together with the results in \cite{BRZ},  \cite{K1}. \cite{MS}, 
\cite{N}, and \cite{S}.  
A $(2,0)$-splitting for a torus knot in $S^3$
satisfies conclusion (1) or (2) of Theorem \ref{thm:main},
and conclusion (1) holds
for that for a satellite knot or a $2$-bridge knot.
A knot with a $(1,1)$-splitting in $S^3$ is prime by \cite{N} and \cite{S}.

We should note that almost all parts of this paper were written in 2001. 
We were motivated to finish writing by the works of 
S. Cho-D. McCullough \cite{CM1,CM2}, K. Ishihara \cite{I}, 
J. Johnson \cite{johnson}, 
Y. Koda \cite{koda}, and M. Scharlemann-M. Tomova \cite{ST}.

The authors would like to thank
Professor Tsuyoshi Kobayashi,
Professor Kanji Morimoto and
Professor Makoto Sakuma
for helpful comments.
The last version of this paper has been done 
by the virtue of Dr. Kai Ishihara and Dr. Yuya Koda.  
The authors thank them for valuable comments and advices. 
They also thank the referee for several comments.


\section{Surfaces in a solid torus with a trivial arc}

Throughout this section, 
we study properties of surfaces
in a solid torus $V$ 
with a trivial arc $t$ properly embedded in $V$.
Standard cut and paste arguments show the next two lemmas.
We omit the proofs.

\begin{lemma}\label{lem:solid1}
 If $D$ is a $t$-compressing disk of $\bdd V$,
then either
\begin{enumerate}
\renewcommand{\labelenumi}{(\theenumi)}
\item
$D$ is a meridian disk of $V$ and is disjoint from $t$, or
\item
$D$ is a $\bdd$-parallel disk
which separates $V$ into a solid torus
and a 3-ball containing $t$ $($see Figure \ref{fig:t-comp}\/$)$.
\end{enumerate}
\end{lemma}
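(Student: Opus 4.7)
The plan is to do a case analysis on whether $\partial D$ is essential or inessential in the torus $\partial V$. Because $D$ lies in the solid torus $V$, its boundary is either a meridian of $V$ or it bounds a disk on $\partial V$; there is no third possibility for a simple closed curve on a torus that also bounds a disk in the solid torus. So the proof breaks into two cases, one giving conclusion (1) and the other giving conclusion (2).

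In the first case, $\partial D$ is essential on $\partial V$, hence a meridian of $V$, so $D$ itself is a meridian disk. Since $D$ is a $t$-compressing disk, it is disjoint from $t$ by definition, and conclusion (1) follows immediately.

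In the second case, $\partial D$ is inessential on $\partial V$ and bounds a disk $E \subset \partial V$. The $t$-compressibility hypothesis says $\partial D$ does not bound a disk in $\partial V - t$, so $E$ must contain at least one endpoint of $t$. I would then form the 2-sphere $D \cup E$ and use irreducibility of the solid torus $V$ (after a small pushoff to get a sphere in $\operatorname{int} V$) to conclude that it bounds a $3$-ball $B$ in $V$. The remaining piece $\operatorname{cl}(V - B)$ is obtained from the solid torus by removing a ball that meets $\partial V$ in a disk, so it is again a solid torus. This is exactly the structure described in conclusion (2), provided I can locate $t$ inside $B$.

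The step that needs a little care is verifying that $t \subset B$ (and consequently that both endpoints of $t$ lie in $E$). For this I would argue as follows. Since $t \cap \partial V = \partial t$ and $D \cap t = \emptyset$, the arc $t$ meets $\partial B = D \cup E$ only at points of $E \cap \partial t$. Near any endpoint of $t$ that lies in $E$, the arc $t$ points into $V$ off of $E$, hence into $B$. Because $t$ is connected and misses the separating surface $\partial B$ except possibly at its endpoints, $t$ lies entirely in $B$; in particular the other endpoint of $t$ is also in $E$. This completes case (2), and I expect this little endpoint/transversality bookkeeping to be the only mildly subtle point in the argument.
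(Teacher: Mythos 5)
Your proof is correct; the paper explicitly omits the proof of this lemma as a ``standard cut and paste argument,'' and your case analysis on whether $\partial D$ is essential in $\partial V$ (essential $\Rightarrow$ meridian disk; inessential $\Rightarrow$ the bounded disk $E\subset\partial V$ must meet $\partial t$, and $D\cup E$ bounds a ball by irreducibility) is precisely that standard argument. The endpoint bookkeeping showing $t\subset B$, hence that both points of $\partial t$ lie in $E$, is handled correctly.
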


\begin{figure}[htbp]
\centering
\includegraphics[width=.6\textwidth]{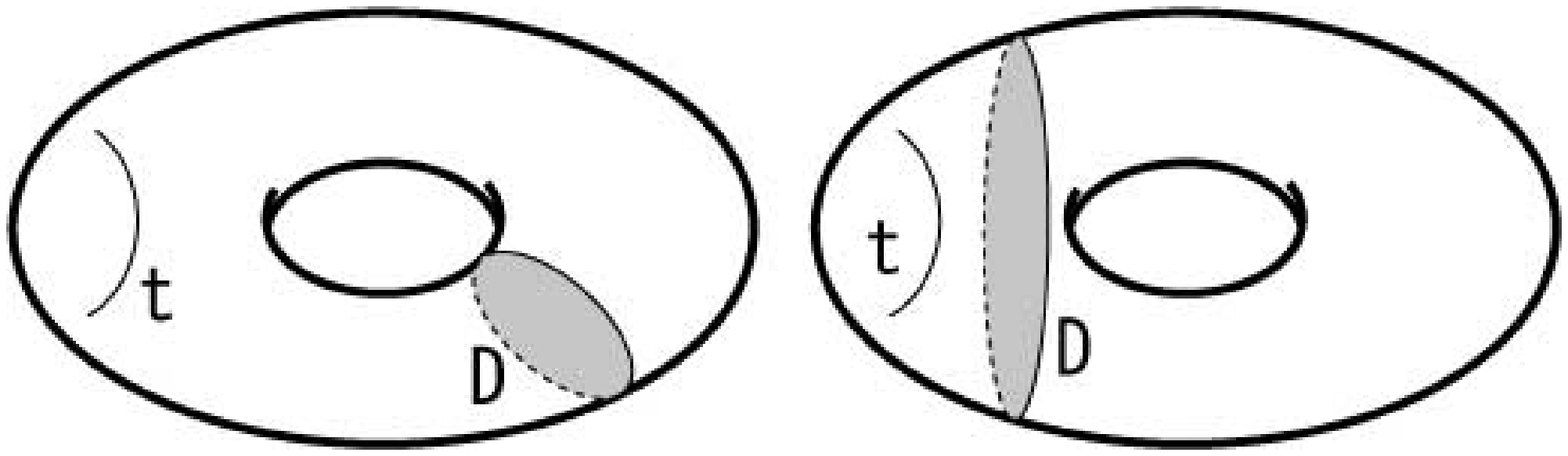}
\caption{}
\label{fig:t-comp}
\end{figure}

\begin{lemma} \label{lem:solid2}
 If $D$ is a meridionally compressing disk of $\bdd V$,
then $D$ is a meridian disk in $V$
and intersects $t$ transversely at a single point $($see Figure \ref{fig:meri}\/$)$.
\end{lemma}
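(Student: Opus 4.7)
The plan is to show first that $\bdd D$ is essential on the torus $\bdd V$, from which it will follow immediately that $D$ is a meridian disk of the solid torus $V$; the property that $D$ meets $t$ transversely in a single point is part of the definition of a meridionally compressing disk and requires no further work.

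To establish essentiality, I would argue by contradiction. Suppose $\bdd D$ bounds a disk $D' \subset \bdd V$. The meridionally compressing hypothesis forbids $\bdd D$ from bounding a disk in $\bdd V$ meeting $t$ in exactly one point, so $D'$ must contain either $0$ or $2$ of the endpoints $\bdd t$. Because the solid torus $V$ is irreducible, the $2$-sphere $D \cup D'$ bounds a $3$-ball $B$ in $V$, with $B \cap \bdd V = D'$.

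The key step is then a parity count on the compact $1$-manifold $t \cap B$. Its boundary points consist of the transverse intersections of the interior of $t$ with $\bdd B = D \cup D'$, together with the endpoints of $t$ lying in $B$. The first contribution is exactly one point (the single crossing on $D$, using $t \cap D' = \emptyset$), and the second is $0$ or $2$ according to which sub-case we are in. In both sub-cases the total is odd, contradicting the fact that a compact $1$-manifold has an even number of boundary points; hence $\bdd D$ is essential in $\bdd V$.

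Since $D$ is a properly embedded disk in the solid torus $V$, the loop $\bdd D$ is null-homotopic in $V$, and an essential null-homotopic loop on $\bdd V$ represents the meridian slope. Thus $D$ is a meridian disk of $V$, completing the proof. The only subtle point I anticipate is the bookkeeping in the parity count, in particular noting that an endpoint of $t$ on $\bdd B$ contributes to $\bdd(t \cap B)$ exactly when it lies on $D'$; apart from that the argument is routine.
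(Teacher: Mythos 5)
Your proof is correct, and it supplies exactly the kind of routine argument the paper has in mind: the authors omit the proofs of Lemmas \ref{lem:solid1} and \ref{lem:solid2} as ``standard cut and paste arguments,'' and your parity count on $\bdd(t\cap B)$ (odd in both sub-cases, since the definition of a meridionally compressing disk excludes the disk $D'$ containing exactly one endpoint of $t$) is a clean way to rule out $\bdd D$ being inessential. The remaining steps --- that an essential curve on $\bdd V$ bounding a disk in $V$ is a meridian, and that the single transverse intersection with $t$ is definitional --- are handled correctly.
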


%

\begin{figure}[htbp]
\centering
\includegraphics[width=.3\textwidth]{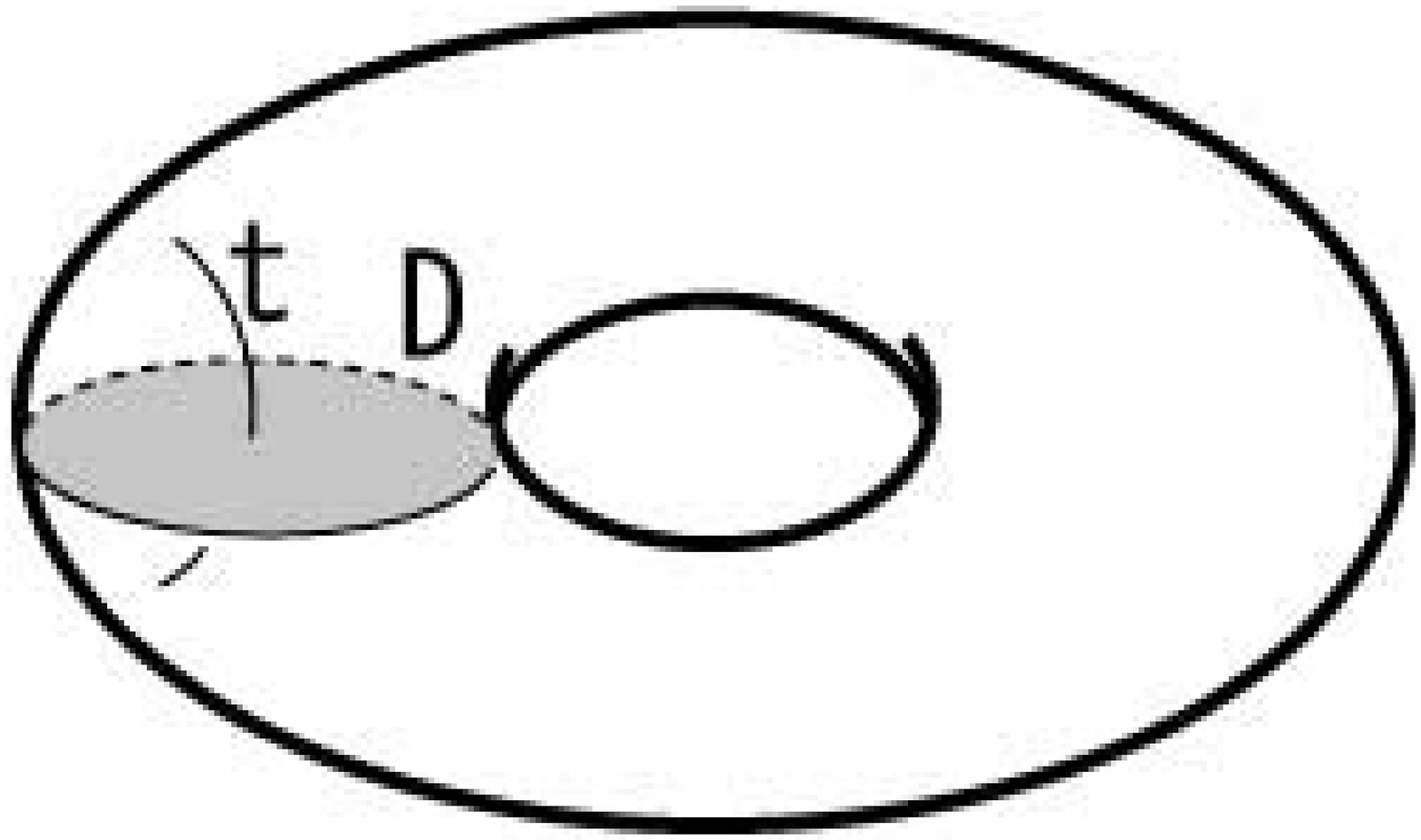}
\caption{}
\label{fig:meri}
\end{figure}

\begin{definition}($T$-$\bdd$-compressible)\label{def:bdd-comp}
 Let $X$ be an orientable $3$-manifold,
and $T$ a compact $1$-manifold properly embedded in $X$.
 Let $F$ be a compact orientable $2$-manifold
properly embedded in $X$.
 Suppose that $\bdd F$ is disjoint from $T$
and that $T$ is transverse to $F$.
 We say that $F$ is {\it $T$-$\bdd$-compressible\/} in $(X, T)$
if there is a disk $D$ embedded in $X$
satisfying all of the following conditions:
\begin{enumerate}
\renewcommand{\labelenumi}{(\theenumi)}
\item
$D$ is disjoint from $T$;
\item
$D \cap (F \cup \bdd X) = \bdd D$;
\item
$D \cap F$ is an essential arc properly embedded in $F-T$;
\item
$\bdd D \cap \bdd X$ is an essential arc
in the surface obtained from $\bdd X - T$
by cutting along $\bdd F$.
\end{enumerate}
 We call such a disk $D$ a {\it $T$-$\bdd$-compressing disk\/} of $F$.
 When there is not such a disk,
we say that $F$ is {\it $T$-$\bdd$-incompressible\/} in $(X, T)$.
\end{definition}

Remark.
 In the usual definition, the above condition (4) is omitted,
but we add this in this paper as in \cite{GHY} and \cite{Hy3}.
Note that this definition 
is equivalent to the usual one 
when $F$ is $T$-incompressible.

\begin{lemma}[Lemma 2.10 in \cite{Hy3}]\label{lem:H}
 Let $F$ be a compact orientable 2-manifold properly embedded in $V$
so that $F$ is transverse to $t$.
 Suppose that $F$ is $t$-incompressible and $t$-$\bdd$-incompressible
in $(V, t)$.
 Then $F$ is a union of finitely many surfaces of types $(1)\sim (6)$ below:
\begin{enumerate}
\renewcommand{\labelenumi}{(\theenumi)}
\item
a 2-sphere disjoint from $t$;
\item
a 2-sphere intersecting $t$ transversely in two points;
\item
a meridian disk of $V$ disjoint from $t$;
\item
a meridian disk of $V$ intersecting $t$ transversely in a single point;
\item
a peripheral disk disjoint from $t$;
\item
a peripheral disk intersecting $t$ transversely in a single point.
\end{enumerate}
\end{lemma}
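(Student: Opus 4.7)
The plan is to perform an innermost--outermost analysis of the intersection $F \cap C$ with a canceling disk $C$ of $t$, using $t$-incompressibility to eliminate loops of intersection and $t$-$\bdd$-incompressibility to eliminate arcs. Let $C$ be such a canceling disk, so $\bdd C = t \cup t'$ with $t' \subset \bdd V$. After a transverse isotopy I may assume that $|F \cap C|$ is minimal; since $t \subset \bdd C$, each arc of $F \cap C$ then has its endpoints on $F \cap t$ or $F \cap t'$.

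For loops, an innermost-disk argument: an innermost loop $\ell$ of $F \cap C$ bounds a subdisk $C_0 \subset C$ disjoint from $F$ and, since $t \subset \bdd C$, from $t$ as well. By $t$-incompressibility of $F$, $\ell$ also bounds a subdisk $F_0 \subset F$. The $2$-sphere $F_0 \cup C_0$ bounds a ball in $V$, and an isotopy of $F_0$ across this ball reduces $|F \cap C|$---contradicting minimality---unless $F_0$ is an entire spherical component of $F$; tracking the intersections of $F_0$ with $t$ then forces $F_0$ to meet $t$ in $0$ or $2$ points, giving type (1) or (2). For arcs, an outermost-arc argument: an outermost arc $\gamma$ of $F \cap C$ cuts off a subdisk $C_1 \subset C$ with $\bdd C_1 = \gamma \cup \gamma'$ and $\gamma' \subset \bdd C = t \cup t'$. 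If $\gamma' \subset t'$, then $C_1$ is a $t$-$\bdd$-compressing disk for $F$ in $(V,t)$, contradicting the hypothesis. If $\gamma' \subset t$ or if $\gamma'$ crosses one of the endpoints $\bdd t$, a slide along $C_1$ either reduces $|F \cap C|$ or identifies the subdisk of $F$ cut off by $\gamma$ as a disk component of $F$ meeting $t$ in exactly one point---type (4) or (6) according to whether its boundary on $\bdd V$ is a meridian of $V$ or is trivial on $\bdd V$.

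Once $F \cap C = \emptyset$, each remaining component of $F$ lies in one of the two components of $V \setminus N(C)$: a $3$-ball $B$ (containing the arc $t$) or a solid torus $V'$ (disjoint from $t$). In $V'$, the incompressibility and $\bdd$-incompressibility inherited from the hypotheses together with the classical classification of essential orientable surfaces in a solid torus give spheres, meridian disks, and $\bdd$-parallel disks---types (1), (3), and (5). In $B$ with the trivial arc $t$, the analogous classification gives spheres and peripheral disks meeting $t$ in $0$, $1$, or $2$ points---types (1), (2), (5), and (6). Combining, every component of $F$ is of one of the types (1)--(6).

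The main obstacle is the outermost-arc case in which $\gamma'$ crosses $\bdd t$: one must verify carefully that the slide along $C_1$ produces a single disk component of $F$ meeting $t$ in exactly one point and then determine its type (4) or (6) from the isotopy class of its boundary on $\bdd V$; closely related is the bookkeeping needed to ensure that the innermost-disk moves do not create new bad arc configurations at their endpoints.
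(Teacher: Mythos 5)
The paper itself gives no proof of this lemma---it is quoted verbatim from Lemma 2.10 of \cite{Hy3}---so I can only judge your argument on its own terms. Your overall strategy (innermost/outermost analysis of $F\cap C$ for a canceling disk $C$, using $t$-incompressibility against loops and $t$-$\partial$-incompressibility against arcs, then classifying what is left in the ball and in the complementary solid torus) is the standard and correct skeleton. But the argument as written breaks down precisely for the components of $F$ that meet $t$, which is where four of the six listed types come from. Since $t\subset\partial C$, every point of $F\cap t$ is an endpoint of an arc of $F\cap C$, so these arcs can never all be eliminated unless $F\cap t=\emptyset$. Your final step is therefore internally inconsistent: once $F\cap C=\emptyset$ you automatically have $F\cap t=\emptyset$, and no ``remaining component in $B$ meeting $t$ in $1$ or $2$ points'' can exist. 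Moreover, the move you propose for arcs with an endpoint on $F\cap t$---``a slide along $C_1$''---is an isotopy of $F$ that changes $|F\cap t|$. That is not a legitimate reduction here: the lemma classifies the given embedded $F$ together with its actual intersection pattern with $t$ (and it is applied in the paper in exactly that form, e.g.\ to conclude that an annulus disjoint from $t$ must be $t$-compressible or $t$-$\partial$-compressible), and the type of a component is not preserved by isotopies that change $|F\cap t|$; for instance a sphere meeting $t$ in four points can be isotoped off $t$, yet belongs to none of the listed types, and your procedure would never detect it.

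The missing idea is how the incompressibility hypotheses actually force the classification of a component meeting $t$. For an outermost arc $\gamma$ of $F\cap C$ with $\gamma'\subset t$, the correct step is to form the loop $\lambda=\partial N(\gamma;F)$ and observe that the frontier of $N(C_1\cup\gamma')$ on the side containing $C_1$ is a disk bounded by $\lambda$, disjoint from $t$, with interior disjoint from $F$; $t$-incompressibility then forces $\lambda$ to bound a disk in $F-t$, so the component containing $\gamma$ is that disk union $N(\gamma;F)$, i.e.\ a sphere meeting $t$ in exactly two points (type (2))---not a disk of type (4) or (6) as you assert for this case. The case where $\gamma'$ crosses an endpoint of $t$ is handled analogously with $t$-$\partial$-incompressibility and yields types (4) and (6). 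You gesture at the conclusion (``identifies the subdisk of $F$ cut off by $\gamma$ as a disk component'') but supply no mechanism: that $\gamma$ cuts anything off of $F$ is exactly what the hypotheses must be invoked to prove. Finally, when $\gamma'\subset t'$ you declare $C_1$ a $t$-$\partial$-compressing disk outright; conditions (3) and (4) of Definition \ref{def:bdd-comp} (essentiality of $C_1\cap F$ in $F-t$ and of $\gamma'$ in the cut-open boundary) must first be verified, and when either fails one needs the alternative isotopy argument rather than an immediate contradiction. The same caveat applies to your use of the classification of incompressible, $\partial$-incompressible surfaces in the complementary solid torus $V'$, whose boundary is not contained in $\partial V$.
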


\begin{lemma}\label{lem:annulus}
Let $A$ be an annulus properly embedded in $(V,t)$
such that a component of $\bdd A$ is essential in $\bdd V$
(ignoring $\bdd t$)
and the other component of $\bdd A$ bounds a disk $Q$ in $\bdd V$.
 Suppose that the annulus $A$ is disjoint from
$t$
and that
$Q$ contains the two endpoints of $t$.
 Then $A$ is $t$-compressible.
\end{lemma}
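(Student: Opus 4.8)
The plan is to show that the core curve of $A$ bounds a disk in $V$ that misses $t$, building this disk out of $A$, the subdisk $Q$, and the triviality of $t$. As a first step I would record that the essential boundary curve $\partial_0 A$ is a meridian of $V$: capping the other boundary $\partial_1 A$ with $Q$ gives the disk $A \cup Q$, and, keeping $\partial_0 A$ fixed while pushing the $Q$-part into $\mathrm{int}\,V$, we obtain a properly embedded disk $D^\ast$ with $\partial D^\ast = \partial_0 A$. Thus $\partial_0 A$ bounds a disk in $V$ and, being essential on $\partial V$, is a meridian; in particular both $\partial_1 A$ and the core of $A$ are isotopic in $A$ to this meridian, so neither bounds a disk in the annulus $A$. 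This clarifies exactly which curves are admissible boundaries for a $t$-compressing disk. (As a sanity check one can also note that, by Lemma \ref{lem:H}, an annulus cannot be simultaneously $t$-incompressible and $t$-$\partial$-incompressible, since no annulus occurs among its types $(1)$--$(6)$; so $A$ must admit some compression, and the work is to produce one of the correct kind disjoint from $t$.)

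Since $A \cap t = \emptyset$, we have $A - t = A$, so by the definition of $t$-compressibility it suffices to produce a disk $F$ with $F \cap A = \partial F$, with $\partial F$ a curve of $A$ not bounding a disk in $A$, and with $F$ disjoint from $t$. I would look for $F$ capping $\partial_1 A$ on the side away from $t$; then $\partial F = \partial_1 A$ does not bound a disk in $A$, and after a small push of $\partial F$ into $\mathrm{int}\,A$ to a parallel (hence essential) core curve, and a corresponding push of $F$ off $A$ in the normal direction, one obtains a genuine $t$-compressing disk of $A$. The naive candidates both fail: the disk $Q$ pushed inward, and the meridian disk $A \cup Q = D^\ast$ bounded by $\partial_0 A$, each meet $t$, precisely because the two endpoints of $t$ lie in $Q$ and so poke through any inward push of $Q$.

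The heart of the argument, and the step I expect to be the main obstacle, is therefore constructing the capping disk $F$ disjoint from $t$. To arrange this I would exploit that $t$ is trivial: choose a canceling disk $C$ of $t$ whose boundary arc $\mathrm{cl}(\partial C - t)$ lies in $Q$ and whose interior is disjoint from $A$, and use $C$ to isotope $t$ (without crossing $A$) into a small ball $B_0 \subset V$ meeting $\partial V$ in a subdisk $Q_0 \subset \mathrm{int}\,Q$, with $B_0 \cap A = \emptyset$. Once $t$ has been swept into $B_0$, the disk $D^\ast = A \cup Q$ meets $t$ only near $Q_0$, and since $B_0$ is a ball disjoint from $A$ with $Q_0$ interior to $Q = $ the disk bounded by $\partial_1 A$, one can cap $\partial_1 A$ by a disk $F$ routed around $B_0$, staying disjoint from $A$ and, because $t \subset B_0$, disjoint from $t$. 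This $F$ is the desired compressing disk. The technical crux is the existence of a canceling disk $C$ with boundary arc in $Q$ and interior disjoint from $A$, i.e. the claim that $t$ can be cleanly swept to the far side of $A$; I would establish it by an innermost-disk/outermost-arc analysis of $C \cap A$ together with the irreducibility of $V$, removing intersection circles and arcs of $C$ with $A$ and invoking Lemma \ref{lem:solid1} to control any meridional compressions of $\partial V$ that arise along the way.
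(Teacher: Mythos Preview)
Your approach is genuinely different from the paper's, but the crux step --- producing a canceling disk $C$ of $t$ with $C\cap A=\emptyset$ --- is not fully justified, and in the cases where it stalls you are forced back onto the paper's argument anyway.

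The innermost-circle part of your analysis is fine: a circle of $C\cap A$ innermost on $C$ bounds a subdisk of $C$ missing $t$, and if that circle is essential in $A$ you already have a $t$-compressing disk. The problem is the outermost-arc step. An outermost arc $\alpha$ of $C\cap A$ in $C$ cuts off a subdisk $\Delta$ of $C$ missing $t$, but there is no reason $\alpha$ must be inessential in $A$; indeed, since $A$ is an annulus, $\alpha$ may well be a spanning arc. In that case $\alpha$ bounds no subdisk of $A$, so there is no disk-swap available to reduce $|C\cap A|$. What you actually obtain is a $t$-$\partial$-compressing disk $\Delta$ for $A$, and $t$-$\partial$-compressing $A$ along $\Delta$ yields a meridian disk $D$ of $V$ disjoint from $t$ and (after a small isotopy) from $A$. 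From here the quickest way to finish is exactly what the paper does: choose the annulus $R\subset\partial V$ cobounded by $\partial D$ and $\partial_0 A$ on the side not containing $Q$, and take $D\cup R$ (pushed into $\mathrm{int}\,V$) as the $t$-compressing disk of $A$. Your vague plan to ``control any meridional compressions via Lemma~\ref{lem:solid1}'' does not substitute for this construction.

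By contrast, the paper's proof is two lines: Lemma~\ref{lem:H} says $A$ is $t$-compressible or $t$-$\partial$-compressible, and in the second case one writes down $D\cup R$ directly. Your route ultimately funnels into this same endgame, but only after a longer detour through the canceling disk; and the appealing geometric picture of ``sweeping $t$ behind $A$'' is not in fact established, since you have not shown $t$ is boundary-parallel in $V$ cut along $A$.
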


\begin{proof}
By Lemma \ref{lem:H},
$A$ is $t$-compressible
or $t$-$\bdd$-compressible in $(V, t)$.
In the latter case,
a $t$-$\bdd$-compression on $A$
yields
a meridian disk $D$ of
$V$.
$D$ is disjoint from $A$ after an adequate small isotopy.
 Then $\bdd D$ and 
a component of $\bdd A$ cobounds on $\partial V$ an annulus,
say $R$, which does not contain
$Q$.
Then $D\cup R$ 
gives a $t$-compressing disk of $A$.
\end{proof}

\begin{lemma}\label{lem:pants}
 Let $P$ be a disk with two holes properly embedded in $V$
so that $P$ is disjoint from $t$.
Suppose that every component of $\bdd P$ is $t$-essential in $\bdd V$.
Then, either
\begin{enumerate}
\renewcommand{\labelenumi}{(\theenumi)}
\item
 $P$ is $t$-compressible or meridionally compressible in $(V,t)$, or
\item
 there exists a $t$-$\bdd$-compressing disk $D$ of $P$ in $(V,t)$
such that the arc $\bdd D\cap P$ connects two distinct
components of $\bdd P$
and that every component of $\bdd P'$ is $t$-essential in $\bdd V$,
where $P'$ is the annulus
obtained from $P$ by $t$-$\bdd$-compression along $D$.
\end{enumerate}
 Moreover,
if each component of the loops $\bdd P$ is essential in $\bdd V$
$($ignoring the endpoints $\bdd t\/)$,
then $P$ is $t$-compressible or meridionally compressible in $(V, t)$.
\end{lemma}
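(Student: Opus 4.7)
The plan is to assume conclusion~(1) fails and exhibit the disk required in conclusion~(2). Assume throughout that $P$ is $t$-incompressible and meridionally incompressible in $(V,t)$. Since $P$ is connected with $\chi(P)=-1$, it is not a disjoint union of the six types enumerated in Lemma~\ref{lem:H} (all of which are disks or spheres), so Lemma~\ref{lem:H} forces $P$ to be $t$-$\partial$-compressible. Pick a $t$-$\partial$-compressing disk $D$, and set $\alpha=D\cap P$ and $\beta=\partial D\cap\partial V$. As $P$ is a planar pair of pants, the essential arc $\alpha$ either (A)~joins two distinct components of $\partial P$, whose cut yields an annulus $P'$, or (B)~has both endpoints on a single component $c_0$ of $\partial P$ and separates the other two components $c_1,c_2$ into different components of $P\setminus\alpha$, whose cut yields two annuli $A_1,A_2$ with $c_i\subset\partial A_i$.

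In Case~(A), $\partial$-compression along $D$ yields an annulus $P'$ with one boundary equal to the untouched component of $\partial P$ (hence $t$-essential) and the other equal to a new loop $\ell\subset\partial V-t$ obtained by band-summing the other two components of $\partial P$ along $\beta$. If $\ell$ is $t$-essential, then $D$ fulfills conclusion~(2). Otherwise $\ell$ bounds a disk $E\subset\partial V$ meeting $t$ in zero or one points. A standard innermost-disk argument on $E\cap\partial P$, using the $t$-essentiality of the other components of $\partial P$ to eliminate extra intersections, arranges $E$ to be disjoint from $\partial P\setminus\ell$; then $P'\cup E$, pushed slightly into $V$, becomes an embedded disk bounded by a $t$-essential loop of $\partial P$ and meeting $t$ in at most one point, that is, a $t$-compressing or meridionally compressing disk of $P$, contradicting our standing assumption.

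Case~(B) is the main obstacle. Applying Lemma~\ref{lem:H} to each annulus $A_i$ (with $\chi(A_i)=0$, so not in the list of pieces) shows $A_i$ is $t$-compressible or $t$-$\partial$-compressible. In the $t$-compressible case the compressing disk of $A_i$ has boundary a core loop of $A_i$; after isotopy within $A_i$ this loop lies in the sub-annulus $P_i\subset P$ between $c_i$ and $\alpha$ and is parallel to $c_i$ in $P$, and since a loop parallel to a boundary component of the pair of pants $P$ does not bound a disk in $P$, the disk also $t$-compresses $P$, contradicting our assumption. In the $t$-$\partial$-compressible case, the $t$-$\partial$-compressing arc of $A_i$ runs from $c_i$ to the new boundary $\ell_i$; after sliding its endpoint along $\ell_i$ onto the arc-of-$c_0$ portion of $\ell_i$ and isotoping the arc off the band of $D$ into $P_i\subset P$, we obtain (after an outermost-arc cleanup for extra $P$-intersections) a $t$-$\partial$-compressing disk of $P$ whose arc joins $c_i$ to $c_0$, reducing us to Case~(A).

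For the \emph{moreover} statement, assume all components of $\partial P$ are essential on the torus $\partial V$; then they are mutually parallel of a common slope $s$. If $s$ is the meridian slope of $V$, a meridian disk of $V$ disjoint from $t$ with boundary parallel to a component of $\partial P$ can be produced and, after an innermost-disk cleanup of its intersection with $P$, yields a $t$-compressing disk of $P$. If $s$ is not a meridian, a homological computation on $\partial V$ shows that in Case~(A) the new loop $\ell$ represents $0$ or $\pm 2[s]$ in $H_1(\partial V)$; combined with the fact that the two boundary loops of the annulus $P'\subset V$ represent equal classes in $H_1(V)=\mathbb{Z}$, both possibilities force $[s]=0$ in $H_1(V)$, i.e.\ $s$ is a meridian, a contradiction. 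Hence no Case~(A) $t$-$\partial$-compressing disk of $P$ exists, so every $t$-$\partial$-compressing disk of $P$ falls into Case~(B), which by the preceding paragraph makes $P$ $t$-compressible. In either subcase conclusion~(1) holds. The principal technical difficulty throughout is the innermost-disk and outermost-arc surgery needed to turn combinatorial relations among $\partial P$, $D$, and loops on $\partial V$ into properly embedded compressing or meridionally compressing disks of $P$.
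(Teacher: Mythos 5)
Your proof of the main dichotomy is essentially correct, and while it uses the same cut-and-paste toolkit as the paper, it is organized differently: the paper divides into five cases according to which component of $\bdd V-\bdd P$ contains the arc $\bdd D\cap\bdd V$, whereas you divide according to the arc $\bdd D\cap P$ inside the pair of pants and then test whether the band-sum loop $\ell$ is $t$-essential. Your Case~(A) observation --- that when $\ell$ is $t$-inessential the disk it bounds on $\bdd V$ caps off the compressed annulus $P'$ to a $t$-compressing or meridionally compressing disk bounded by the untouched component of $\bdd P$ --- cleanly unifies several of the paper's cases at once; your Case~(B) reduction (boundary-compress the two annuli and push the resulting disk off the band to get a Case~(A) disk) is the same manoeuvre as the paper's case (v), handled at a comparable level of rigor.

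The gap is in the \emph{moreover} clause. Your non-meridian subcase is in fact vacuous: since $\bdd P$ bounds $P$ in $V$, we get $[c_0]+[c_1]+[c_2]=0$ in $H_1(V)\cong\mathbb{Z}$ with each $[c_i]=\pm[s]$, and an odd signed sum of copies of $[s]$ vanishing forces $[s]=0$ in $H_1(V)$, i.e.\ $s$ is already the meridian slope. So the whole burden falls on your meridian subcase, and there the argument is only asserted: you claim a meridian disk of $V$ disjoint from $t$ ``with boundary parallel to a component of $\bdd P$'' can be produced and turned into a compressing disk of $P$ by innermost-disk cleanup. A meridian disk disjoint from $t$ exists, but arranging its boundary to be disjoint from $\bdd P$ requires isotopies across bigons on $\bdd V$ that may contain endpoints of $t$; and if after removing loop intersections the disk is disjoint from $P$ altogether, it is not yet a compressing disk \emph{of $P$} --- one must attach an annulus of $\bdd V$ reaching $\bdd P$, and controlling how many points of $\bdd t$ that annulus contains is precisely the difficulty. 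None of this is addressed. The fix is already in your hands: in Case~(A) with all of $\bdd P$ essential, $[\ell]=[c_1]\pm[c_2]$ is $0$ or $\pm2[s]$ on $\bdd V$, and since $\ell$ is embedded it must be null-homologous, hence bounds a disk on $\bdd V$; $t$-essentiality of $\ell$ forces that disk to contain both endpoints of $t$, and Lemma~\ref{lem:annulus} then makes $P'$, hence $P$, $t$-compressible. Combined with your Case~(B) reduction this gives conclusion (1) outright, which is in substance the paper's own route (its cases (i) and (ii)).
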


\begin{proof}
 By Lemma \ref{lem:H},
$P$ is $t$-compressible or $t$-$\bdd$-compressible in $(V,t)$.
In the latter case, 
let $Q$ be a $t$-$\bdd$-compressing disk of $P$ in $(V,t)$.
Set 
$\beta=\bdd Q\cap\bdd V$.
 We have five cases:
(i) $\beta$ is in an annulus component $A$ of $\bdd V-\bdd P$
      such that $A$ contains at most one of the endpoints $\bdd t$;
(ii) $\beta$ is in an annulus component $A'$ of $\bdd V-\bdd P$
     such that $A'$ contains the two endpoints $\bdd t$;
(iii) $\beta$ is in  a disk component $R$ of $\bdd V-\bdd P$
     such that $R$ contains the two endpoints $\bdd t$;
(iv) $\beta$ is in a torus with 1-hole component $U$ of $\bdd V-\bdd P$
     with $U \cap \bdd t = \emptyset$; and 
(v) $\beta$ is in a disk with 2-holes component $Z$ of $\bdd V-\bdd P$
     with $Z \cap \bdd t = \emptyset$.
 Note that (i) or (ii) occurs
when each component of $\bdd P$ is essential in $\bdd V$
(ignoring $\bdd t$).

Cases (i) and (iii).
 `$\bdd$-compression' on a copy of $A$ or $R$ along $Q$
yields a $t$-compressing disk or meridionally compressing disk of $P$.
 Hence we obtain the conclusion (1).
 Note that $\beta$ separates $R$ into two disks
each of which intersects $t$ at a single point
by Definition \ref{def:bdd-comp}.

Case (ii).
 The three boundary loops of $\bdd P$ are essential in $\bdd V$
(ignoring $\bdd t$).
If $\beta$ is essential in $A'$ (ignoring $\bdd t$),
then the annulus obtained from $P$ by $t$-$\bdd$-compression along $Q$
is $t$-compressible by Lemma \ref{lem:annulus}.
Hence $P$ is also $t$-compressible,
and we obtain the conclusion (1).
 Thus we can assume
that $\beta$ is inessential in $A'$
and cuts off a disk from $A'$.
 This disk contains one or two endpoints $\bdd t$
by the condition (4) in Definition \ref{def:bdd-comp}.
 In the former case,
we obtain a meridionally compressing disk of $P$
by `$\bdd$-compression' on a copy of $A'$ along $Q$.
 In the latter case,
$\bdd$-compression on $P$ along $Q$ yields
two annuli, 
one of which has a boundary component
inessential in $\bdd V$ ignoring $\bdd t$,
and is $t$-compressible by Lemma \ref{lem:annulus}.
 This implies that $P$ is $t$-compressible.

Case (iv).
 Every component of $\bdd P$ is inessential in $\bdd V$.
 Performing $\bdd$-compression on $P$ along $Q$,
we obtain two annuli as in Lemma \ref{lem:annulus}.
 The conclusion (1) follows.

\begin{figure}[htbp]
\centering
\includegraphics[width=.7\textwidth]{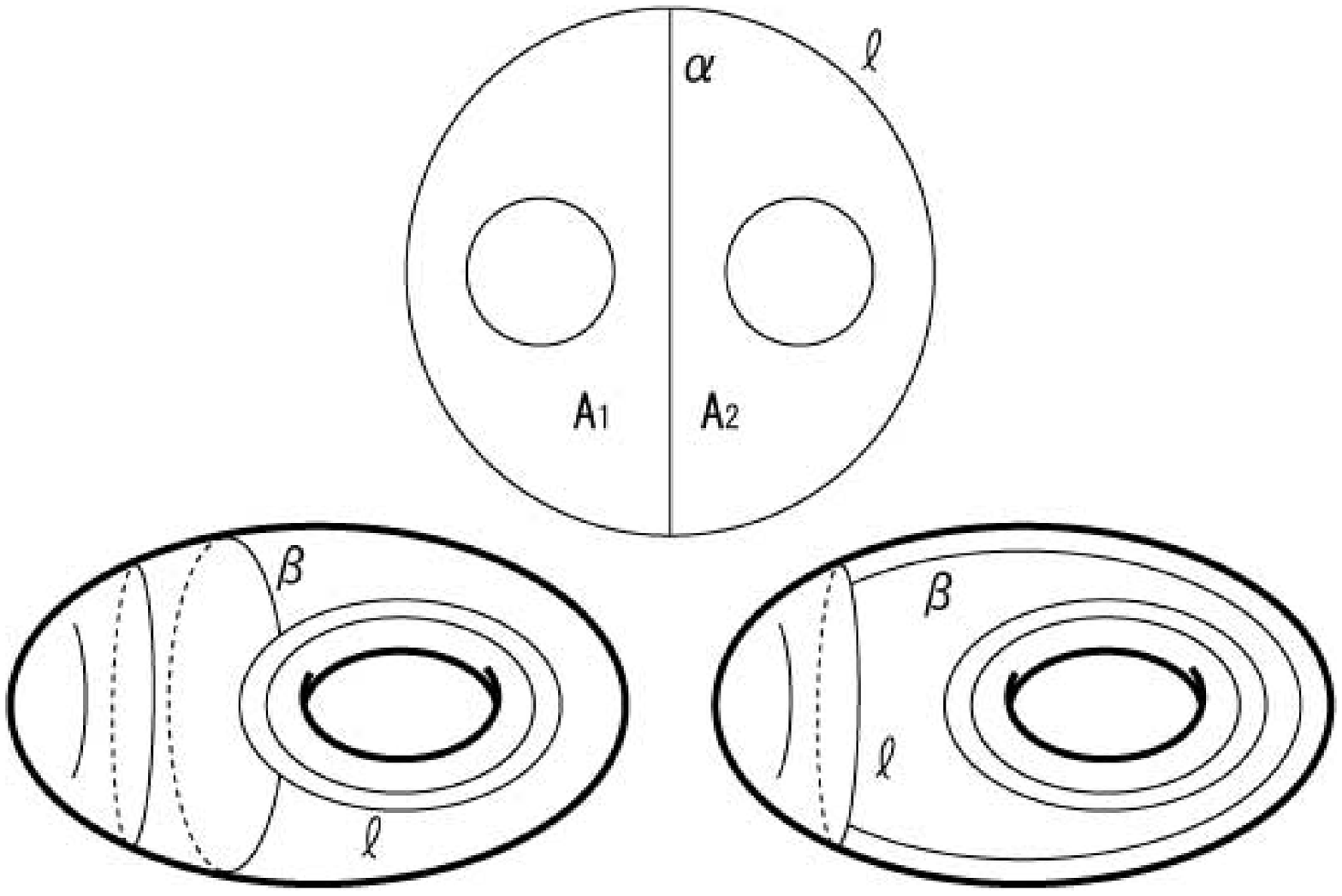}
\caption{}
\label{fig:2-6}
\end{figure}

Case (v).
 If $\beta$ connects two distinct components of $\bdd P$,
then we can obtain an annulus $P'$
from $P$ by $t$-$\bdd$-compression along $Q$
such that $\bdd P'$ is $t$-essential in $\bdd V$.
 This is the conclusion (2).
 Hence we can assume that $\beta$ has its endpoints
in the same component, say $\ell$, of $\bdd P$.
 See Figure \ref{fig:2-6}.
 $t$-$\bdd$-compression on $P$ along $Q$
deforms $P$ into two annuli $A_1$ and $A_2$,
and $\ell$ into two loops,
one of which, say $\ell_1$, is in $\bdd A_1$ 
and the other, say $\ell_2$ in $\bdd A_2$.
 Let $\beta'$ be a `dual' arc of $t$-$\bdd$-compression along $Q$.
 Precisely, $P$ is
a union of $A_1$ and $A_2$ and a tubular neighborhood of $\beta'$ in $\bdd V$
with its interior slightly isotoped into $\text{int}\,V$.
 We say that $P$ is
obtained from $A_1$ and $A_1$ by a band sum along $\beta'$.
 If $A_1 \cup A_2$ is $t$-compressible,
then so is $P$,
and we have the conclusion (1).
 Hence, by Lemma \ref{lem:H},
we can assume that $A_1 \cup A_2$ has a $t$-$\bdd$-compressing disk $D$.
 The arc $\bdd D \cap \bdd V$ connects
two distinct components of $\bdd A_1$ or $\bdd A_2$.
 Hence $\bdd D \cap \bdd V$ is not
in the component of $\bdd V - (\bdd A_1 \cup \bdd A_2)$
which contains the two points $\bdd t$.
 We can isotope $D$ so that it is disjoint from $\beta'$
since $\bdd D \cap \bdd V$ is in an annulus or disk with two holes component
of $\bdd V - (\bdd A_1 \cup \bdd A_2)$.
 Then $D$ gives a $t$-$\bdd$-compressing disk of $P$
after a band sum along $\beta'$.
 Since the arc $\bdd D \cap \bdd V$ connects 
two distinct boundary components of $\bdd A_1$ or $\bdd A_2$, 
it connects two distinct boundary components of $\bdd P$. 
 Applying the arguments on $Q$ in this proof of the lemma to $D$,
we obtain the conclusion (1) or (2).
\end{proof}

\begin{lemma}\label{lem:pants2}
 Let $P$ be a disk with two holes properly embedded in $V$
so that $P$ is disjoint from $t$.
 Suppose that $\bdd P$ is $t$-essential in $\bdd V$,
and that $P$ is $t$-incompressible and meridionally compressible
in $(V, t)$.
 Then a component of $\bdd P$ bounds a meridian disk $D$ in $V$
such that $D$ intersects $t$ transversely in a single point
and that $D \cap P = \bdd D$.
 Moreover,
another component of $\bdd P$ bounds a meridian disk $D'$ in $V$
such that $D'$ intersects $t$ transversely in a single point
and that int\,$D'$ is disjoint from $P$
or intersects $P$ in a single loop.
\end{lemma}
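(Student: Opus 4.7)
The plan is to meridionally compress $P$ along a chosen meridionally compressing disk $E$, producing the required $D$ and an annulus from which $D'$ is extracted. Let $E$ be such a disk, so $E\cap P=\bdd E$ and $|E\cap t|=1$; after a small isotopy if $\bdd E\subset\bdd P$, we may assume $\bdd E\subset \inter P$. Then $\bdd E$ is essential in $P$: otherwise $\bdd E$ bounds a disk $E^*\subset P$ disjoint from $t$, and $E\cup E^*$ is a $2$-sphere in $\inter V$ meeting $t$ in a single interior point, contradicting the parity of intersections of an arc with a separating sphere in the irreducible manifold $V$ (as $\bdd t\subset \bdd V$ lies on one side of the ball bounded by the sphere).

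Since $P$ is a pair of pants, $\bdd E$ is parallel in $P$ to a component of $\bdd P$, call it $\ell_1$. Let $A_1\subset P$ be the annulus between $\bdd E$ and $\ell_1$, and $P'\subset P$ the pair-of-pants piece with $\bdd P'=\bdd E\cup\ell_2\cup\ell_3$. Form $D=E\cup A_1$ and push $A_1$ slightly off $P$ to the side of $E$; this gives an embedded disk in $V$ with $\bdd D=\ell_1$, $|D\cap t|=1$, and $D\cap P=\bdd D$. Since $\ell_1$ is $t$-essential in $\bdd V$, $D$ is a meridionally compressing disk of $\bdd V$ in $(V,t)$, so Lemma~\ref{lem:solid2} implies that $D$ is a meridian disk of $V$.

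For $D'$, form the annulus $A^*=E\cup P'$ with $\inter P'$ pushed slightly off $P$ to the same side as $E$, keeping $\bdd P'$ fixed. Then $\bdd A^*=\ell_2\cup\ell_3$, $|A^*\cap t|=1$, and $\inter A^*\cap P$ is the single loop $\bdd E$, which is inessential in $A^*$ and bounds the sub-disk of $A^*$ containing $A^*\cap t$. Since $A^*$ is not a union of the types listed in Lemma~\ref{lem:H}, it must be $t$-compressible or $t$-$\bdd$-compressible in $(V,t)$. In the $t$-compressible case, choose a $t$-compressing disk $F$ of $A^*$ minimizing $|F\cap P|$, isotoping $\bdd F$ in $A^*$ to avoid $\bdd E$: loops of $F\cap P$ essential in $P$ would produce a $t$-compressing disk of $P$ via an innermost disk on $F$, contradicting the $t$-incompressibility of $P$, while inessential loops are removed by standard innermost disk arguments using the irreducibility of $V$. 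Thus one may assume $F\cap P=\emptyset$. Compressing $A^*$ along $F$ yields two disks; take $D'$ to be the one whose interior contains $A^*\cap t$, bounded by $\ell_2$ or $\ell_3$. Lemma~\ref{lem:solid2} again shows $D'$ is a meridian disk, and by construction $\inter D'\cap P$ is the single loop $\bdd E$.

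The main obstacle is the remaining case where $A^*$ is $t$-$\bdd$-compressible but not $t$-compressible. Here a parallel analysis using a $t$-$\bdd$-compressing disk of $A^*$, with the intersection with $P$ minimized using the $t$-incompressibility of $P$, produces a meridian disk $D'$ bounded by a component of $\bdd P$ and meeting $t$ once, with $\inter D'\cap P$ either empty or a single loop; tracking how the $t$-$\bdd$-compressing arc interacts with $\bdd E$ and $P$ is the delicate point.
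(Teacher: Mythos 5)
Your construction of $D$ is correct and is exactly how the paper's proof begins: meridionally compress $P$ along $E$, observe that $\bdd E$ must be essential in $P$ (the same parity argument), hence parallel to some $\ell_1\subset\bdd P$, and cap off with the annulus $A_1$ to get a meridionally compressing disk of $\bdd V$, which is a meridian disk by Lemma \ref{lem:solid2}. Your handling of the case where $A^*$ is $t$-compressible is also essentially sound. The genuine gap is the case you leave open in the last paragraph: $A^*$ $t$-incompressible and $t$-$\bdd$-compressible. This is not a routine ``parallel analysis,'' and the route you sketch would fail as stated. A $t$-$\bdd$-compressing disk of the annulus $A^*$ meets $A^*$ in an essential arc of $A^*-t$, which either joins $\ell_2$ to $\ell_3$ or has both endpoints on one of them and separates off the puncture $A^*\cap t$; in every case the disk produced by the $\bdd$-compression is bounded by a band sum of arcs of $\bdd A^*$ with an arc on $\bdd V$, not by a component of $\bdd P$. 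So this operation does not yield the required $D'$ without substantial further argument. The case is also not vacuous: an annulus meeting $t$ once that is parallel to a subsurface of $\bdd V$ containing one endpoint of $t$ is $t$-incompressible and $t$-$\bdd$-compressible, so you cannot simply dismiss it.

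The paper closes the ``moreover'' part without ever compressing the annulus, by working on $\bdd V$ instead. Write $R=D$ and let $A$ be the annulus from the meridional compression ($\bdd A=\ell_2\cup\ell_3$, meeting $t$ once). Either $\bdd R$ and some component of $\bdd A$ cobound an annulus $A'$ in $\bdd V-\bdd t$, in which case $D'=R\cup A'$, pushed into the interior of $V$ rel boundary, is the desired meridian disk, and its interior meets $P$ in at most one loop because $P$ is recovered from $R\cup A$ by tubing along a subarc of $t$; or no such $A'$ exists, in which case the three loops cut $\bdd V$ into three annuli, the two adjacent to $\bdd R$ each containing one point of $\bdd t$, and the remaining annulus together with $A$ forms a closed torus meeting $t$ transversely in a single point --- impossible in $V$. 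Replacing your compressibility analysis of $A^*$ by an argument of this kind on $\bdd V$ is what is needed to complete the proof.
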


\begin{proof}
 We obtain a disk $R$ and an annulus $A$ from $P$
by meridionally compression on $P$.
 Each of $R$ and $A$ intersects $t$ transversely in a single point.
 Since $\bdd R$ is $t$-essential in $\bdd V$,
$R$ is a meridian disk of $V$.
 Then a small isotopy moves $R$
so that $R \cap P=\bdd R \subset \bdd P$.

 Suppose first that
$\bdd R$ and a component of $\bdd A$
cobounds an annulus, say $A'$, in $\bdd V - \bdd t$. 
 Then $R \cup A'$ gives a disk properly embedded in $V$
and intersecting $t$ in a single point
when it is pushed into int\,$V$ with its boundary fixed.
 Note that the interior of this disk
intersects $P$ in at most one loop,
since we can recover $P$ from $R \cup A$ by tubing along a subarc of $t$.

 If such an annulus $A'$ does not exist,
then the loops $\bdd (R \cup A)$ divide $\bdd V$
into three annuli,
two of them intersect $\bdd R$, 
and each of these two annuli contains a point of $\bdd t$.
 Hence the other annulus and $A$
form a torus intersecting $t$ in a single point, 
a contradiction.
\end{proof}

\begin{lemma}\label{lem:BoundaryCompressible}
Let $P$ be a disk with two or three holes
properly embedded in $V$
so that $P$ is disjoint from $t$.
Suppose
that $P$ is $t$-incompressible and $t$-$\bdd$-compressible in $(V,t)$.
Let $P'$ be a surface obtained from $P$ by $t$-$\bdd$-compression.
If a component of $\bdd P'$ is $t$-inessential in $\bdd V$,
then one of the following occurs.
\begin{enumerate}
\renewcommand{\labelenumi}{(\theenumi)}
\item
 A component of $\bdd P$ bounds a disk $D$ in $V$
such that $D$ intersects $t$ transversely in a single point
and that $D \cap P = \bdd D$.
\item
 $P$ is meridionally compressible in $(V,t)$
and every component of $\bdd P$ is essential in $\bdd V$
$($ignoring the endpoints $\bdd t\/)$.
\end{enumerate}
\end{lemma}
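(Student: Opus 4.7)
The plan is to analyze how the $t$-inessential boundary loop $\ell'$ of $P'$ arises under the $t$-$\bdd$-compression, and then to use the disk on $\bdd V$ that $\ell'$ bounds to produce either the specific meridian disk required by conclusion~(1) or the global structure required by conclusion~(2).

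First, fix a $t$-$\bdd$-compressing disk $Q$ of $P$, with $\alpha = \bdd Q \cap P$ and $\beta = \bdd Q \cap \bdd V$, and let $P'$ denote the result of the compression. By hypothesis a component $\ell'$ of $\bdd P'$ is $t$-inessential in $\bdd V$, so $\ell'$ bounds a disk $E \subset \bdd V$ with $|E \cap \bdd t|\le 1$. The loop $\ell'$ is either a component of $\bdd P$ that is not affected by the compression, or a new loop formed by the band sum along $\beta$; I split into these two cases.

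In the first case, $\ell'\subset\bdd P$. If $E$ is disjoint from $\bdd t$, then pushing $E$ slightly into $V$ and carrying out the standard innermost-disk / outermost-arc clean-up using $t$-incompressibility of $P$ yields a $t$-compressing disk of $P$, contradicting the hypothesis. (Since $P$ has three or four boundary components, $\ell'$ cannot bound a disk in $P - t = P$.) Hence $|E \cap \bdd t| = 1$. Among the components of $\bdd P$ lying in $E$ I choose one that is innermost on $E$; the subdisk it bounds in $E$ meets $\bdd t$ in zero or one point. The zero-point case again contradicts $t$-incompressibility, so we obtain a subdisk meeting $\bdd t$ in a single point, which after a clean-up of interior intersections with $P$ provides the disk $D$ required by conclusion~(1). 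In the second case, $\ell'$ is one of the new boundary loops produced by the compression. Here I proceed with a case analysis following the proof of Lemma~\ref{lem:pants}, based on which region of $\bdd V - \bdd P$ contains $\beta$. If every component of $\bdd P$ is essential in $\bdd V$ (ignoring $\bdd t$), then every component of $\bdd V - \bdd P$ is an annulus, so only cases (i) and (ii) in the proof of Lemma~\ref{lem:pants} are possible; inspecting which of these can in fact produce a $t$-inessential loop forces $P$ to be meridionally compressible, yielding conclusion~(2). If some component of $\bdd P$ fails to be essential in $\bdd V$, then disk or higher-genus regions appear in $\bdd V - \bdd P$; tracing $\beta$ through such a region locates a component of $\bdd P$ bounding a disk in $\bdd V$ meeting $\bdd t$ in a single point, and an argument parallel to the first case then produces the disk $D$ of conclusion~(1).

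The main obstacle is the cleanup step establishing that the pushed-in disk can be arranged to satisfy $D\cap P = \bdd D$ while retaining its single transverse intersection with $t$: interior loops of intersection with $P$ must be removed using $t$-incompressibility without destroying the meridional point on $t$. I expect this to follow from a standard innermost-disk / outermost-arc argument combined with Lemmas~\ref{lem:solid1} and \ref{lem:solid2}. A secondary technicality is tracking all possible placements of $\beta$ on the torus $\bdd V$ when $P$ has three holes, since the complement $\bdd V - \bdd P$ then admits more configurations than in Lemma~\ref{lem:pants}; here I will enumerate which of these configurations are actually compatible with the hypothesis that $\ell'$ is $t$-inessential, and verify that the remaining ones all lead to conclusion~(1) or (2).
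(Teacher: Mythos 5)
Your overall strategy (split on whether the $t$-inessential component of $\bdd P'$ is an untouched component of $\bdd P$ or a new loop created by the band, then analyse the region of $\bdd V-\bdd P$ containing $\beta$) is workable and your Case A is fine, but there is a genuine gap in the last sub-case. Suppose some component of $\bdd P$ is inessential in $\bdd V$ and take an innermost one, bounding a disk $E\subset\bdd V$ with $\mathrm{int}\,E\cap P=\emptyset$. By $t$-incompressibility $E$ must contain one \emph{or two} of the endpoints $\bdd t$, and you only treat the one-point situation. When $E$ contains \emph{both} endpoints, your claimed step --- ``tracing $\beta$ through such a region locates a component of $\bdd P$ bounding a disk in $\bdd V$ meeting $\bdd t$ in a single point'' --- is false: any other inessential component of $\bdd P$ bounds a disk in $\bdd V$ that either contains $E$ (hence both endpoints) or is disjoint from $E$ (hence contains no endpoint, contradicting $t$-incompressibility after passing to an innermost disk). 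So no component of $\bdd P$ bounds a once-punctured disk \emph{on} $\bdd V$, and no pushed-in subdisk of $\bdd V$ can serve as the disk $D$ of conclusion (1); note that conclusion (2) is also unavailable here since $\bdd P$ has an inessential component.

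The paper closes exactly this case by a different construction. Since the $t$-inessential component of $\bdd P'$ bounds a disk containing exactly one endpoint of $\bdd t$, and both endpoints lie in $\mathrm{int}\,E$ while $\mathrm{int}\,E\cap\bdd P=\emptyset$, that boundary component must run through $E$, forcing $\beta\subset E$ and hence forcing $\alpha=\bdd Q\cap P$ to have both endpoints on the single loop $\bdd E$. The compression therefore splits $\bdd E$ into two loops, each bounding a once-punctured subdisk of $E$, and $P'$ acquires an annulus component $A$ whose other boundary circle is a component of $\bdd P$. The disk required by conclusion (1) is then $A\cup E'$ (pushed slightly off $P$), where $E'$ is one of those once-punctured subdisks --- a properly embedded disk in $V$ meeting $t$ once, \emph{not} a subdisk of $\bdd V$. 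Your proposal contains no substitute for this construction, so as written the argument does not establish the lemma in this configuration. (Separately, your route to conclusion (2) via a region-by-region analysis in the spirit of Lemma \ref{lem:pants} is more laborious than the paper's, which simply notes that the $t$-inessential loop of $\bdd P'$ bounds a disk with zero or one puncture, the zero-puncture case making $P'$, hence $P$, $t$-compressible, and the one-puncture case making $P'$, hence $P$, meridionally compressible; but that is a matter of efficiency, not correctness.)
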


\begin{proof}
 Let $Q$ be a $t$-$\bdd$-compressing disk of $P$,
and $P'$ the surface obtained
by $t$-$\bdd$-compressing $P$ along $Q$.
 Since $\bdd P'$ is $t$-inessential in $\bdd V$,
a component of $\bdd P'$ bounds in $\bdd V$ a disk
which is disjoint from $t$
or contains precisely one endpoint of $\bdd t$.
 In the former case,
$P'$ is $t$-compressible,
and hence $P$ is also $t$-compressible in $(V, t)$
before the $t$-$\bdd$-compressing operation.
 This contradicts our assumption.
 We consider the latter case.
 Then $P'$ is meridionally compressible,
and $P$ is also meridionally compressible in $(V, t)$.
 If $\bdd P$ is essential in $\bdd V$,
then we obtain the conclusion (2).
 Hence we may assume
that $\bdd P$ has a component
which is inessential in $\bdd V$
and bounds a disk $E$ in $\bdd V$
with (int\,$E \cap P = \emptyset)$.
 Since $P$ is $t$-incompressible,
$E$ contains one or two endpoints of $\bdd t$.
 In the former case,
we obtain the conclusion (1) immediately.
 We consider the latter case.
 In the course of $\bdd$-compression on $P$ along $Q$,
the boundary loop $\bdd E$ must be changed into two loops,
and each of them bounds a disk
which contains precisely one endpoint of $\bdd t$.
 The arc $\bdd Q \cap \bdd V$ has its two endpoints
in a single component of $\bdd P$.
 Hence $P'$ has an annulus component $A$.
 Then a component of $\bdd A$ bounds a disk $E'$ in $\bdd V$
such that $E'$ contains precisely one endpoint of $\bdd t$.
 The disk $A \cup E'$ is bounded by a loop of $\bdd P$,
and we can isotope the interior of this disk slightly off of $P$.
 Thus we obtain the conclusion (1).
\end{proof}

\begin{lemma}\label{lem:4-holes}
 Let $P$ be a disk with three holes properly embedded in $V$
so that $P$ is disjoint from $t$.
 If every component of $\bdd P$ is $t$-essential in $\bdd V$,
and if $P$ is $t$-incompressible and $t$-$\bdd$-compressible in $(V,t)$,
then one of the following occurs.
\begin{enumerate}
\renewcommand{\labelenumi}{(\theenumi)}
\item
 A component of $\bdd P$ bounds a disk $D$ in $V$
such that $D$ intersects $t$ transversely in a single point,
and that int\,$D$ is disjoint from $P$
or intersects $P$ in at most two loops.
\item
 There exists a $t$-$\bdd$-compressing disk $Q$ of $P$ in $(V,t)$
such that $\bdd Q \cap P$ connects distinct components of $\bdd P$
and that $\bdd P'$ is $t$-essential in $\bdd V$
where $P'$ is obtained from $P$ by $\bdd$-compression along $Q$.
\item
 $P$ is meridionally compressible in $(V,t)$
and every component of $\bdd P$ is essential in $\bdd V$
$($ignoring the endpoints $\bdd t\/)$.
\end{enumerate}
\end{lemma}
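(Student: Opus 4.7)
The plan is to follow the template of Lemma~\ref{lem:pants}, with one extra case corresponding to the additional boundary component of $P$, which is responsible for the weaker ``at most two loops'' form of conclusion~(1). Let $Q$ be a $t$-$\bdd$-compressing disk of $P$, write $\alpha = \bdd Q \cap P$, and let $P'$ be the surface obtained from $P$ by $t$-$\bdd$-compression along $Q$. If some component of $\bdd P'$ is $t$-inessential in $\bdd V$, then Lemma~\ref{lem:BoundaryCompressible}, applied directly to $P$ (a disk with three holes), yields either conclusion~(1) in its sharp form $D\cap P = \bdd D$ or conclusion~(3) of the present lemma. So from now on assume every component of $\bdd P'$ is $t$-essential. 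If $\alpha$ connects two distinct components of $\bdd P$, then $Q$ itself realizes conclusion~(2).

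The remaining case is that $\alpha$ has both endpoints on a single component $\ell$ of $\bdd P$. Since $\alpha$ is essential in $P$ and $P$ is planar, $\alpha$ separates $P$; the Euler characteristic count $\chi(P')=-1$, combined with the fact that essentiality of $\alpha$ rules out disk components, forces $P'=P_1\sqcup P_2$ with $P_1$ an annulus and $P_2$ a disk with two holes. Let $\ell_1\subset\bdd P_1$ and $\ell_2\subset\bdd P_2$ be the two loops arising from cutting $\ell$. I would now apply Lemma~\ref{lem:pants} to $P_2$: its boundary is $t$-essential by assumption, and it inherits $t$-incompressibility from $P$ because any $t$-compressing disk of $P_2$ can be isotoped off the dual band $\beta'$ joining $P_1$ to $P_2$, becoming a $t$-compressing disk of $P$. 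Consequently, if Lemma~\ref{lem:pants}(1) produces a $t$-compressing disk of $P_2$ we obtain a contradiction; and if Lemma~\ref{lem:pants}(2) produces a $t$-$\bdd$-compressing disk of $P_2$ whose arc joins distinct components of $\bdd P_2$, the same off-band isotopy upgrades it to a $t$-$\bdd$-compressing disk of $P$ whose arc joins distinct components of $\bdd P$, returning us to the earlier subcase and yielding conclusion~(2).

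The only remaining possibility is that $P_2$ is meridionally compressible, in which case Lemma~\ref{lem:pants2} supplies two meridian disks $D$ and $D'$ of $V$, each meeting $t$ transversely in one point, whose boundaries lie on distinct components of $\bdd P_2$, with $\mathrm{int}\,D$ disjoint from $P_2$ and $\mathrm{int}\,D'$ meeting $P_2$ in at most one loop. Since $\bdd P_2$ consists of $\ell_2$ and two genuine components of $\bdd P$, at least one of $\bdd D$, $\bdd D'$ is a component of $\bdd P$. The extra intersections of this chosen disk with the annulus $P_1$ can be bounded by at most one loop via a standard innermost-loop argument combined with the $t$-incompressibility of $P$ (any excess family of $P_1$-intersection loops, after innermost-disk surgery, produces a $t$-compressing disk of $P$). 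Adding the two contributions, $\mathrm{int}\,D$ or $\mathrm{int}\,D'$ meets $P$ in at most two loops, which is exactly conclusion~(1).

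The main technical obstacle is the last step: rigorously controlling the number of intersection loops of the meridian disks $D,D'$ with the annulus $P_1$. This bookkeeping is precisely where the weaker ``at most two loops'' clause of conclusion~(1) (as opposed to the sharper interior-disjoint form in Lemma~\ref{lem:BoundaryCompressible}) becomes necessary, and it has to be carried out with some care to avoid spurious contributions from trivial loops on $D$ or $D'$.
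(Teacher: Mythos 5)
Your overall strategy is the paper's: reduce to the case where $\bdd P'$ is $t$-essential via Lemma \ref{lem:BoundaryCompressible}, dispose of the case where $\alpha$ joins distinct boundary components, and in the remaining case split $P'$ into an annulus $P_1$ and a pair of pants $P_2$ and feed $P_2$ to Lemmas \ref{lem:pants} and \ref{lem:pants2}. But there is a genuine gap at the step where Lemma \ref{lem:pants}(2) hands you a $t$-$\bdd$-compressing disk $E$ of $P_2$ and you claim that ``the same off-band isotopy'' upgrades it to a $t$-$\bdd$-compressing disk of $P$. The isotopy you described earlier works for $t$-compressing disks because their boundaries are loops in $\mathrm{int}\,P_2$ that can be pushed off the band-attachment arcs inside $P_2$. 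It does not apply to $E$: the arc $\bdd E\cap\bdd V$ lies on $\bdd V$ and may cross the dual arc $\delta\subset\bdd V$ of the band sum, and $E$ may moreover have interior intersections with the annulus $P_1$. The paper spends the second half of its proof on precisely this point: it first retakes $E$ to be a $t$-$\bdd$-compressing disk of the whole surface $P_1\cup P_2$ (using $t$-incompressibility of $P_1$ and an outermost-disk argument to clear $E\cap P_1$), and then, when $\delta\cap E\ne\emptyset$, proves a separate Claim that the component of $\bdd V-\bdd P'$ containing $\delta$ is a disk with two holes — this is what makes it possible to slide $\delta$ off $\bdd E\cap\bdd V$. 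Without an argument of this kind the upgrade step is unjustified. A smaller omission in the same step: after compressing $P$ along the upgraded disk you must still check that the new boundary is $t$-essential, and if not fall back to Lemma \ref{lem:BoundaryCompressible} to land in conclusion (1) or (3); you assert conclusion (2) outright.

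The final case is also not closed. You acknowledge yourself that the count of intersections of the Lemma \ref{lem:pants2} disks with $P_1$ is the ``main technical obstacle,'' and the innermost-loop sketch you give does not obviously terminate at ``at most one extra loop'' (an innermost subdisk may contain the point of $D\cap t$, in which case it is not a $t$-compressing disk of $P$ and no contradiction results). The paper sidesteps this bookkeeping by testing meridional compressibility of $P_1\cup P_2$ as a single surface: a meridionally compressing disk of $P_1\cup P_2$ has interior disjoint from both pieces by definition, so the two subcases (disk incident to $P_1$, disk incident to $P_2$) feed cleanly into conclusion (1) without an a posteriori intersection count. I would recommend restructuring your last step along those lines and supplying the Claim about the complementary region of $\bdd P'$ on $\bdd V$.
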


\begin{proof}
 Let $Q$ be a $t$-$\bdd$-compressing disk of $P$,
and $P'$ the surface obtained from $P$
by $\bdd$-compression on $P$ along $Q$.
 If a component of $\bdd P'$ is $t$-inessential in $\bdd V$,
we obtain the conclusion (1) or (3)
by Lemma \ref{lem:BoundaryCompressible}.
 Hence we may assume
that $\bdd P'$ is $t$-essential in $\bdd V$.
 Set $\alpha=\bdd Q\cap P$, and $\beta=\bdd Q\cap\bdd V$.
 If $\alpha$ connects distinct components of $\bdd P$,
we obtain the conclusion (2) in this lemma.
So we may assume
that $\alpha$ has both endpoints in the same component of $\bdd P$.
 Then $P'$ is
a union of an annulus $A$ and a disk with two holes $S$.

 By Lemma \ref{lem:pants}, either
(1) $S$ is $t$-compressible or meridionally compressible in $(V,t)$,
or
(2) there exists a $t$-$\bdd$-compressing disk $E$ of $S$ in $(V,t)$
such that the arc $\bdd E \cap S$ connects
two distinct components of $\bdd S$.

 Case (1).
 Since $S$ is $t$-compressible or meridionally compressible in $(V, t)$,
so is $A \cup S$.
 If $A \cup S$ is $t$-compressible, then $P$ is also $t$-compressible.
 This contradicts the assumption.
 Thus we may suppose
that $A \cup S$ is $t$-incompressible and meridionally compressible
in $(V,t)$.
 If the meridionally compressing disk is incident to $A$,
then every component of $\bdd A$ bounds a disk
which intersects $t$ in a single point.
 Since one component of $\bdd A$ is a component of $\bdd P$,
we obtain the conclusion (1).
 If the meridionally compressing disk is incident to $S$,
the conclusion (1) follows by Lemma \ref{lem:pants2}.

 Case (2).
 We can retake $E$
so that it is a $t$-$\bdd$-compressing disk of $A \cup S$
with the arc $\bdd E \cap \bdd V$ connecting 
distinct components of $\bdd A$ or $\bdd S$.
(Since $P$ is $t$-incompressible, so is $A$.
 Hence we can retake $E$
so that $E \cap A$ has no loop component.
 Further,
$E$ can be deformed
so that every arc of $A \cap E$ is essential in $A$
because the arc $\bdd E \cap \bdd V$ connects distinct components of $\bdd S$.
 When $E \cap A \ne \emptyset$,
we retake $E$ to be an outermost disk in original $E$
so that it is disjoint from $S$.)
 Let $\delta$ be a `dual' arc of $t$-$\bdd$-compression along $Q$.
 $P$ is recovered from $A \cup S$ by a band sum along $\delta$.
 See Figure \ref{fig:2-9}.

\begin{figure}[htbp]
\centering
\includegraphics[width=.7\textwidth]{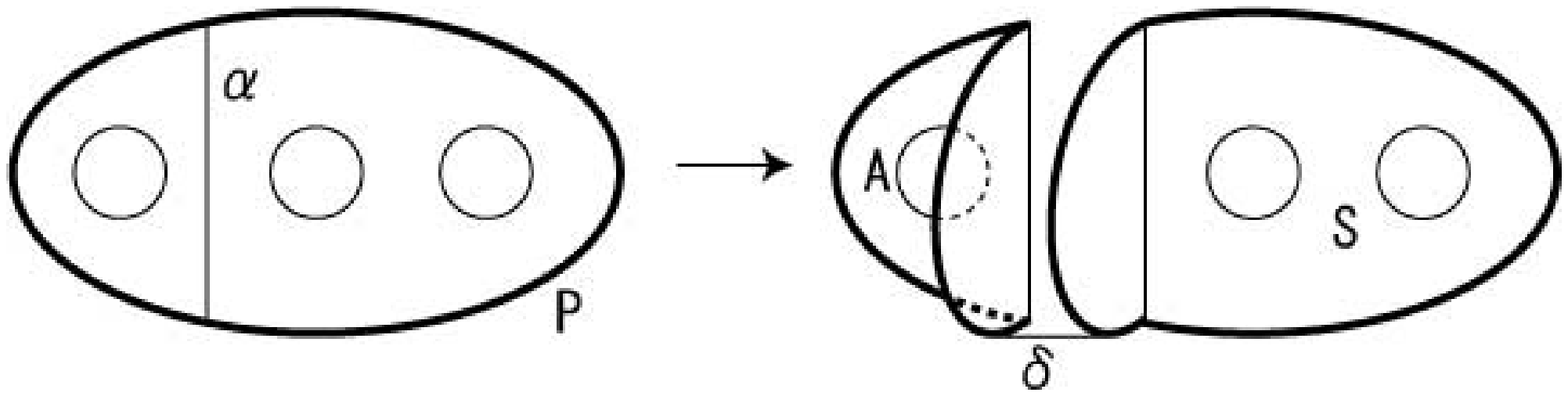}
\caption{}
\label{fig:2-9}
\end{figure}

 Suppose that $\delta \cap E = \emptyset$.
 Then $E$ forms a $t$-$\bdd$-compressing disk of $P$.
 Let $\tilde{P}$ be the surface obtained from $P$
by $t$-$\bdd$-compression along $E$.
 If $\bdd \tilde{P}$ is $t$-essential in $\bdd V$,
we obtain the conclusion (2).
 If not, we have the conclusion (1) or (3)
by Lemma \ref{lem:BoundaryCompressible}.

 Hence we can finally assume
that $\delta \cap E \neq \emptyset$.

\medskip

\noindent Claim.
 Among the closures of the components of $\bdd V - \bdd P'$,
let $U$ be the surface which contains the arc $\delta$.
 Then $U$ is a disk with two holes.

\begin{proof}
 By the definition of $\delta$,
one point of $\bdd \delta$ is in a component of $\bdd A$
and the other is in $\bdd S$.
 On the other hand,
the arc $\bdd E \cap \bdd V$ connects
either two boundary components of $A$
or those of $S$.
 Hence $U$ has at least three boundary components.
 Recall that the loops $\bdd P'$ are $t$-essential in $\bdd V$
as shown at the first step of this proof of lemma.
 Hence $U$ is a disk with two holes.
\end{proof}

 By this claim, we can move $\delta$ by an isotopy
so that $\delta \cap E=\emptyset$.
 Then this case comes to the previous case.
\end{proof}

\section{Surfaces in a handlebody of genus two with a core}

 We recall a lemma on  `essential'
surfaces
in a pair of a handlebody of genus two
and a core in it.

\begin{lemma}[Lemma 3.10 in \cite{GHY}]\label{lem:SurfaceIn20}  
Let $W$ be a handlebody of genus two,
and $K$ a core loop in $W$.
 Let $F$ be a compact orientable 2-manifold properly embedded in $W$
so that $F$ is transverse to $K$.
 Suppose that $F$ is $K$-incompressible and $K$-$\bdd$-incompressible.
 Then $F$ is a disjoint union of finitely many surfaces as below:
\begin{enumerate}
\renewcommand{\labelenumi}{(\theenumi)}
\item
a 2-sphere disjoint from $K$;
\item
a 2-sphere which bounds a trivial 1-string tangle in $(W,K)$;
\item
an essential disk of $W$ disjoint from $K$;
\item
an essential disk of $W$ intersecting $K$ transversely in a single point; 
\item
a torus bounding a solid torus
which forms a regular neighborhood of $K$ in $W$.
\end{enumerate}
\end{lemma}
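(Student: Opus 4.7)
The strategy is to pass to the exterior $E = \mathrm{cl}(W - N(K))$ of the core $K$ and invoke the classification of essential surfaces in a compression body. After a small isotopy making $F$ transverse to $K$, I would shrink $N(K)$ so that $F \cap N(K)$ is a disjoint union of meridian disks of $N(K)$, one for each point of $F \cap K$. Since $K$ is a core of $W$, the manifold $E$ is a compression body with $\partial_- E = \partial N(K)$ a torus and $\partial_+ E = \partial W$ of genus two, obtained from $T^2 \times I$ by attaching a single $1$-handle to $T^2 \times \{1\}$, and $F_0 := F \cap E$ is a properly embedded surface in $E$ with $\partial F_0 = \partial F \cup (F \cap \partial N(K))$.

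Next, I would check that the hypotheses on $F$ translate to $F_0$ being incompressible and $\bdd$-incompressible in $E$. Indeed, any compressing disk $D$ for $F_0$ in $E$ is automatically disjoint from $K$, so unless $\bdd D$ bounds a disk in $F_0 \cong F - (F\cap K)$ it would be a $K$-compressing disk for $F$, contradicting the hypothesis; the analogous argument for $\bdd$-compressions uses condition~(4) of Definition~\ref{def:bdd-comp} to rule out trivial parallelisms on $\partial N(K)$. Then I would apply the standard classification: every essential component of $F_0$ in a compression body of this form is either (i) an inessential $2$-sphere, (ii) a disk (possibly $\bdd$-parallel) with boundary on $\partial_+ E$, (iii) a closed torus parallel to $\partial_- E$, (iv) a vertical annulus $\gamma \times I$ for some essential arc $\gamma$ in $\partial_- E$, or (v) an annulus parallel into $\partial_- E$.

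Finally, I would reconstruct $F$ by regluing the meridian disks of $N(K)$ onto the circles of $\partial F_0$ lying on $\partial_- E$. A disk component of $F_0$ avoiding $\partial_- E$ is already a disk of type~(3); capping a disk of $F_0$ that has one boundary circle on $\partial_- E$ with a meridian disk yields an essential disk meeting $K$ transversely once, namely type~(4). A torus parallel to $\partial_- E$ is already type~(5), and spheres disjoint from $K$ in $F_0$ are type~(1). A vertical annulus closes up, after regluing its two meridian caps on $\partial_- E$, into a $2$-sphere meeting $K$ in two points; then $K$-incompressibility forces the two arcs of $K$ on one side to be jointly unknotted, so the sphere bounds a trivial $1$-string tangle, giving type~(2). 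The main obstacle is precisely this vertical-annulus case: one must rule out that the resulting $2$-sphere bounds a non-trivial $2$-string tangle. This is where $K$-incompressibility is used in an essential way, combined with the observation that the product region $T^2 \times I \subset E$ lets one isotope the vertical annulus off of the $1$-handle before closing up.
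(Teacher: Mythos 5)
This lemma is quoted in the paper from \cite{GHY} (Lemma 3.10) without proof, so there is no in-paper argument to compare against; your exterior/compression-body strategy is therefore judged on its own terms. The overall plan is reasonable, but it has a genuine gap at the step where you transfer the hypotheses to $F_0 = F \cap E$. The hypothesis that $F$ is $K$-$\bdd$-incompressible in $(W,K)$ only controls $\bdd$-compressing disks whose arc lies on $\bdd W$; it says nothing about $\bdd$-compressions of $F_0$ whose arc lies on $\partial N(K) = \partial_- E$, because $\partial N(K)$ is not part of $\bdd W$ and such disks do not correspond to any $K$-$\bdd$-compression of $F$. Your appeal to condition (4) of Definition \ref{def:bdd-comp} is misplaced: that condition concerns arcs in $\bdd X = \bdd W$, not in $\partial N(K)$. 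Consequently $F_0$ need \emph{not} be $\bdd$-incompressible in $E$, and you are not entitled to invoke the classification of incompressible, $\bdd$-incompressible surfaces in a compression body. This is not a hypothetical worry: the very components you must produce in conclusion (2) --- spheres meeting $K$ twice and bounding a trivial $1$-string tangle --- become annuli in $E$ with both boundary circles meridians on $\partial_- E$, and these are parallel into $\partial_- E$, hence $\bdd$-compressible toward $\partial_- E$. A correct argument must treat these separately, e.g.: such an annulus is incompressible in $E$ (by $K$-incompressibility of $F$, since loops essential in $F_0$ are essential in $F - K$); an incompressible annulus in the compression body $E$ with both boundaries on $\partial_- E$ cannot be $\bdd$-incompressible (vertical annuli join $\partial_-E$ to $\partial_+E$), so it admits a $\bdd$-compression toward $\partial_- E$, and incompressibility of $\partial_- E$ in $E$ together with irreducibility of $E$ then forces it to be parallel into $\partial_- E$, which is exactly the trivial-tangle conclusion. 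You flag this case as ``the main obstacle'' but your proposed fix (isotoping a ``vertical annulus'' off the $1$-handle) conflates two different kinds of annuli and does not supply this argument.

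Two smaller points. First, your list of essential surfaces in $E$ is garbled: a vertical annulus is $\gamma \times I$ for an essential \emph{circle} $\gamma$ in $\partial_- E$, not an arc, and a list of ``essential'' components should not contain inessential spheres or annuli parallel into $\partial_- E$; as written it is not the statement of any standard classification theorem. Note also that for annulus components of $F_0$ with one boundary circle on each of $\partial_+E$ and $\partial_-E$ (coming from disks of $F$ meeting $K$ once), no $\bdd$-compression along $\partial_- E$ can exist for elementary reasons (the arc of $\bdd D \cap \bdd E$ would have to join $\bdd W$ to $\partial N(K)$), so the problem is genuinely concentrated in the both-boundaries-on-$\partial_-E$ case. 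Second, in the tangle discussion a sphere meeting $K$ twice bounds a ball $B$ in the irreducible manifold $W$ and $K \cap B$ is a \emph{single} arc (one arc on each side of the sphere, not ``two arcs on one side''); the assertion that $K$-incompressibility alone yields triviality of the tangle needs the argument sketched above rather than a one-line appeal.
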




\section{Weakly $K$-reducible splittings}

 We recall some results on $K$-reducible and weakly $K$-reducible splittings
in this section.

\begin{definition}
A $(1,1)$-splitting $(M,K)=(V_1,t_1)\cup_{H_1} (V_2,t_2)$
is called {\it $K$-reducible} if there are $K$-compressing disks
$D_1$ and $D_2$ of $H_{1}$ in $V_1$ and $V_2$ respectively
such that $\bdd D_1\cap\bdd D_2=\emptyset$.
\end{definition}

\begin{definition}
A knot $K$ in $M$ is called a {\it core knot}
if its exterior is a solid torus.
\end{definition}

Note that a knot in the $3$-sphere is a core knot
if and only if it is the trivial knot.


\begin{proposition}[Lemma 3.2 in \cite{Hy3}]\label{lem:11weak}
 Let $M$ be the $3$-sphere or a lens space other than $S^2 \times S^1$,
and $K$ a knot in $M$.
 Let $(M,K)=(V_1,t_1)\cup_{H_1} (V_2,t_2)$ be a $(1,1)$-splitting.
 If it is weakly $K$-reducible,
then one of the following occurs:
\begin{enumerate}
\renewcommand{\labelenumi}{(\theenumi)}
\item
 $K$ is a trivial knot;
\item
 $K$ is a core knot in a lens space;
\item
 $K$ is a 2-bridge knot in the $3$-sphere;
\item
 $K$ is a connected sum of a core knot in a lens space
and 2-bridge knot in the $3$-sphere.
\end{enumerate}
 When the $(1,1)$-splitting $H_1$ is $K$-reducible,
$K$ is trivial.
\end{proposition}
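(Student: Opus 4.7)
The plan is a case analysis on the normal forms of the disks $D_1,D_2$ provided by weak $K$-reducibility. By Lemmas \ref{lem:solid1} and \ref{lem:solid2}, each $D_i$ is of one of three types in $(V_i,t_i)$: (i) a meridian disk of $V_i$ disjoint from $t_i$; (ii) a $\partial$-parallel disk in $V_i$ whose boundary bounds a disk $E_i\subset H_1$ containing both endpoints of $K\cap H_1$; or (iii) a meridian disk of $V_i$ meeting $t_i$ transversely in one point. In types (i) and (iii) the loop $\partial D_i$ is the meridian slope of $V_i$ on $H_1$; in type (ii) it bounds the disk $E_i$ on $H_1$. This gives six unordered combinations for the pair $(D_1,D_2)$.

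The key observation is that $\partial D_1$ and $\partial D_2$ are disjoint loops on the torus $H_1$, and two disjoint essential simple closed curves on a torus must be isotopic. Hence in any combination where both boundaries are essential, i.e.\ both disks are of types (i) or (iii), the meridian slopes of $V_1$ and $V_2$ coincide on $H_1$, which forces $M=S^2\times S^1$, excluded by hypothesis. The remaining cases necessarily include at least one disk of type (ii), and without loss of generality I may assume $D_2$ is of type (ii). Then $\Sigma:=D_2\cup E_2$ is an embedded $2$-sphere in $M$ meeting $K$ transversely in the two points $\partial t_2=\partial t_1\subset E_2$ and bounding a $3$-ball $B_2\subset V_2$ which contains $t_2$ as a trivial $1$-string tangle. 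This realizes a $(0,2)$-decomposition of $(M,K)$ with a trivial tangle on the $B_2$-side.

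I would then analyze the opposite side $M\setminus\mathrm{int}\,B_2$ using the canceling disk $C_1$ of $t_1$ in $V_1$ together with the second disk $D_1$ and the solid-torus structures of $V_1$ and $V_2\setminus B_2$. After standard cut-and-paste minimizing $|C_1\cap D_1|$ and arranging $C_1\cap H_1$ in controlled position with respect to $\partial D_1$, I would isotope $K$ into the splitting torus $H_1$ as a simple closed curve of a well-defined slope. Depending on the slope, one obtains: a meridian of $V_1$ or $V_2$ (giving $K$ trivial, conclusion (1)); a longitude of one of the solid tori (giving a core knot in a lens space, conclusion (2)); a slope of $2$-bridge type in $S^3$ (giving a $2$-bridge knot, conclusion (3)); or a slope that decomposes along an essential separating $2$-sphere into a core summand and a $2$-bridge summand (giving conclusion (4)). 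For the $K$-reducible statement, both disks are restricted to types (i) and (ii), and the same analysis then forces the realized slope to be meridional, so that $K$ is trivial.

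The main difficulty lies in the final step: coordinating $C_1$, $D_1$, and $E_2$ by disk-surgery arguments to identify the slope of the isotoped $K$ on $H_1$ in each subcase, and matching it against the classification of $(1,1)$-knots in $S^3$ and lens spaces, so as to extract exactly the four listed conclusions and rule out extraneous configurations such as non-$2$-bridge torus knots.
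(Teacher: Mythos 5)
First, note that the paper does not prove this proposition at all: it is quoted verbatim as Lemma 3.2 of \cite{Hy3}, so there is no internal proof to compare against. Judged on its own merits, your opening reduction is correct and is the natural first step: Lemmas \ref{lem:solid1} and \ref{lem:solid2} do give exactly the three normal forms, two disjoint essential curves on the torus $H_1$ are parallel, and coincidence of the two meridian slopes would force $M=S^2\times S^1$; hence at least one of $D_1,D_2$ is the $\partial$-parallel type cutting off a ball $B_2\supset t_2$, and $D_2\cup E_2$ is a sphere meeting $K$ twice with $(B_2,t_2)$ a trivial tangle. Up to this point the argument is sound.

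The endgame, however, contains a genuine error, not merely a gap. You propose to ``isotope $K$ into the splitting torus $H_1$ as a simple closed curve of a well-defined slope'' and then read the conclusion off the slope, including ``a slope of $2$-bridge type.'' An embedded essential simple closed curve on a Heegaard torus of $S^3$ is a torus knot (or the unknot); there is no slope realizing a non-torus $2$-bridge knot such as the figure-eight knot, yet every $2$-bridge knot has a weakly $K$-reducible (indeed doubly meridionally stabilized) $(1,1)$-splitting and must be reachable by your analysis. Concretely, the obstruction is that in the key case --- $D_1$ a meridian disk of $V_1$ meeting $t_1$ once and $D_2$ of $\partial$-parallel type --- the canceling disk $C_1$ of $t_1$ necessarily meets $D_1$, and the arc $\partial C_1\cap H_1$ cannot in general be made disjoint from the arc $t_2$ pushed into $E_2$; so $K$ only acquires a diagram on $H_1$ with crossings, not an embedding. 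The correct continuation is different in kind: surger $H_1$ along $D_1$ (equivalently, tube $D_1$ to the sphere $D_2\cup E_2$) to produce a sphere meeting $K$ in four points; an outermost-arc argument on $C_1\cap D_1$ shows the two arcs of $t_1$ cut by $D_1$ form a trivial $2$-string tangle on the $V_1$ side, while $t_2$ and the core of $N(D_1)$ form a trivial tangle on the other side. When that other side $V_2\cup N(D_1)$ is a ball this exhibits $K$ as a $2$-bridge knot in $S^3$; when it is a punctured lens space one obtains a core knot or the connected sum in conclusions (2) and (4). The cases with no meridionally compressing disk (both disks $\partial$-parallel, or one meridian disk disjoint from $K$ plus one $\partial$-parallel disk) instead yield a $1$-bridge presentation of $K$ with respect to a sphere, hence triviality, which is also how the final assertion about $K$-reducibility should be proved rather than via a ``meridional slope.''
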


\begin{definition}
A $(2,0)$-splitting $(M,K)=(W_1,K)\cup_{H_2} (W_2,\emptyset)$
is called {\it $K$-reducible} if there are a $K$-compressing
disk $D_1$ of $H_2$ in $(W_1,K)$ and an essential disk $D_2$
in $W_2$ such that $\bdd D_1=\bdd D_2$ in $H_2$.
\end{definition}

\begin{definition}
A knot $K$ in $M$ is called a {\it split knot}
if its exterior cl$(M-N(K))$ is reducible. 
A knot $K$ is called {\it composite\/}
if there is a $2$-sphere $S$ embedded in $M$
such that $S$ is separating in $M$,
that $S$ intersects $K$ transversely in precisely two points
and that the annulus $S \cap E(K)$ is
incompressible and $\bdd$-incompressible in $E(K)$.
\end{definition}

\begin{proposition}[Proposition 2.9 in \cite{GHY}]\label{prop:KReducible}
A $(2,0)$-splitting $(M, K)=(W_1, K) \cup_{H_2} (W_2, \emptyset)$
is $K$-reducible
if and only if $K$ is either a core knot or a split knot.
\end{proposition}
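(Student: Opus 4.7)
The plan is to identify $K$-reducibility of the $(2,0)$-splitting with reducibility of the induced genus two Heegaard splitting of the exterior $E(K)=\text{cl}\,(M-N(K))$, and then invoke standard Heegaard splitting theory. Since $K$ is a core in $W_1$, the complement $C=\text{cl}\,(W_1-N(K))$ is a compression body with $\bdd_- C=\bdd N(K)$ and $\bdd_+ C=H_2$, so the $(2,0)$-splitting induces the Heegaard splitting $E(K)=C\cup_{H_2} W_2$. A $K$-reducing pair $(D_1,D_2)$, being a pair of disks with $D_1\cap K=\emptyset$ and matching essential boundaries on $H_2$, is exactly a reducing pair for this Heegaard splitting, and this identification is what I would use in both directions.

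For the ``if'' direction, if $K$ is split then $E(K)$ is reducible, and Haken's lemma produces a reducing pair of disks for any Heegaard splitting of $E(K)$, yielding the $K$-reduction. If $K$ is a core knot then $E(K)$ is a solid torus, whose minimal Heegaard genus is one, so the induced genus two splitting must be a stabilization; stabilized splittings are reducible by construction. For the ``only if'' direction, given a $K$-reducing pair $(D_1,D_2)$ form the $2$-sphere $S=D_1\cup D_2$ in $E(K)$. Either $S$ is essential in $E(K)$, in which case $E(K)$ is reducible and $K$ is split by definition, or $S$ bounds a $3$-ball $B$ in $E(K)$. In the latter case $K\not\subset B$, so $\bdd_- C$ lies outside $B$, and $S$ is a reducing sphere bounding a ball on the side disjoint from $\bdd_- C$. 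The standard destabilization argument now compresses $H_2$ along $D_1$ and $D_2$ and removes the resulting $2$-sphere component to produce a Heegaard splitting of $E(K)$ of genus one; since $\bdd E(K)$ is a single torus, such a splitting forces the compression body side to be $T^2\times[0,1]$ and the handlebody side to be a solid torus, and their gluing along a torus makes $E(K)$ itself a solid torus, so $K$ is a core knot.

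The main obstacle is the destabilization step in the ``only if'' direction: one must verify that the surface obtained from $H_2$ by compressing along $D_1, D_2$ and deleting the $2$-sphere component bounding $B$ really is a Heegaard surface of genus one. This requires distinguishing whether $\bdd D_1$ separates $H_2$ and controlling the topology of $H_2\cap B$, since a priori that intersection could be a subsurface of positive genus rather than just a disk; one would have to check, using the fact that $B$ is a $3$-ball and applying the irreducibility of a ball, that no extra topology is hidden inside $B$ and that the genus indeed drops by exactly one.
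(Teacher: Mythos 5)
The paper itself gives no proof of this proposition---it is quoted verbatim from Proposition 2.9 of \cite{GHY}---so there is no in-paper argument to compare yours against; I can only assess the proposal on its own terms. Your translation of $K$-reducibility into reducibility of the induced genus two Heegaard splitting $E(K)=C\cup_{H_2}W_2$, with $C=\mathrm{cl}(W_1-N(K))$, is the right move, and the ``only if'' direction is sound. The ``obstacle'' you flag at the end is easily dispatched: if $S=D_1\cup D_2$ bounds a ball $B$ then $S$ separates $E(K)$, so $\partial D_1$ separates $H_2$ into two once-punctured subsurfaces whose genera sum to two; neither can be a disk because $\partial D_1$ is essential in $H_2$, so both are once-punctured tori. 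Capping $S$ off turns the splitting inside $B$ into a genus one splitting of $S^3$, which is standard by Waldhausen, so the original splitting destabilizes to a genus one splitting of $E(K)$; the compression body side of that splitting contains $\partial E(K)$ and hence is $T^2\times[0,1]$, forcing $E(K)$ to be a solid torus. The split-knot half of the ``if'' direction via Haken's lemma (in its compression-body form) is also fine.

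The one genuine gap is the core-knot half of the ``if'' direction. You assert that the induced genus two splitting of the solid torus $E(K)$ ``must be a stabilization,'' but non-minimal genus does not imply stabilized in general (Casson--Gordon produced irreducible splittings of non-minimal genus), so this needs an actual argument or citation. What you are invoking is the standardness of Heegaard splittings of handlebodies/compression bodies; to make it self-contained one can argue that since $\partial E(K)$ is compressible and $\partial_-C$ is incompressible in $C$, a meridian disk of $E(K)$ can be isotoped (Haken/Casson--Gordon) to meet $H_2$ in a single essential circle, and cutting along it reduces the splitting to a splitting of a ball, where Waldhausen again applies. Without some such input, the core-knot direction is not yet a proof.
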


\begin{proposition}[Proposition 2.14 in \cite{GHY}]\label{prop:20weak}
 A $(2,0)$-splitting $(M, K)=(W_1, K) \cup_{H_2} (W_2, \emptyset)$
is weakly $K$-reducible
if and only if one of the following occurs:
\begin{enumerate}
\renewcommand{\labelenumi}{(\theenumi)}
\item
 the $(2,0)$-splitting $H_2$ is $K$-reducible;
\item
 the $(2,0)$-splitting $H_2$ is meridionally stabilized; or
\item
$K$ is a composite knot.
\end{enumerate}
\end{proposition}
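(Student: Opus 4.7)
My plan is to prove both implications. For the $(\Leftarrow)$ direction I dispatch each condition by direct construction. Given (1), the witnessing $K$-reducing disks satisfy $\partial D_1 = \partial D_2$, which become disjoint after a small isotopy on $H_2$. Given (2), starting from disks $D_1 \subset W_1$ and $D_2 \subset W_2$ with $|\partial D_1 \cap \partial D_2| = 1$ and $|D_1 \cap K| = 1$, I would produce a second compressing disk $D_2'$ of $W_2$ disjoint from $\partial D_1$: select any compressing disk $E$ of $W_2$ disjoint from $D_2$ (which exists because $W_2$ has genus two) and band-sum $E$ with copies of $D_2$ along subarcs of $\partial D_1$ to eliminate intersections with $\partial D_1$. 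Given (3), the sphere $S$ decomposing $K$ into summands is isotoped to meet $H_2$ transversely in loops; a standard innermost-disk/outermost-arc argument on $S \cap H_2$ produces a meridionally compressing disk in $W_1$ together with a disjoint compressing disk in $W_2$.

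For the $(\Rightarrow)$ direction, let $(D_1, D_2)$ witness weak $K$-reducibility. Both $\partial D_1$ and $\partial D_2$ are essential in $H_2$: otherwise one of them bounds a disk in $H_2$ which, combined with the corresponding disk in $W_i$, produces a sphere in $W_i$ meeting $K$ at most once, contradicting essentiality of the original disk or the fact that $K$ is a closed loop. The two disjoint essential loops are then either parallel on $H_2$ or not. In the parallel sub-case, let $A$ be the annulus they cobound on $H_2$ and set $S := D_1 \cup A \cup D_2$, a closed surface in $M$. If $D_1$ is meridionally compressing, $S$ is a $2$-sphere meeting $K$ in a single point, impossible for a closed loop; this sub-case cannot occur. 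If $D_1$ is $K$-compressing, $S$ is a $2$-sphere disjoint from $K$: when essential in $E(K)$, $K$ is a split knot and (1) follows from Proposition \ref{prop:KReducible}; when $S$ bounds a ball in $M$ disjoint from $K$, an isotopy through the ball identifies $D_1$ and $D_2$ as disks with matching boundary, again yielding $K$-reducibility.

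In the non-parallel sub-case I surger $H_2$ along $D_1$ and $D_2$ and analyze the resulting $2$-manifolds in $W_1$ and $W_2$ via Lemma \ref{lem:SurfaceIn20}. Depending on whether $D_2$ is separating or non-separating in $W_2$ and on the position of the $K$-puncture of $D_1$, the analysis yields either a $K$-reducing sphere (case (1)), a composite decomposition sphere for $K$ (case (3)), or, in the meridionally compressing case, a torus obtained by tubing $H_2$ around $K$ via $D_1$ which together with $D_2$ exhibits meridional stabilization (case (2)). The main difficulty is this non-parallel sub-case: one must separate numerous sub-sub-cases on the separating versus non-separating nature of $D_2$, on the placement of the $K$-puncture of $D_1$, and on how the two loops sit on the genus two surface $H_2$, and verify via Lemma \ref{lem:SurfaceIn20} that each combinatorial configuration corresponds to exactly one of the three listed conclusions without spurious overlap.
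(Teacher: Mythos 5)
The first thing to say is that the paper does not prove this statement at all: it is imported verbatim as Proposition 2.14 of \cite{GHY}, so there is no in-paper argument to compare against and your reconstruction has to stand on its own. Your backward direction is acceptable as a sketch: (1) is immediate, (2) is the standard disk-slide argument using the single point of $\partial D_1\cap\partial D_2$ (though you should verify that the band-summed disk remains essential in $W_2$), and (3) is the standard, if nontrivial, innermost-circle/outermost-arc argument on the decomposing sphere. In the forward direction the parallel sub-case is also essentially right --- indeed simpler than you make it, since $D_1\cup A$ pushed into $W_1$ already has boundary $\partial D_2$ and exhibits $K$-reducibility directly --- except that your dismissal of the meridional sub-case (``a sphere meeting $K$ once is impossible'') tacitly assumes every $2$-sphere in $M$ is separating, which holds in $S^3$ and lens spaces but is not justified for the general closed $M$ for which the proposition is stated.

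The genuine gap is the non-parallel sub-case of the forward direction, which is the heart of the proposition and is not proved but only described. Two concrete problems. First, Lemma \ref{lem:SurfaceIn20} applies only to surfaces that are $K$-incompressible and $K$-$\partial$-incompressible in $(W_1,K)$, and the surfaces obtained by surgering $H_2$ along $D_1$ and $D_2$ have no reason to satisfy either hypothesis, so the tool you invoke does not engage. In the typical configuration (both boundaries non-separating with non-separating union) the surgered surface is a $2$-sphere disjoint from $K$; when that sphere bounds a ball in $E(K)$, so that $K$ is not split and Proposition \ref{prop:KReducible} gives nothing, one must run a Haken/Casson--Gordon-type argument to recover reducibility or stabilization, and that step is entirely absent. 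Second, conclusion (2) requires producing a disk in $W_1$ meeting $K$ once whose boundary meets the boundary of a disk in $W_2$ in exactly one point; you begin with \emph{disjoint} boundaries, and ``tubing $H_2$ around $K$ via $D_1$'' produces the torus of the associated $(1,1)$-splitting rather than the required intersecting pair, so the stabilizing pair is never constructed. As written, the proposal identifies where the work lies but does not do it.
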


\begin{lemma}\label{lem:satellitediagram}
If a $(1,1)$-splitting $H_1$ admits a satellite diagram,
one of the following holds: 
\begin{enumerate}
\renewcommand{\labelenumi}{(\theenumi)}
\item
the knot $K$ is the trivial knot;
\item
the knot $K$ is the torus knot;
\item
the knot $K$ is a satellite knot;
\item 
the $(1,1)$-splitting $H_1$ admits a satellite diagram of a longitudinal slope. 
\end{enumerate}
\end{lemma}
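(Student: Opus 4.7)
The strategy is a case analysis on the slope of $l$ on the two tori $\partial V_1$, $\partial V_2$ combined with the winding number $n$ of $K$ about $l$ after isotoping $K$ close to $H_1$. To set up the positioning: since $C_i \cap l = \emptyset$, I use each canceling disk $C_i$ to isotope $t_i$ through $V_i$ onto the arc $\alpha_i := C_i \cap H_1 \subset H_1 - l$, and then push slightly off $H_1$ into the opposite solid torus to preserve embeddedness. This yields an isotopy of $K$ in $M$ whose image lies in a collar $N(H_1) \cong T^2 \times I$ of $H_1$ and is disjoint from $l$. The projected immersed curve $\alpha_1 \cup \alpha_2$ lies in the annulus $A = H_1 - N(l)$ and has a well-defined winding number $n \in \mathbb{Z}$ around its core, so $[K] = n[l]$ in $H_1(H_1) \cong \mathbb{Z}^2$.

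If $l$ is longitudinal on $\partial V_1$ or $\partial V_2$, conclusion (4) holds by definition. If $l$ is meridional on some $\partial V_j$, then $[l] = 0$ in $\pi_1(V_j)$ (as $l$ bounds a meridian disk), so $[K] = n[l] = 0$ in $\pi_1(V_j)$, and hence in $\pi_1(M)$ via the inclusion-induced map. The Loop Theorem applied to a longitude of $K$ on $\partial N(K)$ then produces an embedded disk in $M$ bounded by $K$, so $K$ is trivial (conclusion (1)). The same conclusion applies when $n = 0$, since then $[K] = 0$ already in $\pi_1(H_1)$.

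The remaining case is that $l$ is neither meridional nor longitudinal on either side, and $n \ne 0$; here $l$ is itself a non-trivial torus knot in $M$. When $|n| = 1$, the class $[K] = \pm[l]$ is primitive, and by the classification of embedded simple loops in $T^2 \times I$ of given primitive slope (all isotopic to the standard horizontal representative), $K$ is isotopic in $N(H_1) \subset M$ to $l$; hence $K$ is a torus knot, giving conclusion (2). When $|n| \ge 2$, the loop $K$ realizes a non-trivial $(n,m)$-cable of $l$ lying on a parallel copy of $\partial N(l)$, and I would then verify that the companion torus $\partial N(l)$ is essential in $E(K) = M - N(K)$: incompressibility in $M - N(l)$ uses that $l$ is a non-trivial torus knot in $M$; incompressibility in $N(l) - N(K)$ follows because any meridian disk of $N(l)$ meets $K$ in $|n| \ge 2$ points and so fails to be a compressing disk of $\partial N(l)$ in $N(l) - N(K)$; non-boundary-parallelism holds since $N(l) - N(K) \not\cong T^2 \times I$ when $|n| \ne 1$. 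This yields conclusion (3). The main obstacle is this last essentiality verification for the companion torus in the $|n| \ge 2$ subcase, which requires a careful analysis of the cable space structure of $N(l) - N(K)$ and of the Seifert-fibered complement $M - N(l)$.
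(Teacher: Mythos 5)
There are genuine gaps at the three places where your argument tries to read off the knot type of $K$ from its homology class in the collar of $H_1$, and these are exactly the places where the paper has to do its real work. In the meridional case you conclude $K$ is trivial because its longitude dies in $\pi_1(M)$; but the Loop Theorem needs a nontrivial kernel of $\pi_1(\partial N(K))\to\pi_1(E(K))$, not of $\pi_1(\partial N(K))\to\pi_1(M)$ --- every knot in $S^3$ has longitude null-homotopic in $S^3$. (The paper instead chooses the meridian disk bounded by $l$ disjoint from a canceling disk and surgers $H_1$ along it, producing a $2$-sphere on which $K$ has a $1$-bridge diagram; triviality comes from the bridge structure.) Similarly, the $n=0$ and $|n|=1$ steps rest on a ``classification of embedded simple loops in $T^2\times I$ of given primitive slope'' that does not exist: the connected sum of the core with any local knot is homologous to, but not isotopic to, the core, and a null-homologous loop in $T^2\times I$ need not be unknotted in $M$. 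What actually forces conclusions (1) and (2) is the $1$-bridge structure $K=\alpha_1\cup\alpha_2$ with each $\alpha_i$ \emph{embedded} in the annulus $A$, and exploiting it is the content of the paper's Claim: setting $X=N(A\cup C_1\cup C_2)$, one shows by a lengthy innermost-circle/outermost-arc analysis that a $K$-compressing or meridionally compressing disk of $\partial X$ can be retaken to meet $A$ in a single arc $\delta$ with $|\delta\cap(\gamma_1\cup\gamma_2)|\le 1$; the disjoint case yields a $1$-bridge diagram on a disk (trivial knot) and the one-point case yields a torus knot. Your proposal has no substitute for this step.

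For $|n|\ge 2$ the description of $K$ as a cable of $l$ is also unjustified --- winding number $\ge 2$ in a solid torus does not place $K$ on a torus parallel to the boundary --- though the satellite conclusion is salvageable with companion torus $T=\partial X$ instead of $\partial N(l)$: $T$ is incompressible from outside because $l$ is non-meridional and non-longitudinal on both $\partial V_i$, incompressible from inside because a compressing disk there would be a meridian disk of $X$ missing $K$ (impossible for $n\ne 0$), and not boundary-parallel because $K$ is not a core of $X$. This core/not-core dichotomy would also repair $|n|=1$ into ``torus knot or satellite,'' but it cannot repair $n=0$, where $\partial X$ may genuinely compress on the inside and the bridge structure is indispensable. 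As written, then, the proposal establishes the longitudinal case (4) and little else; the trivial-knot and torus-knot cases, which constitute the bulk of the paper's proof, are not proved.
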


This lemma is the correction and detailed account 
of the two sentences right before Theorem 1.2 in \cite{Hy3}.

\begin{proof}
 There are an essential loop $\ell$ in $H_1$
and a canceling disk $C_i$ of $t_i$ in $(V_i, t_i)$
such that the arc $\gamma_i=\bdd C_i \cap H_1$ is disjoint from $\ell$
for $i=1$ and $2$.

 If $\ell$ is meridional on $\bdd V_1$ or $\bdd V_2$, say $\bdd V_1$,
then $\ell$ bounds a meridian disk $R$ of $V_1$
such that $R$ is disjoint from $C_1$.
Doing surgery $H_1$ along $R$,
we obtain a $2$-sphere
on which $K$ has a $1$-bridge diagram.
 This implies that $K$ is trivial.

 Hence, it is enough 
to show that the conclusion (1), (2) or (3) holds
when $\ell$ is non-meridional and non-longitudinal 
both on $\bdd V_1$ and on $\bdd V_2$.
 Let $N(\ell)$ be a very thin neighborhood of $\ell$ in $H_1$,
and $A=$cl\,$(H_1 - N(\ell))$ the complementary annulus.
 Then a regular neighborhood $X$ of $A \cup C_1 \cup C_2$ is a solid torus,
and we denote its boundary torus by $T$.
 $T$ is incompressible in cl\,$(M-X)$.
 If $T$ has neither a $K$-compressing disk 
nor a meridionally compressing disk in $(X, K)$,
then $K$ is a satellite knot.
 This is the conclusion (3).

 Hence we can assume that $T$ has 
a $K$-compressing or meridionally compressing disk $D$ in $(X, K)$.
 We can take $D$ 
so that $\bdd D$ intersects each component of $\bdd A$ 
transversely in a single point.
 We prove the next claim in the last three paragraphs in this proof.
 Recall that $\gamma_i = \bdd C_i \cap A$ is an arc for $i=1$ and $2$,
and $\gamma_1 \cup \gamma_2$ forms a $1$-bridge diagram of $K$ in $A$.

\noindent
{\bf Claim}.
{\it $K$ is the trivial knot,
or we can retake $D$, $C_1$ and $C_2$
so that $D$ intersects $A$ transversely in a single arc $\delta$
such that (A) $\delta$ is disjoint from $\gamma_1 \cup \gamma_2$
or (B) $\delta$ intersects $\gamma_1 \cup \gamma_2$
transversely in a single point.}

 In Case (A) of Claim, 
$K$ has a $1$-bridge diagram on the disk $A - \delta$,
and hence is the trivial knot,
which is the conclusion (1).
 In Case (B) of Claim,
we can assume, without loss of generality,
that $\delta$ intersects $\gamma_1$ in a single point.
 Then we can move $C_2$ by isotopy near $\gamma_2 \subset A-\ell$
so that $\gamma_2$ intersects $\gamma_1$
precisely at its endpoints $\bdd \gamma_2 = K \cap A$.
 This implies that $K$ is a torus knot,
and we obtain the conclusion (2).

 Now, we prove the claim.
 Let $X_i$ be the half solid torus $X \cap V_i$ for $i=1$ and $2$.
 We can take $D$
so that $D$ intersects $X_1$ in a meridian disk $D_1$ 
with $D_1 \cap A = \bdd D_1 \cap A = \delta$ an arc
and possibly peripheral disks $Q_i$ satisfying the following conditions:
(i) $\bdd Q_i$ is in $A$; 
and (ii) $Q_i$ cuts a ball containing $t_1$ from $X_1$.
 Moreover, we take $D$ 
so that the number $n_1$ of peripheral disk components 
of $D \cap X_1$ is minimal.
 Then the surface $D \cap X_2$ is $t_2$-incompressible in $(X_2, t_2)$.
 $D_1$ intersects $t_1$ transversely in at most one point.
 When $D \cap X_1$ does not have a peripheral disk,
$D_i = D \cap X_i$ is a disk for $i=1$ and $2$
such that $D_1 \cap D_2 = \delta$.
 As in the next paragraph, we can take $C_i$
so that it is disjoint from $D_i$ if $D_i$ is disjoint from $t_i$,
and intersects $D_i$ in a single arc connecting $t_i$ and $A$
if $D_i$ intersects $t_i$ in a single point.
 Then Claim follows.
 Note that $D \cap X_1$ has no peripheral disk components
if $D_1$ does intersect $t_1$ in a single point.

\begin{figure}[htbp]
\centering
\includegraphics[width=.4\textwidth]{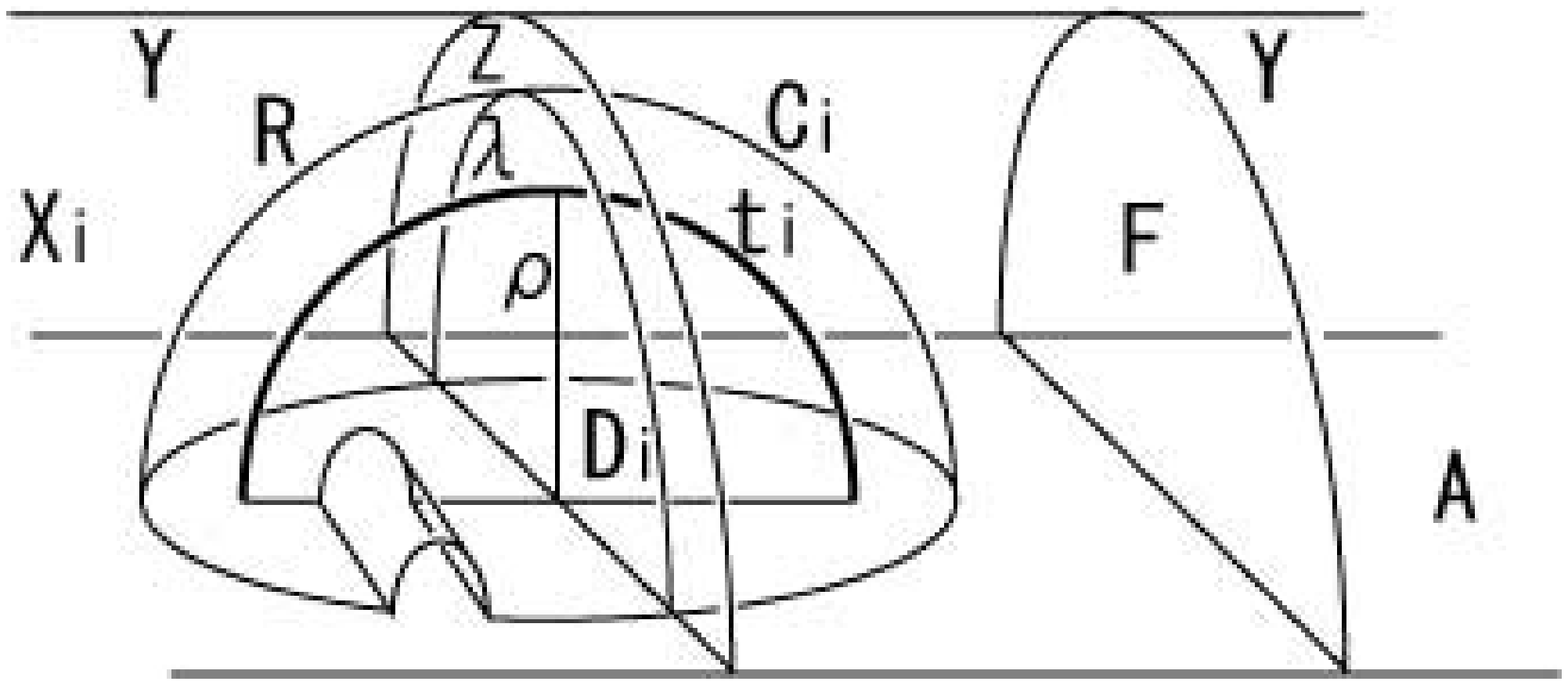}
\caption{}
\label{fig:satellitediagram2}
\end{figure}

 By a standard innermost circle argument,
we can take $C_i$ so that it intersects $D_i$ only in arcs.
 Note that $D_i$ contains an intersection arc, say $\rho$, 
connecting the point $D_i \cap t_i$ and the arc $\bdd D_i \cap A$
when $D_i$ intersects $t_i$ in a single point.
 By a standard outermost arc argument,
we can retake $C_i$ 
so that it is disjoint from $D_i$ if $D_i \cap t_i = \emptyset$,
and it intersects $D_i$ in $\rho$ and a parallel family of arcs
separating $\rho$ and the arc $\bdd D_i \cap \bdd X_i$
if $D_i$ intersects $t_i$ in a single point.
 In the former case, we are done.
 In the latter case,
if the family of arcs is empty, then we have obtained the desired situation.
 Hence we can assume that the family is non-empty.
 Let $E$ be an outermost disk
cut off from $C_i$ by an outermost intersection arc, 
say $\eta$, of $C_i \cap D_i$.
 We can take $E$ so that it is disjoint from $t_i$.
 $\eta$ divides $D_i$ into two disks, 
one of which, say $P$, is disjoint from $\rho$.
 Then the disk $F = E \cup P$ is a meridian disk of $X_i$
with $\bdd F$ intersecting the annulus $\bdd X_i \cap \bdd X$
in a single essential arc.
 We isotope $F$ slightly off of $D_i$ and $E$.
 Note that $F$ is disjoint from $t_i$.
 A standard outermost arc argument on $F$
allows us to deform $C_i$ to be disjoint from $F$.
 Let $R$ be the closure of a component of $C_i - D_i \cap C_i$ 
with $t_i \subset R$.
 We discard the other components.
 Let $\lambda$ be an arc of $(\bdd R) \cap D_i$ 
which is the outermost on $D_i$ among the arcs $(\bdd R) \cap D_i$.
 We can choose $\lambda$
so that its outermost disk, say $Z$, is disjoint from $\rho$.
 See Figure \ref{fig:satellitediagram2}.
 Let $Y$ be one of the disks
obtained by cutting the annulus $\bdd X_i \cap \bdd X$
along the arcs $\bdd D_i \cap \bdd X_i$ and $\bdd F \cap \bdd X_i$.
 We add the disk $Z \cup Y \cup F$ to $R$, 
and call the resulting disk $R$ again.
 If we choose $Y$ adequately,
then we can isotope $R$ near the disk $Z \cup Y \cup F$ off of $Z \cup Y \cup F$
so that the number of arcs $R \cap D_i$ is decreased.
 Repeating operations as above,
we obtain a canceling disk $C_i$ of $t_i$
which intersects $D_i$ only in the arc $\rho$.
 This completes the proof of Claim
in the case where $D \cap X_1$ has no peripheral components.

\begin{figure}[htbp]
\centering
\includegraphics[width=.8\textwidth]{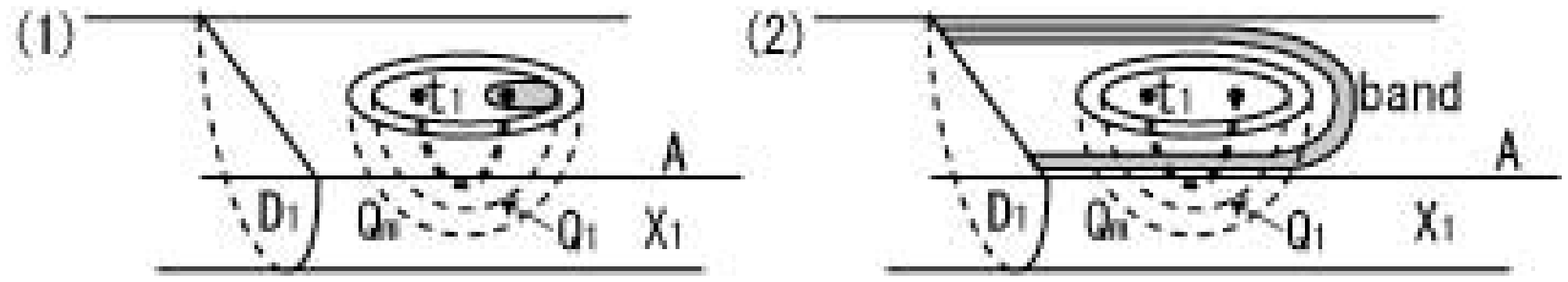}
\caption{}
\label{fig:SatelliteDiagram}
\end{figure}

 Thus we can assume 
that $D_1$ is disjoint from $t_1$
and $D \cap X_1$ has one or more peripheral disk components,
say $Q_1, Q_2, \ldots, Q_m$,
where $Q_1$ is the innermost one.
 Let $Q'_1$ be the disk in $A$ bounded by $\bdd Q_1$.
 We take $C_2$ 
so that it intersects the $t_2$-incompressible surface $D \cap X_2$
transversely in arcs and zero loops.
 Let $n_2$ denote the number of intersection arcs of $C_2 \cap (D \cap X_2)$.
 We retake $D$ and $C_2$
so that the pair $(n_1, n_2)$ is minimal lexicographically.
 If $n_2=0$,
then $K$ has a $1$-bridge diagram in $Q'_1$,
and hence is the trivial knot.
 Hence we can assume $n_2 > 0$.
 Let $e$ be one of the outermost intersection arcs in $C_2$.
 We isotope $D$ near $e$ along the outermost disk.
 If $e$ has an endpoint in $t_2$,
then $Q_1$ is deformed into a peripheral disk
intersecting $t_1$ transversely in a single point,
we can isotope it into $X_2$
to reduce the number of peripheral disks of $D \cap X_1$
(Figure \ref{fig:SatelliteDiagram} (1)).
 This is a contradiction.
 Hence we can assume that $e$ has both endpoints in $A$.
 Then, after the isotopy, a band is attached to the surfaces $D \cap X_1$.
 If the band connects $D_1$ and $Q_m$,
then they are deformed into a meridian disk of $X_1$,
and the number of peripheral disks is reduced.
 If the band connects two peripheral disks,
then they are deformed into a single peripheral disk,
which can be isotoped out of $X_1$ into $X_2$ 
without intersecting $K$.
 This operation reduces the number of peripheral disks.
 If the band is attached to $Q_1$,
then it is deformed into an annulus
which has a meridionally compressing disk $G$ in $(X_1, t_1)$.
 Then $D$ is a meridionally compressing disk of $T$
rather than a $K$-compressing disk of $T$.
Doing surgery $D$ along the meridionally compressing disk $G$,
we obtain a new meridionally compressing disk of $T$
and a $2$-sphere intersecting $K$ in two points.
 We discard this $2$-sphere.
 Then $Q_1$ is deformed 
into a peripheral disk intersecting $t_1$ in a single point.
 We can isotope it into $X_2$ to decrease the number of peripheral disks.
 Thus we can assume 
that the band is attached to $D_1$.
 Then it is deformed into an annulus
one of whose boundary loop is parallel to $\bdd Q_m$ in $A$.
 See Figure \ref{fig:SatelliteDiagram} (2).
 Hence the annulus has a $t_1$-compressing disk in $X_1$.
 We do surgery on $D$ along the $t_1$-compressing disk.
 Then the annulus is deformed 
into a disjoint union of a peripheral disk and a disk isotopic to $D_1$,
and $D$ is deformed into a disk and a $2$-sphere.
 We discard this $2$-sphere, which contains the new peripheral disk.
 This operation does not change the number of peripheral disks,
but decreases the number of intersection arcs of $C_2$ and $D \cap X_2$,
which is a contradiction.
\end{proof}

Since the trivial knot is either a core knot or split knot, 
we have the next proposition by 
Proposition \ref{prop:KReducible} and Lemma \ref{lem:satellitediagram}.

\begin{proposition}\label{prop:satellitediagram}
Suppose $(M,K)$ has a $(1,1)$-splitting $H_1$ and $(2,0)$-splitting $H_2$. 
Further, we suppose that $H_1$ admits a satellite diagram. Then 
one of the following holds: 
\begin{enumerate}
\renewcommand{\labelenumi}{(\theenumi)}
\item
the $(2,0)$-splitting $H_2$ is $K$-reducible; 
\item
the knot $K$ is the torus knot;
\item
the knot $K$ is a satellite knot;
\item 
the $(1,1)$-splitting $H_1$ admits a satellite diagram of a longitudinal slope. 
\end{enumerate}
\end{proposition}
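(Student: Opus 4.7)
The plan is to derive Proposition \ref{prop:satellitediagram} as a direct combination of Lemma \ref{lem:satellitediagram} and Proposition \ref{prop:KReducible}, using the observation (stated by the authors just before the proposition) that a trivial knot is always either a core knot or a split knot. The only real content beyond quoting the two earlier results is handling the ``trivial knot'' case, which is the situation that does not appear verbatim in the list of conclusions we want.

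First I would apply Lemma \ref{lem:satellitediagram} to the $(1,1)$-splitting $H_1$, which by hypothesis admits a satellite diagram. This yields four possible outcomes. Three of them, namely that $K$ is a torus knot, that $K$ is a satellite knot, or that $H_1$ admits a satellite diagram of longitudinal slope, are literally conclusions (2), (3), (4) of the proposition, so in those cases there is nothing more to prove.

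The remaining case is conclusion (1) of Lemma \ref{lem:satellitediagram}, that $K$ is the trivial knot. Here I would invoke the observation recorded just above the proposition: the trivial knot is either a core knot or a split knot. (If one wanted a self-contained justification, one considers a regular neighborhood $B$ of $K$ together with a spanning disk; $B$ is a $3$-ball and $E(K)$ decomposes along $\partial B$ into a solid torus and the complement of $B$ in $M$, so $E(K)$ is either a solid torus or reducible.) Then Proposition \ref{prop:KReducible} immediately gives that the $(2,0)$-splitting $H_2$ is $K$-reducible, which is conclusion (1).

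There is no serious obstacle here, since all the real work has been done in Lemma \ref{lem:satellitediagram}; the statement is essentially a clean repackaging that replaces ``$K$ is trivial'' with the more useful ``$H_2$ is $K$-reducible'' so that the four conclusions are stated in a uniform language mixing properties of the splittings and of the knot. The only thing one must be careful about is that Proposition \ref{prop:KReducible} was stated for arbitrary $M$ and detects $K$-reducibility for both core knots and split knots, which is exactly what is needed to cover the trivial-knot case in both $S^3$ and lens spaces.
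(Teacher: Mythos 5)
Your proposal is correct and follows exactly the paper's own (one-sentence) argument: apply Lemma \ref{lem:satellitediagram}, note that the trivial knot is either a core knot or a split knot, and convert that case into conclusion (1) via Proposition \ref{prop:KReducible}. Your parenthetical justification of the core-knot/split-knot dichotomy for the trivial knot is a reasonable filling-in of the detail the paper leaves implicit.
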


 Further, we will use the next proposition by T. Kobayashi.

\begin{proposition}[Proposition 3.4 in \cite{K2}]\label{thm:koba}
 Let $M$ be a closed orientable 3-manifold, and $L$ a link in $M$.
 Assume that $M$ has a 2-fold branched cover
with branch set $L$.
 Let $H_i$ be $g_i$-genus $n_i$-bridge splitting of $(M,L)$
for $i=1$ and $i=2$,
and $W$ a genus $g_2$ handlebody bounded by $H_2$ in $M$.
 Suppose that $H_1$ is contained in the interior of $W$,
and that there is
an $L$-compressing or meridionally compressing disk $D$ of $H_2$
in $(W, L \cap W)$ with $D \cap H_1 = \emptyset$.
 Then either
$(i)$ $M = S^3$ and $L = \emptyset$ or $L$ is the trivial knot,
or $(ii)$ the splitting $H_2$ is weakly $L$-reducible.
\end{proposition}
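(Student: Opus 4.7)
My plan is to lift everything through the $2$-fold branched cover $p \colon \tilde{M} \to M$ with branch set $L$ and covering involution $\tau$, since both $L$-compressing and meridionally compressing disks of $H_2$ pull back to ordinary compressing disks upstairs. Write $\tilde{H}_i = p^{-1}(H_i)$, $\tilde{W} = p^{-1}(W)$, and $\tilde{L} = p^{-1}(L)$. A standard Riemann--Hurwitz calculation shows that each $\tilde{H}_i$ is an honest Heegaard surface of $\tilde{M}$ of genus $2g_i + n_i - 1$ and that $\tilde{W}$ is a handlebody with $\partial \tilde{W} = \tilde{H}_2$. The preimage $\tilde{D}$ of $D$ is either a single disk (when $D$ meets $L$ in one point, since the branch point merges the two sheets) or a pair of disjoint parallel disks (when $D \cap L = \emptyset$); in either case each component $\tilde{D}_0$ is an ordinary compressing disk of $\tilde{H}_2$ in $\tilde{W}$ disjoint from $\tilde{H}_1$.

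I would next reduce matters to a purely Heegaard-theoretic claim: if a Heegaard surface $\tilde{H}_1$ of $\tilde{M}$ lies in the interior of the handlebody $\tilde{W}$ bounded by another Heegaard surface $\tilde{H}_2$, and some compressing disk of $\tilde{H}_2$ in $\tilde{W}$ is disjoint from $\tilde{H}_1$, then either $\tilde{M} = S^3$ or $\tilde{H}_2$ is weakly reducible. The idea is that $\tilde{H}_1$ cuts $\tilde{W}$ into an inner handlebody $X_2$ and a collar-type compression body $X_1$ with $\partial X_1 = \tilde{H}_1 \cup \tilde{H}_2$, and that $\tilde{D}_0$ is forced to lie in $X_1$. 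Take any compressing disk $E$ of $\tilde{H}_1$ in the handlebody $\tilde{C}_2 = X_1 \cup_{\tilde{H}_2} \tilde{W}'$ on the far side of $\tilde{H}_1$, where $\tilde{W}' = \tilde{M} \setminus \mathrm{int}\,\tilde{W}$. A standard innermost-disk/outermost-arc minimization of $E \cap \tilde{H}_2$, using copies of $\tilde{D}_0$ in $X_1$ to cap off innermost disks landing on the $X_1$ side, eventually yields a compressing disk of $\tilde{H}_2$ in $\tilde{W}'$ whose boundary is disjoint from $\partial \tilde{D}_0$, realizing weak reducibility of $\tilde{H}_2$. If this minimization collapses entirely, then $\tilde{H}_2$ bounds a ball in $\tilde{W}'$ and $\tilde{M} = S^3$.

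To descend to $M$, I would invoke equivariant Dehn/loop-lemma techniques of Meeks--Yau, arranging that the two weakly reducing disks of $\tilde{H}_2$ be either interchanged by $\tau$ or individually $\tau$-invariant, while their boundaries remain disjoint. Their images in $M$ then meet $L$ transversely in $0$ or $1$ points (according as the disk upstairs is freely permuted by $\tau$ or contains an interior fixed point), and thus form a disjoint pair of $L$-compressing or meridionally compressing disks of $H_2$ on the two sides, giving conclusion (ii). In the exceptional case $\tilde{M} = S^3$, the involution $\tau$ on $S^3$ with quotient $M$ branched over $L$ must, by the positive solution of the Smith conjecture together with the classification of involutions of $S^3$, have $L$ either empty or a trivially embedded circle, yielding conclusion (i). The main technical obstacle is this equivariant descent: one has to guarantee both that the Meeks--Yau disks project with the correct incidence ($\le 1$ point) with $L$ and that the disjointness of their downstairs boundaries is not destroyed by the equivariant surgeries and isotopies.
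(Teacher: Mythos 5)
First, a point of context: the paper does not prove this statement at all --- it is quoted verbatim as Proposition~3.4 of Kobayashi's paper \cite{K2}, so there is no in-paper proof to compare yours against. Your overall strategy (lift everything to the $2$-fold branched cover, where $g_i$-genus $n_i$-bridge surfaces become genus $2g_i+n_i-1$ Heegaard surfaces and where $L$-compressing and meridionally compressing disks become honest compressing disks, prove a statement about nested Heegaard surfaces upstairs, then descend by equivariant surgery) is the natural explanation for the branched-cover hypothesis and is consistent with the Kobayashi--Saeki framework that this paper relies on; the bookkeeping in your first paragraph (connected single disk over a once-punctured $D$, two disjoint copies over a disk missing $L$) is correct.

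The genuine gap is that the ``purely Heegaard-theoretic claim'' in your second paragraph is the entire content of the proposition, and your sketch of it does not amount to a proof. Two specific problems. (1) You assert that $\tilde H_1$ cuts $\tilde W$ into an inner handlebody and a ``collar-type compression body'' $X_1$; but the region between a Heegaard surface and the boundary of a handlebody containing it is not a compression body a priori --- establishing any structure on $X_1$ is part of what must be proved. (2) The minimization of $E\cap\tilde H_2$ is where the argument actually has to produce a compressing disk of $\tilde H_2$ \emph{in $\tilde W'$ with boundary disjoint from $\partial\tilde D_0$}. An innermost disk of $E\cap\tilde H_2$ lying in $\tilde W'$ need not have boundary disjoint from $\partial\tilde D_0$, and an innermost disk lying in $X_1$ with $t$-essential boundary gives a compressing disk on the \emph{wrong} side; ``capping off with copies of $\tilde D_0$'' neither reduces $|E\cap\tilde H_2|$ in an evident way nor converts such a disk into one in $\tilde W'$. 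Some additional idea (e.g.\ cutting $\tilde W$ along $\tilde D_0$ and exploiting that the resulting lower-genus handlebody contains the inner handlebody of $\tilde H_1$, or a Haken/Casson--Gordon-type argument) is needed here, and you have not supplied it. By contrast, the equivariant descent you flag as the main obstacle is comparatively routine: an invariant compressing disk with boundary off the branch locus carries an involution conjugate to a rotation (hence projects to a disk meeting $L$ in exactly one point), and a swapped pair projects to a disk disjoint from $L$; this is standard Meeks--Yau/Kim--Tollefson material, though you would still need to check that the projected boundaries remain $L$-essential in $H_2$ and disjoint from each other. As it stands the proposal identifies the right framework but leaves the central step unproved.
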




\section{General case}

 Recall that $M$ is the 3-sphere or a lens space ($\neq S^2\times S^1$)
and $K$ is a knot in $M$.
 Let $H_1$ be a torus giving a 1-genus 1-bridge splitting
$(M,K)=(V_1,t_1)\cup_{H_1}(V_2,t_2)$,
and $H_2$ a genus 2 surface giving a $(2,0)$-splitting
$(M, K) = (W_1, K) \cup_{H_2} (W_2, \emptyset)$.
 We assume that $H_1$ and $H_2$ intersect transversely
in non-empty collection of finitely many loops
each of which is $K$-essential both in $H_1$ and in $H_2$.
 If a loop $l$ of $H_1\cap H_2$ is inessential in $H_1$
(ignoring the intersection points $K \cap H_1$),
then by the definition of $K$-essentiality
$l$ bounds a disk $D$ in $H_1$
such that $D$ intersects $K$ in two points.
 The torus $H_1$ contains
zero or even number of essential loops of $H_1 \cap H_2$
since $H_2$ is separating in $M$.

 We consider in this section the $\lq\lq$general case"
where $H_1 \cap H_2$ contains a parallel family of three loops
in $H_2$.
 If $H_1 \cap H_2$ consists of seven or more loops,
then $H_1 \cap H_2$ contains such a family
because the surface $H_2$ of genus two contains
at most three essential loops
which are pairwise non-parallel and disjoint.

 We can apply
precisely the same argument as in Section 4 in \cite{Hy3}
to this case.
 There
two $1$-genus $1$-bridge splittings $H_1$ and $H_2$ of $(M, K)$
are considered,
and $H_1 \cap H_2$ contains a family of three loops on $H_2 - K$.
 Then we obtain the next proposition
which is similar to Proposition 4.1 in \cite{Hy3} 
via Proposition \ref{prop:satellitediagram}.
 We omit the proof.

\begin{proposition}\label{prop:general}
 Suppose that the intersection $H_1 \cap H_2$ contains
a parallel family of three loops in $H_2$.
then at least one of the conditions $(a)\sim(e)$ of 
Theorem \ref{thm:general} holds. 

\end{proposition}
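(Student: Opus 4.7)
The plan is to imitate Section 4 of \cite{Hy3}, where an analogous intersection pattern was analyzed for two $(1,1)$-splittings, replacing the role played there by the second solid torus by the handlebody arguments developed in Section 3 for the $(2,0)$-splitting. Pick three parallel loops $\ell_1, \ell_2, \ell_3$ of $H_1 \cap H_2$ on $H_2$, and let $A, A'$ be the two annuli cobounded in $H_2$ by the consecutive pairs $(\ell_1, \ell_2)$ and $(\ell_2, \ell_3)$. Since $K \cap H_2 = \emptyset$ and each $\ell_i$ is $K$-essential on $H_2$, both $A$ and $A'$ are disjoint from $K$. First I would isotope $H_1$ to remove any loops of $H_1 \cap H_2$ lying in $\mathrm{int}\,A \cup \mathrm{int}\,A'$ that are $K$-inessential in $H_1$; each such reduction realizes conclusion $(a)$. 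After this normalization, $A$ and $A'$ are annular pieces of $H_2 - H_1$ properly embedded in one of $(V_1, t_1)$, $(V_2, t_2)$, and there are accompanying pieces of $H_1 - H_2$ in the handlebodies $W_1, W_2$.

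The main step is a case analysis on these pieces. For the annuli $A, A'$ in $V_i$ I would apply Lemmas \ref{lem:H} and \ref{lem:annulus}; for the pieces of $H_1 - H_2$ in $W_j$ I would apply Lemma \ref{lem:SurfaceIn20}; and for any companion pair-of-pants or four-holed sphere pieces that appear when the $\ell_i$ are non-separating, Lemmas \ref{lem:pants}, \ref{lem:pants2}, and \ref{lem:4-holes}. Whenever a piece is $t_i$- or $K$-compressible, meridionally compressible, or $t_i$-$\partial$- or $K$-$\partial$-compressible, I would either combine the resulting disk with a neighboring piece to perform an isotopy that decreases $|H_1 \cap H_2|$ (giving conclusion $(a)$), or pair it with a disk on the opposite side of the splitting to exhibit weak $K$-reducibility of $H_1$ or of $H_2$ (conclusions $(b)$ or $(c)$). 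If instead every piece is incompressible and $\partial$-incompressible, the classifications of Lemmas \ref{lem:H} and \ref{lem:SurfaceIn20} force $A$, $A'$, and the pieces of $H_1 - H_2$ into standard forms, and together with canceling disks $C_1, C_2$ of $t_1, t_2$ they assemble either a satellite diagram on $H_1$ or an essential torus in the exterior of $K$.

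An essential torus in the exterior of $K$ directly gives conclusion $(e)$. A satellite diagram, via Proposition \ref{prop:satellitediagram}, outputs either $K$-reducibility of $H_2$ (hence conclusion $(c)$ by Proposition \ref{prop:20weak}), a torus knot $(d)$, a satellite knot $(e)$, or a longitudinal satellite diagram; in this last case the third parallel loop $\ell_3$ supplies an additional annulus adjacent to a canceling disk that promotes the longitudinal diagram to an explicit compressing or meridionally compressing disk of $H_2$ disjoint from a compressing disk in $W_2$, landing again in $(c)$. The main obstacle, and the reason this case merits its own section, is the sheer bookkeeping: subcases are indexed by whether $\ell_2$ is separating on $H_2$, by the distribution of $A, A'$ between $V_1$ and $V_2$, and by the position of the endpoints $\partial t_i$ relative to the pieces of $H_1 - H_2$. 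The key technical gain from having three rather than two parallel loops is precisely the extra annulus, which is what allows conclusion $(f)$ to be excluded from the output in this general case, matching the statement of the proposition.
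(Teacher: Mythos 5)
Your proposal takes essentially the same route as the paper: the authors omit the proof of this proposition entirely, saying only that one repeats the argument of Section 4 of \cite{Hy3} (where two $(1,1)$-splittings meet in three loops parallel on one of them) and then invokes Proposition \ref{prop:satellitediagram}, which is exactly the plan you describe, down to the same supporting lemmas from Sections 2 and 3. The one substantive assertion you make that the paper does not record --- that the extra annulus coming from the third parallel loop is what converts a would-be longitudinal satellite diagram into weak $K$-reducibility of $H_2$, so that conclusion $(f)$ of Theorem \ref{thm:general} can be omitted here --- cannot be checked against the paper, since the authors give no details on that point either.
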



 We will consider in the following sections the case
where $H_1 \cap H_2$ does not contain
such a parallel family of three loops,
and $H_1 \cap H_2$ is composed of six or less number of loops.

\section{When $|H_1\cap H_2|=3$}

 We consider in this section the case
where $H_1$ and $H_2$ intersect each other
in three loops each of which is $K$-essential both in $H_1$ and in $H_2$.
%

In $H_1$,
the intersection loops $H_1 \cap H_2$ consists of
either (I) two parallel essential loops and a single inessential loop
or (II) three parallel inessential loops.
In both cases,
every inessential loop is $K$-essential in $H_1$,
and bounds a disk intersecting
$K$ in two points.
By Proposition \ref{prop:general} we may assume that
in $H_2$,
the intersection $H_1 \cap H_2$ consists of
either (A) three non-separating loops no pair of which is parallel,
or (B)
an essential separating loop
and two parallel essential non-separating loops.
In Case (A),
the three intersection loops divide
$H_2$
into two disks with two holes.
In Case (B),
they divide
$H_2$
into an annulus, a disk with two holes and a torus with one hole.
See Figure \ref{fig:l=3}.

\begin{figure}[htbp]
\centering
\includegraphics[width=.7\textwidth]{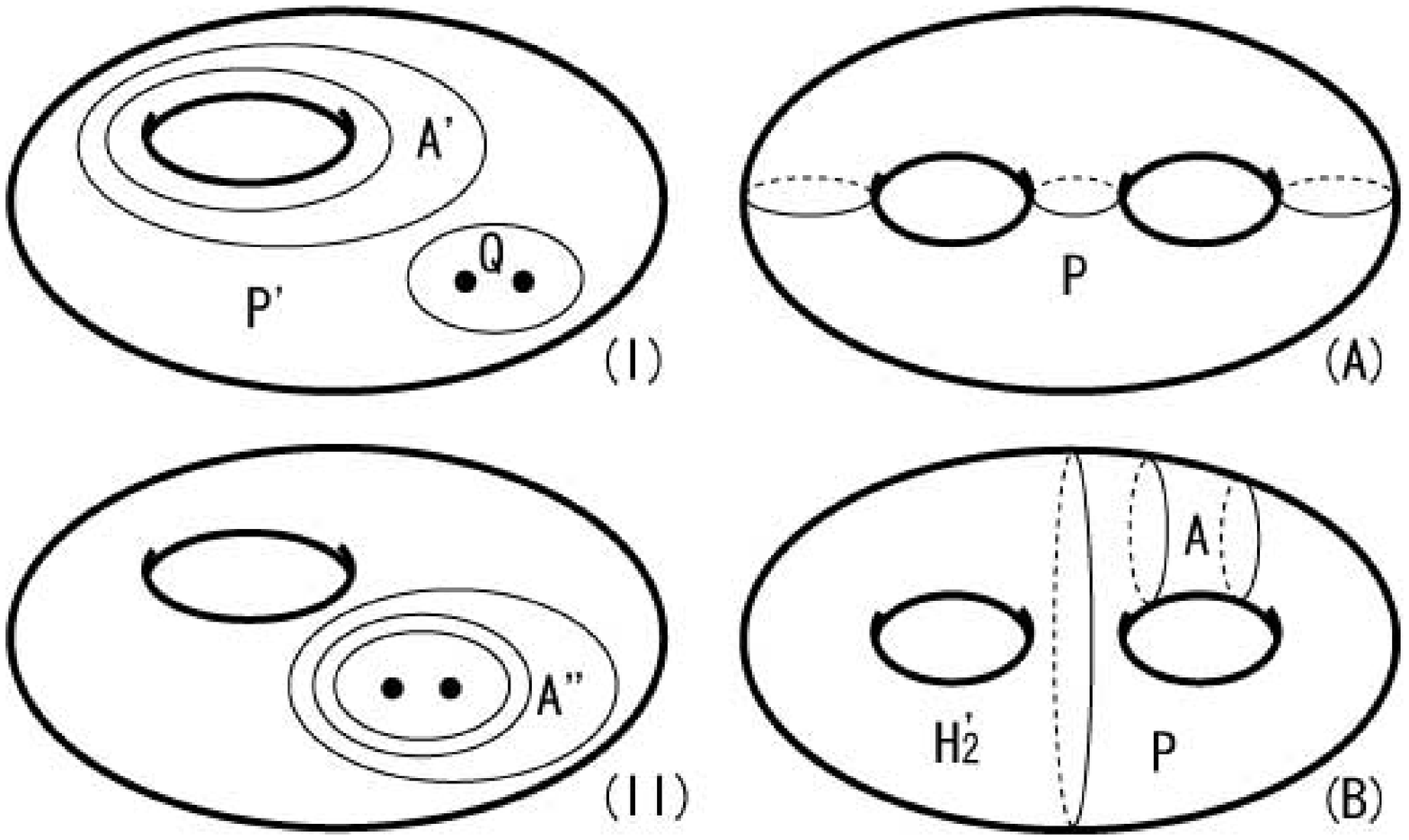}
\caption{}
\label{fig:l=3}
\end{figure}

In order to prove Theorem \ref{thm:general} (2),
we prepare the next lemma.

\begin{lemma}\label{lem:Pcompressible}
If, for $i=1$ or $2$,
the intersection $H_2 \cap V_i$ contains a component $P$
that is a disk with two holes,
then either
$(i)$ we can isotope $H_2$ in $(M, K)$
so that $H_1$ and $H_2$ intersect each other
in smaller number of loops
each of which is $K$-essential both in $H_1$ and in $H_2$,
or
$(ii)$ $P$ is $t_i$-compressible or meridionally compressible in $(V_i,
t_i)$.
Hence we can assume that $P$ satisfies $(ii)$
to prove Theorem \ref{thm:general} $(2)$.
\end{lemma}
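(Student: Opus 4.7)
The plan is to apply Lemma \ref{lem:pants} to the pants component $P \subset V_i$ and to promote the $t_i$-$\bdd$-compression it provides into an ambient isotopy of $H_2$ that reduces $|H_1 \cap H_2|$. First, we note that $P$ is disjoint from $t_i$ (since $K \subset W_1$ gives $K \cap H_2 = \emptyset$), and each component of $\bdd P$ is a loop of $H_1 \cap H_2$, hence $K$-essential in $H_1$; because $K \cap H_1 = \bdd t_i$, this is equivalent to $t_i$-essential in $\bdd V_i$. So Lemma \ref{lem:pants} yields two alternatives: either $P$ is $t_i$-compressible or meridionally compressible in $(V_i, t_i)$, which is conclusion (ii) of the present lemma; or there is a $t_i$-$\bdd$-compressing disk $D$ for $P$ whose arc $\alpha = \bdd D \cap P$ joins two distinct boundary components $\ell_1, \ell_2$ of $P$, and the annulus $P'$ obtained by $\bdd$-compressing $P$ along $D$ has every boundary component $t_i$-essential in $\bdd V_i$.

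In this latter case we use $D$ to guide an ambient isotopy of $H_2$ that pushes a band-neighborhood of $\alpha$ in $H_2$ across $D$ through $H_1$ along $\beta = \bdd D \cap H_1$. The net effect replaces $P$ (in $V_i$) by the annulus $P'$ and amalgamates the two loops $\ell_1, \ell_2$ into a single loop $\ell'$, leaving the third boundary component $\ell_3$ of $P$ untouched; hence $|H_1 \cap H_2|$ drops from three to two, which yields conclusion (i) provided the new intersection loops are $K$-essential in both surfaces.

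It remains to verify that both $\ell_3$ and $\ell'$ are $K$-essential in $H_1$ and in $H_2$. The loop $\ell_3$ is unchanged. On the $H_1$ side, $\ell'$ is one of the boundary components of $P'$, hence $t_i$-essential (equivalently, $K$-essential) in $\bdd V_i$ by Lemma \ref{lem:pants}(2). On the $H_2$ side, since $K \cap H_2 = \emptyset$, being $K$-essential is the same as being essential; the key observation is that inside the pants $P \subset H_2$, the band sum of two boundary circles along an interior arc is always isotopic in $P$ to the remaining boundary circle. Indeed, a regular neighborhood of $\ell_1 \cup \alpha \cup \ell_2$ in $P$ is a sub-pants whose third boundary is $\ell'$, while the complementary region in $P$ is an annulus cobounding $\ell'$ with $\ell_3$. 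So $\ell'$ is isotopic in $H_2$ to the essential loop $\ell_3$, and therefore essential on $H_2$. The only delicate point in the whole argument is this last essentiality check on $H_2$; the pants fact above handles it uniformly and avoids a case analysis over the subcases (A), (B) of the intersection pattern.
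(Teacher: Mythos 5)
Your proof is correct and follows essentially the same route as the paper: invoke Lemma \ref{lem:pants} on the pants $P$, take conclusion (ii) in the compressible case, and otherwise isotope $H_2$ along the $t_i$-$\bdd$-compressing disk, with $K$-essentiality of the new loops in $H_1$ coming from Lemma \ref{lem:pants}(2). Your explicit justification of essentiality on the $H_2$ side (the banded loop $\ell'$ cobounds the annulus $P'$ with $\ell_3$ in the new $H_2$, hence is isotopic to the essential loop $\ell_3$) is a correct expansion of what the paper dispatches in one sentence.
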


\begin{proof}
$H_2 \cap V_i$ consists of the single component $P$
in both cases (A) and (B).
 By Lemma \ref{lem:pants},
either
(i)$'$ $P$ has a $t_i$-$\bdd$-compressing disk $D$ in $(V_i, t_i)$
such that the arc $\bdd D \cap P$
connects two distinct components of $\bdd P$
and
that the boundary loops of the annulus
obtained by $t_1$-$\bdd$-compressing along $D$
are $K$-essential in
$H_1$,
or
(ii)$'$ $P$ is $t_i$-compressible or meridionally compressible in
$(V_i,t_i)$.
 The latter case (ii)$'$ is precisely the conclusion (ii) of this lemma.
 In the former case (i)$'$,
we obtain the conclusion (i)
by isotoping
$H_2$ along
$D$.
Note that after this isotopy
the intersection loops $H_1 \cap H_2$ are $K$-essential in $H_1$.
The intersection $H_2 \cap V_i$ is deformed into an annulus,
and a band is attached to 
$H_2 \cap V_j$,
where $\{ i,j \} = \{ 1,2 \}$.
Hence the intersection loops $H_1 \cap H_2$
are also $K$-essential in $H_2$ after this isotopy.
\end{proof}


\medskip

\noindent{\bf Proof of Theorem \ref{thm:general} (2).}

{\bf Case A.}
Let $P_i$ be the disk with two holes $H_2 \cap V_i$ for $i=1$ and $2$.
 By Lemma \ref{lem:Pcompressible},
we
may assume
that $P_i$ is $t_i$-compressible or meridionally compressible
in $(V_i,t_i)$
for $i=1$ and $2$.
 We perform $t_i$-compression or meridionally compression
on $P_i$ for $i=1$ and $2$,
to obtain a $t_i$-compressing or meridionally compressing disk
of $H_1$ on both sides of $H_1$.
This shows that
$H_1$ is weakly $K$-reducible
since these disks are bounded by the loops of $H_1 \cap H_2$.


{\bf Case B.}
We obtain from $H_2$ an annulus $A$,
a disk with two holes $P$
and a torus with one hole $H'_2$
by cutting along the three loops $H_1 \cap H_2$.
 We may assume without loss of generality
that $P$ is in $V_1$.
Then $A$ and $H_2'$ are in $V_2$.
 By Lemma \ref{lem:Pcompressible},
we can assume that $P$ is $t_1$-compressible or meridionally compressible
in $(V_1, t_1)$,
and compressing operation on a copy of $P$ yields a disk $D_1$
which is bounded by a loop of $\bdd P = H_1 \cap H_2$
and intersects the arc $t_1$ transversely in at most one point.
By Lemma \ref{lem:H},
$A$ is $t_2$-compressible or $t_2$-$\bdd$-compressible
in $(V_2, t_2)$.

 In the former case,
we perform a $t_2$-compressing operation on $A$,
to obtain a disk $D_2$
which is bounded by a loop of $H_1 \cap H_2$
and is disjoint from the arc $t_2$.
 Then the disks $D_1$ and $D_2$ show
that
$H_1$ is weakly $K$-reducible.
 This is the conclusion (b) of this theorem.
Thus we may assume that $A$ has a $t_2$-$\bdd$-compressing disk $D$
in $(V_2, t_2)$.

 First, we consider Case (I).
 Precisely one of the three loops $H_1 \cap H_2$
is inessential in $H_1$.
 If one component of $\bdd A$ is inessential in $H_1$,
then the annulus $A$ is $t_2$-compressible by Lemma \ref{lem:annulus}.
 Then, by compressing $P$ and $A$, we can again see
that $H_1$ is weakly $K$-reducible.
 Therefore,
we may suppose that every component of $\bdd A$ is essential in $H_1$.
 Let $A'$, $P'$ and $Q$ 
be the annulus, the disk with two holes and the disk
obtained by cutting $H_1$ along $H_1 \cap H_2$.
$Q$ intersects $K$ in two points.
 If $A$ is parallel to to $A'$ in $(V_2, t_2)$, 
then we have the conclusion (a).
Suppose not.
By $t_2$-$\bdd$-compressing $A$ along $D$, 
we obtain a disk $G$ 
such that $G$ is disjoint from $t_2$, 
$\bdd G \cap \bdd A = \emptyset$ and $\bdd G$ is inessential in $H_1$.
Since $A$ is not $\bdd$-parallel,
$\bdd G$ is $t_2$-essential in $H_1$,
and hence $\bdd G \subset H_1 - A'$.
 If $\bdd D_1$ is essential in $H_1$, 
then $G$ and $D_1$ show $H_1$ is weakly $K$-reducible.
 This is the conclusion (b).
 We may assume that $\bdd D_1$ is inessential in $H_1$.
 Then $\bdd D_1 = \bdd Q$ and 
$D_1$ is parallel to $Q$ ignoring $t_1$.
 Since $Q$ intersects $K$ in two points,
$D_1$ is disjoint from $t_1$.
Thus $D_1$ and $G$ are disjoint from $\bdd A$,
and show that $H_1$ has a satellite diagram.
By Proposition \ref{prop:satellitediagram}, 
we have the conclusion $(c)\sim (f)$.


We consider Case (II).
The three loops $H_1 \cap H_2$ are inessential in $H_1$,
and parallel in $H_1 - K$.
Recall that $D$ is a $t_2$-$\bdd$-compressing disk of $A$
in $(V_2,t_2)$.
 Since the arc $\bdd D \cap H_1$
connects distinct components of $\bdd A$,
it is in an annulus component, say $A''$, of $H_1 -  \bdd A$.
Note that $A''$ is disjoint from the two endpoints $\bdd t_2$.
Hence the annulus $A$ is $t_2$-parallel to $A''$ in $(V_2, t_2)$.
If int$A''$ does not contain the boundary loop $\bdd H'_2$,
then we can isotope $H_2$ along the parallelism between $A$ and $A''$,
to reduce the number of intersection loops $H_1 \cap H_2$.
We obtain the conclusion $(a)$ of this theorem.
If int$A''$ contains the boundary loop $\bdd H'_2$,
then the solid torus of parallelism between $A$ and $A''$
entirely contains the torus with one hole $H'_2$
whose boundary loop is not null-homotopic in the solid torus.
 Thus we obtain a contradiction.

 This completes the proof of Theorem \ref{thm:general} (2).
$\qed$


\section{$|H_1\cap H_2|= 4(A), 5$ or $6$}

 We consider in this section
the cases $|H_1\cap H_2|= 5, 6$
and the subcase (A) of the case $|H_1\cap H_2|= 4$ as below
simultaneously.
 See Figures \ref{fig:l=4}, \ref{fig:l=5} and \ref{fig:l=6}.
 In these cases,
the closures of the components of $H_2- (H_1\cap H_2)$
contains an annulus and a disk with two holes $P$.
 In the other subcase (B) of the case $|H_1 \cap H_2|=4$,
the four intersection loops
consists of two parallel pairs of loops
and each loop is non-separating in $H_2$.
 Recall that,
after Proposition \ref{prop:general},
we are under the assumption
that $H_1 \cap H_2$ does not contain
a parallel family of three loops in $H_2$.

\begin{figure}[htbp]
\centering
\includegraphics[width=.7\textwidth]{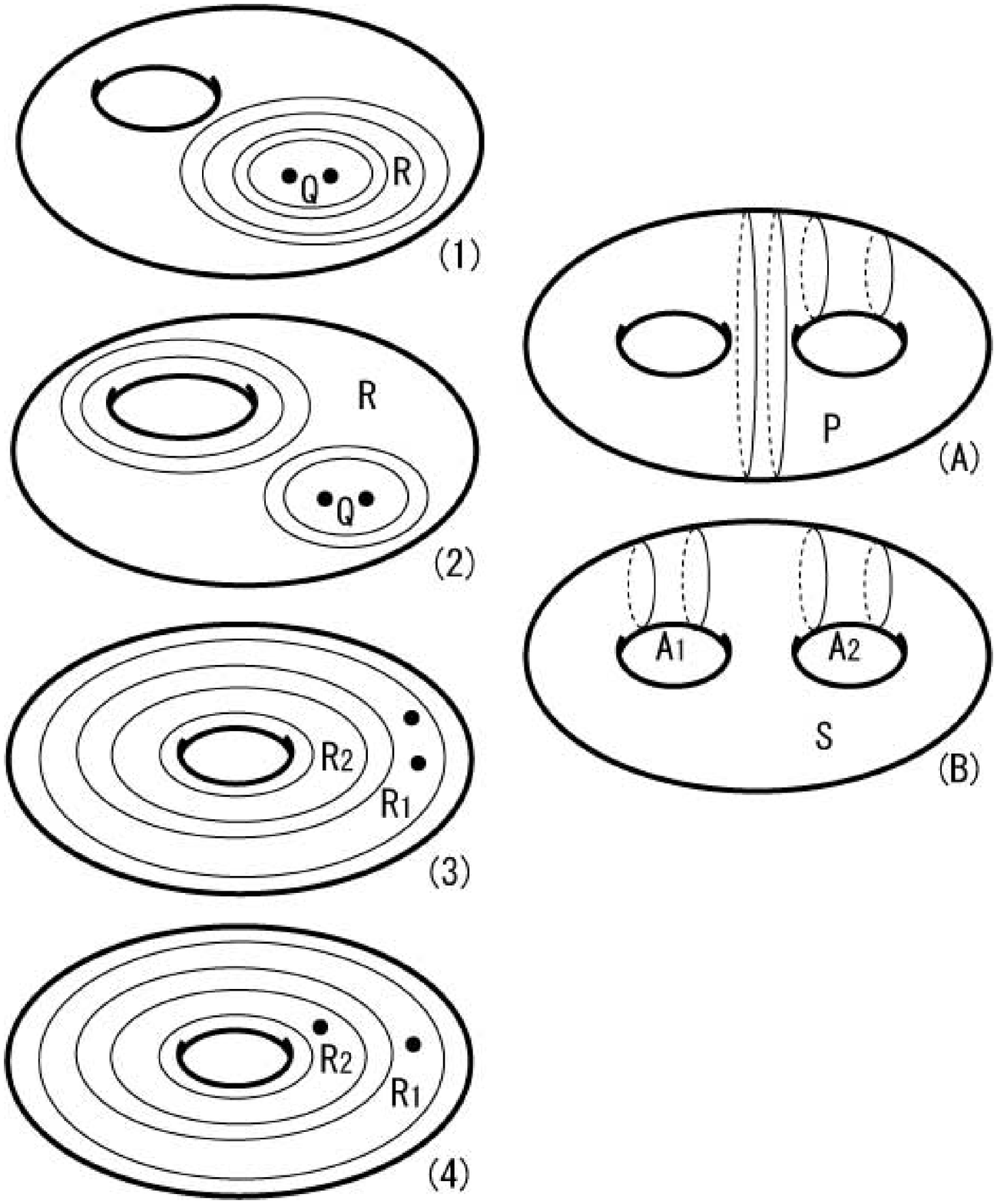}
\caption{}
\label{fig:l=4}
\end{figure}

\begin{figure}[htbp]
\centering
\includegraphics[width=.4\textwidth]{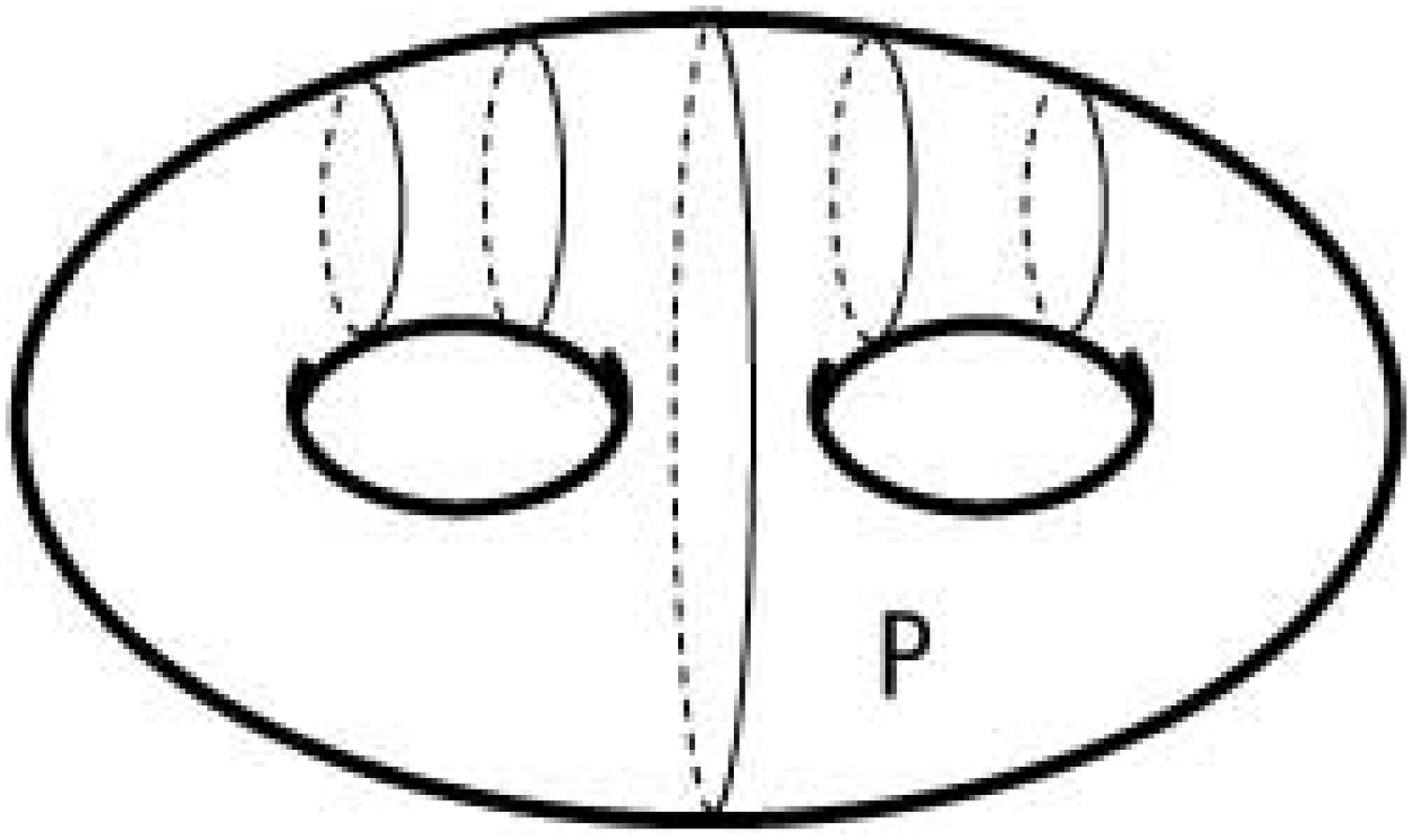}
\caption{}
\label{fig:l=5}
\end{figure}

\begin{figure}[htbp]
\centering
\includegraphics[width=.7\textwidth]{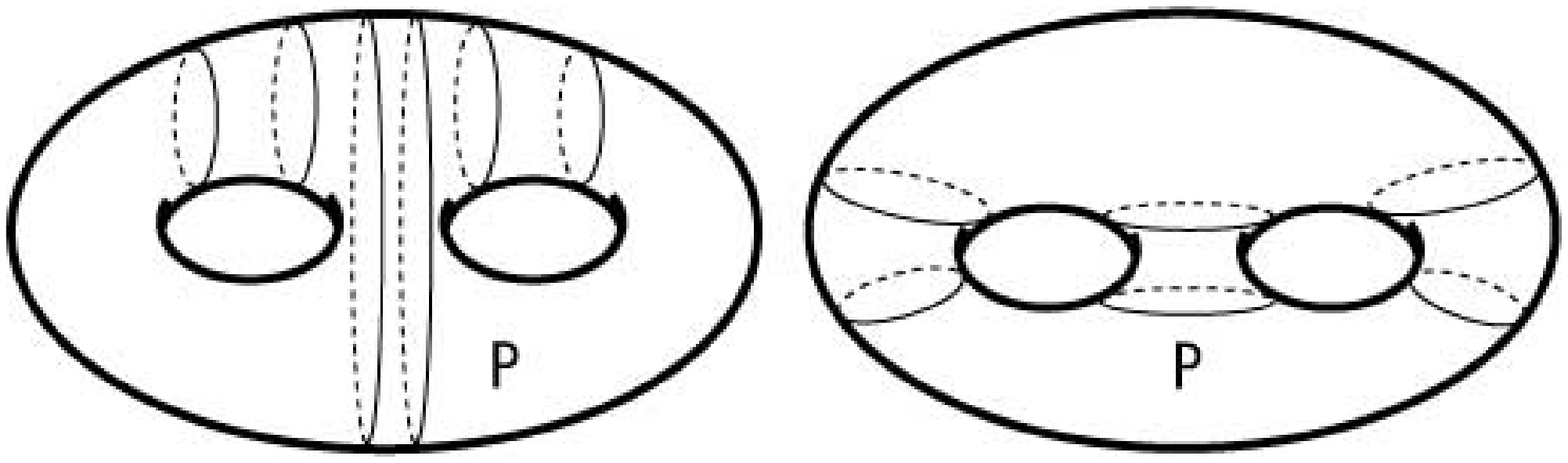}
\caption{}
\label{fig:l=6}
\end{figure}

Our goal in this section is the next proposition.

\begin{proposition}\label{prop:456}
 Under the above condition,
the conclusion $(a)$ or $(b)$ of Theorem \ref{thm:general} holds.
\end{proposition}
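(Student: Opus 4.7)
The plan is to exploit the structure of the decomposition $H_2 - (H_1 \cap H_2)$ in these three configurations: in each case one can read off from Figures \ref{fig:l=4}, \ref{fig:l=5} and \ref{fig:l=6} that there is an annulus component $A$ and a disk-with-two-holes component $P$, and moreover these lie on opposite sides of $H_1$. Write $P \subset V_i$ and $A \subset V_j$ with $\{i,j\}=\{1,2\}$. The idea is to extract a $t_i$- or meridionally compressing disk of $H_1$ on the $V_i$ side from $P$, and a $t_j$-compressing disk of $H_1$ on the $V_j$ side from $A$, with boundaries on distinct intersection loops (hence disjoint on $H_1$); this shows $H_1$ is weakly $K$-reducible, giving conclusion $(b)$. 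The alternative in the analysis is always that some $\bdd$-compression isotopes $H_2$ so as to strictly reduce $|H_1 \cap H_2|$ while preserving $K$-essentiality, giving conclusion $(a)$.

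First I would apply Lemma \ref{lem:pants} to $P$ in $(V_i, t_i)$. Every component of $\bdd P$ is a loop of $H_1 \cap H_2$, hence $K$-essential and in particular $t_i$-essential in $\bdd V_i$, so the lemma yields either (i) $P$ is $t_i$-compressible or meridionally compressible in $(V_i, t_i)$, or (ii) there is a $t_i$-$\bdd$-compressing disk $Q$ of $P$ whose arc $\bdd Q \cap P$ joins two distinct components of $\bdd P$ and whose $\bdd$-compression produces an annulus with $K$-essential boundary. In case (ii), pushing $H_2$ across $Q$ merges the two adjacent loops of $H_1 \cap H_2$ into one; since the new loop is $K$-essential in $H_1$ by Lemma \ref{lem:pants} and the isotopy is supported in a small strip (so essentiality in $H_2$ persists), we obtain conclusion $(a)$. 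In case (i), we extract a disk $D_i \subset V_i$ with $\bdd D_i$ a loop of $H_1 \cap H_2$ and $D_i$ either disjoint from $t_i$ or meeting $t_i$ transversely once.

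Next I would treat $A$ in $(V_j, t_j)$ via Lemma \ref{lem:H}: since an annulus does not appear in the list $(1)$--$(6)$, $A$ is $t_j$-compressible or $t_j$-$\bdd$-compressible. If $A$ is $t_j$-compressible, surgering along a $t_j$-compressing disk produces two disks each bounded by a component of $\bdd A$, each disjoint from $t_j$, so we get $t_j$-compressing disks of $H_1$. If $A$ is only $t_j$-$\bdd$-compressible, I would run the same argument used for the annulus in Case II (and the corresponding annulus analysis in Case I) of Section 6: the $\bdd$-compressing arc lies in some component of $\bdd V_j - \bdd A$, and either $A$ is $\bdd$-parallel to an annulus $A''$ of $H_1 - (H_1 \cap H_2)$ in which case the parallelism isotopy reduces $|H_1 \cap H_2|$ (using that the standing assumption after Proposition \ref{prop:general} forbids three parallel loops in $H_2$, so $A''$ cannot swallow another intersection-region's boundary without contradiction), or the $\bdd$-compression itself reduces the intersection count and yields conclusion $(a)$, or we uncover a $t_j$-compression of $A$.

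Assembling the steps: if neither analysis falls into the intersection-reducing subcase $(a)$, we have $D_i$ on the $V_i$ side and $D_j$ on the $V_j$ side of $H_1$, each a $t_\ast$- or meridionally compressing disk, with $\bdd D_i$ and $\bdd D_j$ distinct loops of $H_1 \cap H_2$ (coming from boundaries of $P$ and $A$ respectively) and therefore disjoint on $H_1$; this is exactly the definition of $H_1$ being weakly $K$-reducible, conclusion $(b)$. The principal obstacle will be the careful case analysis for the annulus $A$ when it is only $t_j$-$\bdd$-compressible: one must enumerate in which component of $H_1 - (H_1 \cap H_2)$ the $\bdd$-compressing arc lies, verify in each subcase that the resulting isotopy genuinely reduces $|H_1 \cap H_2|$ and keeps the remaining loops $K$-essential both in $H_1$ and in $H_2$, and rule out the $\bdd$-parallel configuration using the no-three-parallel-loops hypothesis imposed after Proposition \ref{prop:general}.
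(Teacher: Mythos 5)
Your overall strategy coincides with the paper's: extract from $P$ a $t$-compressing or meridionally compressing disk of $H_1$ on one side and, from the annuli, a $t$-compressing disk on the other side, with boundaries among the loops of $H_1\cap H_2$, falling back to a reduction of $|H_1\cap H_2|$ otherwise. But there is a genuine gap at the step where you ``push $H_2$ across $Q$'' in case (ii) of Lemma \ref{lem:pants}. The $t_i$-$\bdd$-compressing disk $Q$ of $P$ supplied by that lemma is only required (Definition \ref{def:bdd-comp}) to have interior disjoint from $P$ itself and from $\bdd V_i=H_1$; nothing prevents $\text{int}\,Q$ from meeting the \emph{other} components of $H_2\cap V_i$ (a torus with one hole when $\ell=4$(A), an annulus when $\ell=5$, a second disk with two holes when $\ell=6$). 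Hence the isotopy of $H_2$ along $Q$ is not available as stated, and the claim that it is ``supported in a small strip'' is unjustified. Handling exactly this is the bulk of the paper's argument (Lemma \ref{lem:456}): after disposing of the case where $H_2\cap V_1$ is $t_1$-compressible, one arranges $D\cap(H_2\cap V_1)$ to consist of arcs, passes to an outermost disk $D'$, and runs a case analysis according to which component of $H_2\cap V_1$ the disk $D'$ is incident to and in which component of $F-H_2$ the arc $\bdd D'\cap\bdd V_1$ lies; several subcases yield a compressing disk rather than a reduction of $|H_1\cap H_2|$. Your proposal locates ``the principal obstacle'' in the annulus analysis, but the real difficulty sits here, in the pants analysis.

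A secondary, fixable issue of the same kind: on the other side you apply Lemma \ref{lem:H} to a single annulus $A$, so the resulting $t_j$-$\bdd$-compressing disk may again meet the remaining annuli of $H_2\cap V_j$. The paper sidesteps this by applying Lemma \ref{lem:H} to the whole union $H_2\cap V_2$ (which, in the subcases where no second disk with two holes occurs, consists entirely of annuli), so the $\bdd$-compressing disk is disjoint from all of $H_2\cap V_2$ by definition; the resulting disk $C'$ is then either $\bdd$-parallel, giving conclusion $(a)$, or a $t_2$-compressing disk of $H_1$ that can be pushed off $H_1\cap H_2$ and paired with the disk from the $V_1$ side to give conclusion $(b)$. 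Until the pants step is repaired along these lines, the proof is incomplete.
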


 To prove this proposition, we need the next lemma.
 We may assume without loss of generality
that the disk with two holes $P$ is contained
in the solid torus $V_1$ bounded by $H_1$.

\begin{lemma}\label{lem:456}
 One of the following occurs:
\begin{enumerate}
\renewcommand{\labelenumi}{(\theenumi)}
\item
a loop of $H_1\cap H_2$ bounds
in $(V_1,t_1)$
a $t_1$-compressing disk or a meridionally compressing disk
$($the interior of which may intersect $H_2)$; or
\item
the conclusion $(a)$ of Theorem \ref{thm:general} holds.

\end{enumerate}
\end{lemma}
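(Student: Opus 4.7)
The plan is to apply Lemma \ref{lem:pants} to the disk-with-two-holes $P$ properly embedded in $V_1$. Its hypotheses follow routinely: $P \subset H_2$ is disjoint from $K$ (hence from $t_1$, since $K \subset \text{int}\, W_1$), and each boundary loop of $P$ is a component of $H_1 \cap H_2$, whose $K$-essentiality in $H_1$ supplies the required $t_1$-essentiality in $\partial V_1$ exactly as in the analogous step of Lemma \ref{lem:Pcompressible}. In the first case of Lemma \ref{lem:pants}, $P$ is $t_1$-compressible or meridionally compressible in $(V_1,t_1)$; any witnessing disk $D$ has $\partial D$ essential in the pants $P$, hence parallel in $P$ to one of its boundary loops $\ell \subset H_1 \cap H_2$. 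Extending $D$ across the annulus in $P$ between $\partial D$ and $\ell$ and pushing that annular portion slightly off $H_2$ into $V_1$ yields a disk $\tilde{D} \subset V_1$ with $\partial\tilde{D} = \ell$ that $t_1$-compresses or meridionally compresses $H_1$. Its interior may meet other components of $H_2 \cap V_1$, which our lemma explicitly permits; conclusion (1) follows.

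In the second case of Lemma \ref{lem:pants}, there is a $t_1$-$\partial$-compressing disk $D$ of $P$ whose arc $\partial D \cap P$ joins two distinct boundary loops $c_1, c_2$ of $P$, and such that the annulus $P'$ obtained by $\partial$-compression has boundary $K$-essential in $H_1$. I would use $D$ to perform the standard ambient isotopy of $H_2$ supported near $D$, which replaces $P \subset H_2 \cap V_1$ by $P'$ and attaches a dual band (pushed slightly into $V_2$) to $H_2 \cap V_2$. Denoting the third boundary loop of $P$ by $c_3$, the three loops $c_1, c_2, c_3$ of $H_1 \cap H_2$ are replaced by the two loops $\partial P' = c' \cup c_3$, where $c'$ is the band sum of $c_1, c_2$ along $\beta = \partial D \cap H_1$. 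Thus $|H_1 \cap H_2|$ drops by one but stays nonempty (since $l \ge 4$), and Lemma \ref{lem:pants} directly gives the $K$-essentiality of the new loops in $H_1$.

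The main obstacle is to verify that $c'$ remains $K$-essential---equivalently, essential, since $K \cap H_2 = \emptyset$---in the new $H_2$. I would argue by contradiction: if $c'$ bounded a disk $\Delta$ in the new $H_2$, then either $P' \subset \Delta$ (so $c_3 \subset \Delta$ bounds a sub-disk) or $\Delta$ lies on the side of $c'$ opposite $P'$ (so $\Delta \cup P'$ is a compact surface of Euler characteristic $1$ bounded by the single circle $c_3$, hence a disk). Either way $c_3$ would bound a disk in the new $H_2$. But the isotopy is supported near $D$ and fixes $c_3$ pointwise, and $c_3$ was $K$-essential (hence essential, being disjoint from $K$) in the old $H_2$, so it remains essential in the new $H_2$---a contradiction. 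Hence $c'$ is essential, and we obtain conclusion $(a)$ of Theorem \ref{thm:general}, which is conclusion (2) of our lemma.
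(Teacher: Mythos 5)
Your treatment of the first alternative of Lemma \ref{lem:pants} is fine and agrees with the paper: an essential loop in the pair of pants $P$ is boundary-parallel, so a ($t_1$- or meridionally) compressing disk of $P$ extends across an annulus of $P$ to a disk bounded by a loop of $H_1\cap H_2$, whose interior is allowed to meet $H_2$; this is conclusion (1).

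The second half has a genuine gap. In the cases treated here ($|H_1\cap H_2|=4(\mathrm{A}),5,6$), $P$ is \emph{not} the only component of $H_2\cap V_1$ (unlike the $\ell=3$ situation of Lemma \ref{lem:Pcompressible}); there are further annulus, pants, or torus-with-one-hole components of $H_2\cap V_1$. The $t_1$-$\bdd$-compressing disk $D$ supplied by Lemma \ref{lem:pants} satisfies only $D\cap(P\cup\bdd V_1)=\bdd D$ and $D\cap t_1=\emptyset$: its interior may intersect the other components of $H_2\cap V_1$, and the arc $\beta=\bdd D\cap H_1$ may cross the other loops of $H_1\cap H_2$ lying in the interior of the complementary piece $F$ of $H_1-\bdd P$. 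Consequently the ``standard ambient isotopy of $H_2$ supported near $D$'' that your argument rests on is not available: it would drag $H_2$ through itself. This is exactly the difficulty the paper's proof is organized around. After first disposing of the case where $F$ is an annulus containing $\bdd t_1$ (where Lemma \ref{lem:annulus} forces $P$ to be $t_1$-compressible, giving conclusion (1)), the paper assumes $H_2\cap V_1$ is $t_1$-incompressible, arranges $D\cap(H_2\cap V_1)$ to consist of arcs, and passes to an outermost subdisk $D'$ of $D$; it then case-analyzes according to which component of $H_2\cap V_1$ the disk $D'$ is incident to and whether $\beta'=\bdd D'\cap\bdd V_1$ lies in an annulus or a pants component of $F-H_2$. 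Only in some of these subcases does one get an honest isotopy reducing $|H_1\cap H_2|$; in the others one instead produces a compressing or meridionally compressing disk bounded by a loop of $H_1\cap H_2$, i.e.\ conclusion (1). To repair your proof you would need to supply this reduction to an outermost disk (or otherwise show $D$ can be made disjoint from $H_2\setminus P$, which is not true in general). Your final paragraph on the essentiality of the new loops in $H_2$ is sensible but moot until the isotopy itself is legitimate.
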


\begin{proof}

 $P$ satisfies the conclusion (1) or (2) of Lemma \ref{lem:pants}
in $(V_1, t_1)$.
 In the former case, 
we obtain the conclusion (1) of this lemma. 
 In the latter case, let $D$ and $P'$ be the disk and the annulus 
in the conclusion (2) of Lemma \ref{lem:pants}.
 Let $\alpha = \bdd D \cap P$ and $\beta = \bdd D \cap \bdd V_1$.
 Let $F$ be one of the surfaces
obtained by cutting $H_1$ along the boundary loops $\bdd P$
such that $F$ contains the arc $\beta$.
 Since $\alpha$ connects two distinct components of $\bdd P$,
$F$ has at least two boundary components.
After $\bdd$-compression along $D$,
the boundary loops 
$\bdd P'$ are $t$-essential,
hence
$F$ is either a disk with two holes disjoint from $t_1$,
or an annulus which contains the two endpoints $\bdd t_1$.
Figure \ref{fig:456-2} indicates typical examples.
 If $F$ is an annulus containing the two endpoints $\bdd t_1$,
then each component of $\bdd P$ is essential in $H_1$.
 Hence a boundary component of the annulus 
$P'$ is essential,
and the other boundary component of 
$P'$ is inessential
in $\bdd V_1$.
 Lemma \ref{lem:annulus} shows that 
$P'$ is $t_1$-compressible.
 Then $P$ is also $t_1$-compressible in $(V_1, t_1)$
before the $\bdd$-compression along $D$.
 This is the conclusion (1).

\begin{figure}[htbp]
\centering
\includegraphics[width=.7\textwidth]{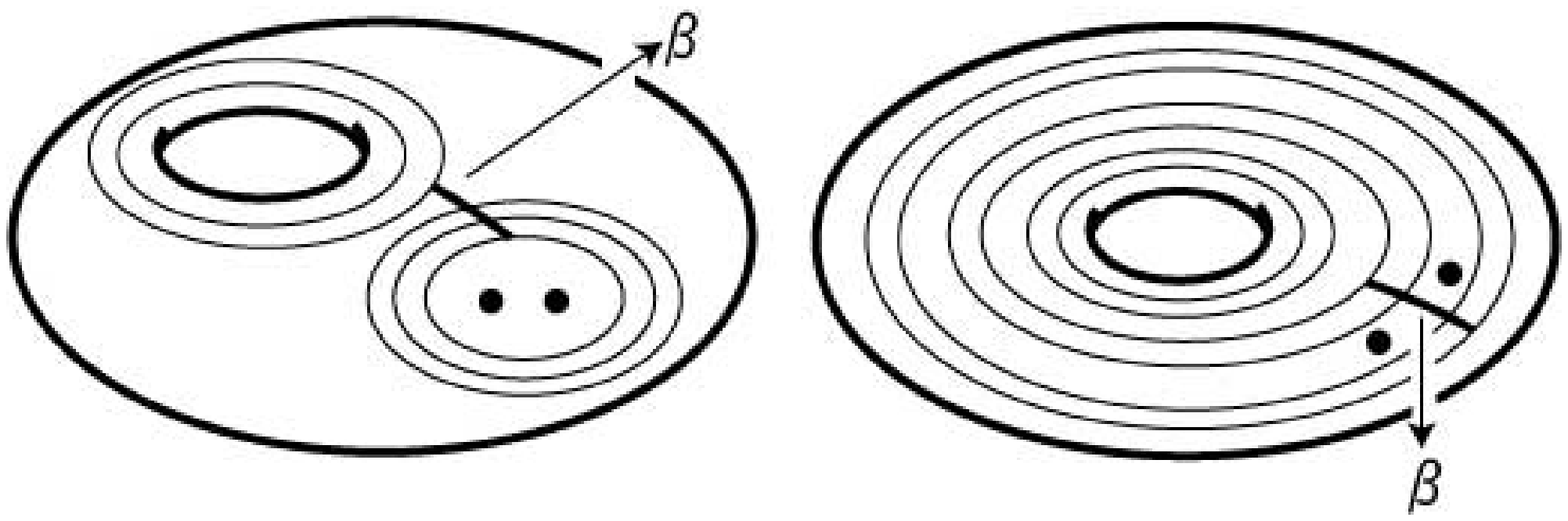}
\caption{}
\label{fig:456-2}
\end{figure}

Hence we may assume that $F$ is a disk with two holes.
Note that
$H_2$ may intersect $\text{int}\,F$.
By an adequate isotopy of
$D$ near
$\beta$,
we can move $D$ so that $\beta$ intersects
each component of $H_1\cap H_2$ in at most one point.
 Note that each component of $H_2 \cap V_1$ is
a torus with one hole, a disk with two holes or an annulus.
Therefore,
if $H_2 \cap V_1$ is $t_1$-compressible in $(V_1,t_1)$,
then we obtain the conclusion (1).
We may suppose
that $H_2\cap V_1$ is $t_1$-incompressible in $(V_1,t_1)$.
Then,
by a standard cut and paste argument,
we can retake the $t_1$-$\bdd$-compressing disk $D$ of $P$
so that each component of $D\cap(H_2\cap V_1)$ is an arc.
 Let $D'$ denote an outermost disk
which is cut off from $D$
by an outermost arc of $D\cap(H_2\cap V_1)$.
 Set $\beta'=\bdd D'\cap\bdd V_1$.
 The arc $\beta'$ connects distinct loops of $H_1\cap H_2$
since $\beta'\subset\beta$.
 Hence $D'$ is not incident
to a torus with one hole component of $H_2 \cap V_1$.

 Assume first that $\text{int}\,\beta'$ is contained
in a disk with two holes component of $F - H_2$.
 If $D'$ is incident to an annulus component of $H_2\cap V_1$,
we can obtain a disk $D''$ by $\bdd$-compression along $D'$
such that $D''$ is disjoint from
$t_1$.
 Note that $\bdd D''$ is parallel to a loop of $H_1 \cap H_2$
in $H_1 - \bdd t_1$.
 Thus we obtain the conclusion (1).
 If not,
$D'$ is incident to a disk with two holes component of $H_2 \cap V_1$.
 By isotoping $H_2$ along the disk $D'$,
we can decrease the number of intersection curves $H_1 \cap H_2$
by one.
 Thus we have the conclusion (2).

 Assume that $\text{int}\,\beta'$ is contained
in an annulus component $A$ of $F - H_2$.
 Note that $A$ is disjoint from the endpoints $\bdd t_1$
since $A \subset F$.
 If $D'$ is incident to an annulus component of $H_2 \cap V_1$,
then the annulus can be moved into $V_2$ by an isotopy.
 Thus we can cancel two intersection loops of $H_1\cap H_2$, 
and obtain the conclusion (2).
 If $D'$ is incident to a disk with two holes component $J$
of $H_2 \cap V_1$,
then we obtain a $t_1$-compressing disk of $J$
by a `$\bdd$-compression' on a copy of
$A$ along $D'$.
 This contradicts the assumption
that $H_2 \cap V_1$ is $t_1$-incompressible in $(V_1,t_1)$.

 This completes the proof of Lemma \ref{lem:456}.
\end{proof}

\noindent{\bf Proof of Proposition \ref{prop:456}.}
 If the conclusion (2) of Lemma \ref{lem:456} occurs,
then we are done.
 Suppose that the conclusion (1) of Lemma \ref{lem:456} occurs.
 That is, a loop of $H_1\cap H_2$ bounds a disk $E$ in $(V_1,t_1)$
such that 
$E$ intersects $t_1$ transversely in at most one point.

 If $H_2\cap V_2$ also contains a disk with two holes component
 (this is possible only in the case of $|H_1\cap H_2|=5$),
then by the same argument as in the proof of Lemma \ref{lem:456},
we obtain the conclusion that a loop of $H_1\cap H_2$ bounds a disk
$E'$ in $(V_2,t_2)$
such that 
$E'$ intersects $t_2$ transversely in at most one point.
 The disks $E$ and $E'$ show
that
$H_1$ is weakly $K$-reducible.
 This is the conclusion (1) of this proposition.

 If not,
$H_2\cap V_2$ consists of two annuli
(in Subcase (A) in the case of $|H_1\cap H_2|=4$)
or three annuli
(in the case of $|H_1\cap H_2|=6$).
 By Lemma \ref{lem:H},
$H_2 \cap V_2$ is $t_2$-compressible or $t_2$-$\bdd$-compressible
in $(V_2, t_2)$.
 In the former case,
by compressing $H_2 \cap V_2$ we obtain a disk $C$
which is disjoint from $t_2$
and
bounded by a loop of $H_1 \cap H_2$.
 Hence the disks $E$ and $C$ show that
$H_1$ is weakly $K$-reducible again.
This is the conclusion (1).
We consider the latter case.
 Let $A$ be the annulus
to which a $t_2$-$\bdd$-compressing disk of $H_2 \cap V_2$
is incident.
 We can obtain a disk $C'$ from $A$ by $t_2$-$\bdd$-compression.
 If $C'$ is parallel to a subdisk of $\bdd V_2$ in $V_2 - t_2$,
then $A$ is also parallel to a subsurface of $\bdd V_2$
in $V_2 - t_2$,
and the parallelism intersects $H_2$ only in
$A$.
 We can isotope
 $H_2$
along this parallelism
to cancel the two intersection loops $\bdd A$, 
and obtain the conclusion (2).
 Hence we can assume
that $C'$ is a $K$-compressing disk of $H_1$ in $(V_2, t_2)$.
 We can isotope $C'$ slightly
so that $\bdd C'\cap(H_1\cap H_2)=\emptyset$
since $C'$ is obtained by $\bdd$-compression on $H_2 \cap V_2$.
 Hence the disks $E$ and $C'$ show
that
$H_1$ is weakly $K$-reducible.
This is the conclusion (1).
$\qed$

\section{$|H_1\cap H_2|=4$, Case (B)}

 We consider in this section 
Subcase (B) of the case $|H_1\cap H_2|= 4$.
See Figure \ref{fig:l=4}.
In this case, 
$H_{2}-(H_1\cap H_2)$
consists of a disk with three holes $S$
and two annuli $A_1$ and $A_2$.


\begin{proposition}\label{prop:4}
 Under the above condition,
the conclusion $(a), (b)$ or $(c)$ of Theorem \ref{thm:general} holds.
\end{proposition}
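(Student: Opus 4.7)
The plan is to analyze the two sides of $H_1$ separately using the structural lemmas of Section 2 and then combine the resulting compressing data to force one of the conclusions (a), (b), (c).

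First, I locate the pieces of $H_2 - H_1$. Since traversing any loop of $H_1 \cap H_2$ in $H_2$ switches the side of $H_1$, the components $S, A_1, A_2$ alternate in a bipartite fashion along these loops. Since $S$ has four boundary loops while $A_1 \cup A_2$ has four (two each) and each $A_j$ has only two, $S$ must share exactly two boundary loops with each $A_j$; consequently $S$ lies on one side of $H_1$ while $A_1 \cup A_2$ lies on the other. Without loss of generality, $H_2 \cap V_1 = S$ and $H_2 \cap V_2 = A_1 \cup A_2$.

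Next, I apply Lemma \ref{lem:H} to each side. Neither a disk with three holes nor an annulus appears in the list of basic incompressible pieces, so $S$ is $t_1$-compressible, meridionally compressible, or $t_1$-$\bdd$-compressible in $(V_1, t_1)$, and each $A_j$ is $t_2$-compressible or $t_2$-$\bdd$-compressible in $(V_2, t_2)$. In the compressible or meridionally compressible subcases, I band the resulting disk with the adjacent subannulus of $S$ or $A_j$ to produce a disk in $V_i$ whose boundary is a loop of $H_1 \cap H_2$ meeting $t_i$ in at most one point, i.e., a candidate $t_i$-compressing or meridionally compressing disk of $H_1$. In the $\bdd$-compressible subcases, I apply Lemma \ref{lem:4-holes} to $S$ and arguments parallel to those in the proof of Lemma \ref{lem:456} to the $A_j$'s; each outcome either reduces to the compressible case (via Lemma \ref{lem:pants2} and Lemma \ref{lem:BoundaryCompressible} to control meridional compressions) or produces an isotopy of $H_2$ that reduces $|H_1 \cap H_2|$ while preserving $K$-essentiality, yielding conclusion (a).

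Finally, I pair the disks obtained on the two sides. If the two boundary loops on $H_1$ can be chosen disjoint, we obtain conclusion (b): $H_1$ is weakly $K$-reducible. When disjointness fails and both disks are forced to bound the same loop of $H_1 \cap H_2$, gluing them along their common boundary produces a $2$-sphere in $M$ meeting $K$ in zero or two points, or a disk in $M$ meeting $K$ in at most one point; combined with the $(2,0)$-splitting structure and Proposition \ref{prop:KReducible}, this gives compressing data for $H_2$ on both sides and establishes conclusion (c).

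The main obstacle is the bookkeeping needed to choose the banding boundaries on $H_1$ disjointly. The four loops of $H_1 \cap H_2$ are arranged in two parallel pairs on $H_2$ but may sit in a complicated configuration on $H_1$, and the candidate compressing disks coming from $S$ and from the $A_j$'s must be matched so their boundaries are disjoint. The delicate subcase is when every candidate compressing disk from the annular side is forced to share its boundary loop with the disk from the $S$ side; there the banded sphere/disk construction described above is indispensable for producing conclusion (c) rather than (b).
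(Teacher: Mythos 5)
Your setup (the bipartite placement of $S$ and $A_1\cup A_2$ on the two sides of $H_1$, and the application of Lemma \ref{lem:H} to each side) matches the paper's, and the part of your argument that produces conclusion (a) or (b) is in the right spirit. But there is a genuine gap in how you propose to reach conclusion (c). Conclusion (c) is weak $K$-reducibility of the $(2,0)$-splitting $H_2$, which requires a $K$-compressing or meridionally compressing disk of $H_2$ in $W_1$ together with a compressing disk of $H_2$ in $W_2$ with disjoint boundaries. All the disks your argument produces are compressing disks of $H_1$ in $(V_1,t_1)$ or $(V_2,t_2)$; they say nothing directly about $W_1$ and $W_2$. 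Your proposed mechanism for (c) --- that the two disks might be ``forced to bound the same loop'' of $H_1\cap H_2$, whereupon you glue them into a sphere and invoke Proposition \ref{prop:KReducible} --- does not work: distinct loops of $H_1\cap H_2$ are automatically disjoint, and even if the two disks share a boundary loop you can take disjoint parallel push-offs, so this case does not obstruct conclusion (b); moreover a sphere meeting $K$ in two points gives neither a core knot nor a split knot, so Proposition \ref{prop:KReducible} does not apply, and in any case such a sphere does not yield properly embedded disks in $W_1$ and $W_2$.

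What is actually needed --- and what occupies most of the paper's proof (Lemmas \ref{lem:ABoundaryCompressible}, \ref{lem:4-2} and \ref{lem:4-3}) --- is a second, parallel analysis of the pieces of $H_1$ cut along $H_1\cap H_2$ inside the handlebodies of the $(2,0)$-splitting: the two annuli $F_1\cup F_2=H_1\cap W_2$ are compressible or $\bdd$-compressible in $W_2$, and the components of $H_1\cap W_1$ are handled by Lemma \ref{lem:SurfaceIn20}. One must also track on which side of $H_2$ each compressing disk of $S$ or of $A_1\cup A_2$ lies (its interior is disjoint from $H_2$, so it sits entirely in $W_1$ or entirely in $W_2$), since this determines whether it can be paired with a disk in the opposite handlebody to certify weak $K$-reducibility of $H_2$. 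In the branches where the disks land on the ``wrong'' sides for showing $H_1$ weakly $K$-reducible (for instance, when the $t_1$-compressing disk of $S$ lies in $W_2$), conclusion (c) is the only available outcome and cannot be reached by your construction. This half of the argument is missing from your proposal.
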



To prove this proposition, we need some lemmas. 

We can assume without loss of generality
that the disk with three holes $S$
is 
in 
$V_1$,
and the annuli $A_1$ and $A_2$
are 
in 
$V_2$. 

We have four cases as illustrated in Figure \ref{fig:l=4}.
In either case,
$H_1$ intersects $W_2$
in two annulus components $F_1$ and $F_2$ 
since 
$K$ is contained in 
$W_1$.

\begin{lemma}\label{lem:4-1}
 Suppose
that a component $\delta$ of $\bdd S$ bounds a disk $Q_1$ in $V_1$
such that $Q_1$ intersects the trivial arc $t_1$
in zero or one point.
$($The interior of the disk $Q_1$ may intersect $S$
in a union of finitely many loops.$)$
 Then at least one of the two conditions below holds.
\begin{enumerate}
\renewcommand{\labelenumi}{(\theenumi)}
\item
 The conclusion $(b)$ of Theorem \ref{thm:general} holds.
\item
 The conclusion $(a)$ of Theorem \ref{thm:general} holds.
 Moreover, we can decrease the number of loops of $H_1 \cap H_2$ by two.
\end{enumerate}
\end{lemma}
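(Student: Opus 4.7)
The plan is to use $Q_1$ to produce a $t_1$-compressing or meridionally compressing disk of $H_1$ in $(V_1,t_1)$ with boundary $\delta$, and then to analyze the annuli $A_1,A_2\subset V_2$ by Lemma~\ref{lem:H} to produce either a companion disk on the $V_2$-side (conclusion~(1)) or an isotopy removing exactly two loops of $H_1\cap H_2$ (conclusion~(2)).

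First I would clean up $Q_1\cap S$ by a standard innermost disk argument, exploiting that $S$ is disjoint from $K$. An innermost loop $c$ of $Q_1\cap S$ on $Q_1$ that is inessential on $S$ bounds an innermost disk $E\subset S$ with $E\cap Q_1=c$ and $E\cap t_1=\emptyset$; the subdisk $Q_1^{\ast}\subset Q_1$ bounded by $c$ away from $\delta$, together with $E$, bounds a ball in the irreducible solid torus $V_1$, and replacing $Q_1^{\ast}$ by a pushoff of $E$ strictly decreases $|Q_1\cap S|$ without increasing $|Q_1\cap t_1|$. Iterating, we may assume $Q_1\cap S=\bdd Q_1=\delta$, so that, since $\delta$ is $K$-essential in $H_1$, the disk $Q_1$ is a $t_1$-compressing or meridionally compressing disk of $H_1$ in $(V_1,t_1)$.

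Next I apply Lemma~\ref{lem:H} to make each $A_i$ either $t_2$-compressible or $t_2$-$\bdd$-compressible in $(V_2,t_2)$. If some $A_i$ is $t_2$-compressible, then $t_2$-compressing it produces two disks in $V_2$ disjoint from $t_2$ whose boundaries are the two components of $\bdd A_i\subset H_1\cap H_2$; at least one of these boundaries is distinct from (and hence disjoint from) $\delta$, so pairing that disk with $Q_1$ exhibits $H_1$ as weakly $K$-reducible, which is conclusion~(1). If both $A_i$ are only $t_2$-$\bdd$-compressible, I fix a $t_2$-$\bdd$-compressing disk $D$ of some $A_i$, cleaned up so that $\beta=D\cap H_1$ is disjoint from the two loops of $H_1\cap H_2\setminus\bdd A_i$; then the $\bdd$-compression of $A_i$ along $D$ is a disk $C\subset V_2$ with $\bdd C$ a band-sum of $\bdd A_i$ along $\beta$, lying in a small neighborhood of $\bdd A_i\cup\beta$ in $H_1$ and hence disjoint from $\delta$. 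If $\bdd C$ is essential in $H_1-\bdd t_2$, then $C$ is a $t_2$-compressing disk of $H_1$ in $V_2$, and $Q_1\cup C$ again witnesses weak $K$-reducibility: conclusion~(1). If $\bdd C$ bounds a disk $E$ in $H_1-\bdd t_2$, then $C\cup E$ bounds a $3$-ball in $V_2$ disjoint from $t_2$ (since $\bdd t_2\subset H_1-E$ and $C\cap t_2=\emptyset$), so $A_i$ is $\bdd$-parallel in $V_2-t_2$; isotoping $H_2$ across this parallelism removes the two loops $\bdd A_i$ from $H_1\cap H_2$ while leaving the other two loops $K$-essential in both $H_1$ and $H_2$, yielding conclusion~(2).

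The main obstacle will be arranging $\beta$ to be disjoint from the two loops of $H_1\cap H_2\setminus\bdd A_i$ while keeping it essential in $H_1$ cut along $\bdd A_i$: one must use an outermost sub-arc of $\beta$ against those loops together with the $t_2$-incompressibility of the other annulus $A_j$ to replace $D$ by a better-positioned $\bdd$-compressing disk. The branch in which an innermost loop of $Q_1\cap S$ is essential on $S$ (so that $Q_1^{\ast}$ becomes a compressing disk of $H_2$ in $V_1$ rather than yielding a direct simplification) must also be handled separately, still landing in conclusion~(1) or~(2).
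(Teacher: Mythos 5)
Your argument is correct in substance and follows the same route as the paper: use $Q_1$ as the $V_1$-side disk, then run $A_1\cup A_2$ through Lemma \ref{lem:H}, getting conclusion (1) from a $t_2$-compressing disk (or from an essential disk produced by $t_2$-$\bdd$-compression) and conclusion (2) from a $\bdd$-parallel annulus. However, both of the ``obstacles'' you flag and leave unresolved are in fact non-issues, and recognizing this shortens the proof to the paper's. First, the cleanup of $Q_1\cap S$ is unnecessary: weak $K$-reducibility of the $(1,1)$-splitting $H_1$ only asks for a $t_1$-compressing or meridionally compressing disk of $H_1$ in $(V_1,t_1)$, and those definitions constrain only $D\cap H_1$ and $D\cap t_1$ --- they say nothing about intersections with $H_2$. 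That is exactly why the lemma's statement carries the parenthetical remark that $\operatorname{int}Q_1$ may meet $S$; so $Q_1$ is already usable as is, and your unhandled branch (innermost loop of $Q_1\cap S$ essential in $S$) never needs to be addressed. Second, your ``main obstacle'' of repositioning $\beta$ off the loops of $\bdd A_j$ disappears if, as the paper does, you apply Lemma \ref{lem:H} to the disconnected surface $F=A_1\cup A_2=H_2\cap V_2$ in one stroke: condition (4) of Definition \ref{def:bdd-comp} then forces $\beta=\bdd D\cap H_1$ to be an essential arc in $H_1-t_2$ cut along \emph{all four} loops of $H_1\cap H_2$, so it is automatically disjoint from $\delta$ and from $\bdd A_j$, and no outermost-arc surgery against $A_j$ is required. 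With those two simplifications your proof coincides with the paper's.
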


\begin{proof}
 By Lemma \ref{lem:H},
the union of annuli $A_1\cup A_2$
is $t_2$-compressible or $t_2$-$\bdd$-compressible in $(V_2, t_2)$.
 If $A_1\cup A_2$ is $t_2$-compressible,
then a component of $H_1\cap H_2$ bounds a disk $Q_2$ in $V_2$
such that $Q_2$ does not intersect $t_2$.
 Then the disks $Q_1$ and $Q_2$ together show
that 
$H_1$ is weakly $K$-reducible.

 Suppose that $A_1\cup A_2$ is $t_2$-incompressible
and $t_2$-$\bdd$-compressible in $(V_2,t_2)$.
 If $A_1$ or $A_2$ is parallel to a subsurface of $\bdd V_2$
in $V_2 - t_2$,
then we can cancel two intersection loops of $H_1\cap H_2$.
 This is the conclusion (2). 
 Otherwise,
we obtain a $t_2$-compressing disk $Q'_2$ of $H_1$
in $(V_2, t_2)$
with $\bdd Q'_2 \cap \bdd S = \emptyset$ 
by $t_2$-$\bdd$-compression
on $A_1$ or $A_2$. 
Then 
$Q_1$ and $Q'_2$ show
that $H_1$ is weakly $K$-reducible. 
\end{proof}

\begin{lemma}\label{lem:ABoundaryCompressible}
 Suppose that 
$A_1 \cup A_2$
is $t_2$-$\bdd$-compressible in $(V_2, t_2)$.
 Then at least one of the conditions below holds.
\begin{enumerate}
\renewcommand{\labelenumi}{(\theenumi)}
\item
 The conclusion $(c)$ of Theorem \ref{thm:general} holds.
\item
 The conclusion $(a)$ of Theorem \ref{thm:general} holds.
 Moreover, we can decrease the number of loops of $H_1 \cap H_2$ by two.
\end{enumerate}
\end{lemma}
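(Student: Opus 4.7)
The plan is to use the $t_2$-$\bdd$-compressing disk $D$ of $A_1 \cup A_2$ in $(V_2, t_2)$ to either cancel two loops of $H_1 \cap H_2$ (giving conclusion $(2)$ of the lemma, which is $(a)$ of the theorem plus the quantitative addendum) or to exhibit compressing disks for $H_2$ on both sides with disjoint $H_2$-boundaries (giving $(c)$). This mirrors the proof of Lemma \ref{lem:4-1}, with the difference that, lacking an analog of the disk $Q_1 \subset V_1$, we must produce weak $K$-reducibility of $H_2$ rather than of $H_1$.

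First I would normalize $D$ by standard innermost-circle and outermost-arc arguments, reducing to $D \cap (A_1 \cup A_2) = \alpha$, a single essential arc which we may assume lies in $A_1$. These reductions use the $t_2$-incompressibility of $A_1 \cup A_2$; if $A_1 \cup A_2$ were $t_2$-compressible, pairing that compressing disk in $V_2$ with a compressing or meridionally compressing disk derived from $S$ via Lemma \ref{lem:pants} already supplies $(c)$. Set $\beta = \bdd D \cap H_1$ and let $D_1$ be the disk produced by $\bdd$-compressing $A_1$ along $D$. By Lemma \ref{lem:solid1}, $D_1$ is either a meridian disk of $V_2$ disjoint from $t_2$, or a $\bdd$-parallel disk cutting off a ball containing $t_2$. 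In the latter case, tracing the parallelism back through the $\bdd$-compression (using that $D$ itself lies off $t_2$) shows that $A_1$ is parallel through $V_2 - t_2$ to a subsurface of $H_1$; isotoping $H_2$ across this parallelism pushes $A_1$ into $V_1$ and deletes exactly the two loops $\bdd A_1$ from $H_1 \cap H_2$. The remaining two loops $\bdd A_2$ stay $K$-essential both in $H_1$ and in $H_2$, since they are non-separating in the new $H_2$ and the isotopy is supported away from them and from $K$. This yields $(2)$.

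Otherwise $D_1$ is a genuine meridian disk of $V_2$. The disk $D$ lies on one definite side of $A_1$ in $V_2$, say in $V_2 \cap W_j$, and thickening $D$ by pushing $\beta$ slightly off $H_1$ into $V_2 \cap W_j$ with its endpoints running along $A_1 \subset H_2$ produces a compressing disk $\widehat D$ of $H_2$ in $W_j$ whose boundary is essential on $H_2$ by essentiality of $\beta$ in the surface obtained from $H_1 - t_2$ by cutting along $\bdd(A_1 \cup A_2)$. Pushing a parallel copy of $D_1$ into $V_2 \cap W_{3-j}$ and thickening it analogously through the annular structure of $F_1 \cup F_2 = H_1 \cap W_{3-j}$, which caps the $H_1$-boundary of the pushed disk onto $H_2$, yields a compressing disk $D^\ast$ of $H_2$ in $W_{3-j}$; by choosing the parallel copy of $D_1$ with $\bdd D_1 \cap \bdd D = \emptyset$ on $H_1$ at the outset we guarantee $\bdd \widehat D \cap \bdd D^\ast = \emptyset$ on $H_2$. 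The pair $(\widehat D, D^\ast)$ then exhibits $H_2$ as weakly $K$-reducible, giving $(c)$.

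The main technical difficulty lies in the construction of $D^\ast$ on the side away from $D$: one must verify, by cases according to whether $\bdd D_1$ is $\bdd$-parallel in $F_i$ to a component of $\bdd F_i$ or is inessential in $F_i$ (in which case an innermost-sphere argument in the handlebody $W_{3-j}$ rules the case out), that the thickening really produces a compressing disk with essential $H_2$-boundary disjoint from $\bdd \widehat D$. The necessary bookkeeping concerns how $\bdd D$ and $\bdd D_1$ meet the pattern $\bdd(A_1 \cup A_2) \subset H_1$ and on which side of each $A_i$ the two thickenings take place.
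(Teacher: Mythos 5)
Your overall strategy (use the $t_2$-$\bdd$-compressing disk either to isotope $H_2$ and cancel two loops, or to exhibit weak $K$-reducibility of $H_2$) is the right one, and you correctly note that the disks must lie on the two sides of $H_2$, not of $H_1$. But both of your key constructions fail as described. First, ``thickening $D$'' does not produce a compressing disk $\widehat D$ of $H_2$ in $W_j$: the arc $\beta=\bdd D\cap H_1$ lies on $H_1$, not on $H_2$, and pushing it off $H_1$ into $\text{int}(V_2\cap W_j)$ leaves part of the boundary of the resulting disk in the interior of $W_j$ rather than on $H_2$. The only way to trade $\beta$ for boundary on $H_2$ is to $\bdd$-compress, along $D$, the component $R$ of $H_1\cap W_j$ that contains $\beta$ --- and then everything depends on what $R$ is (an annulus disjoint from $K$, an annulus meeting $K$ in one or two points, or a disk with two holes) and on whether the resulting boundary is essential in $H_2$. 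That case analysis, which your proposal defers to ``bookkeeping,'' is the actual content of the proof: in several of the cases one gets an isotopy or a contradiction (e.g.\ the annulus meeting $K$ once yields a non-separating torus in $W_1$), not a compressing disk.

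Second, the construction of $D^\ast$ is not repairable in the form you give. The disk $D_1$ obtained by $\bdd$-compressing $A_1$ along $D$ lies, after the standard push-off, entirely in $\overline{V_2\cap W_j}$ with $\bdd D_1\subset H_1\cap W_j$; a parallel copy of it cannot be ``pushed into $V_2\cap W_{3-j}$'' without crossing $H_2$, which is exactly the obstruction the lemma is about. A meridian disk of $V_2$ disjoint from $t_2$ is a $t_2$-compressing disk of $H_1$ and would serve to show $H_1$ weakly $K$-reducible if paired with a disk in $V_1$, but it gives nothing directly on the $W_{3-j}$ side of $H_2$. The disks in $W_2$ have to come from a separate source: one must examine whether the union of annuli $F_1\cup F_2=H_1\cap W_2$ is compressible or $\bdd$-compressible in the handlebody $W_2$ (and, on the $W_1$ side, apply Lemma~\ref{lem:SurfaceIn20} to the components of $H_1\cap W_1$). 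Your dichotomy on $D_1$ (meridian disk versus $\bdd$-parallel disk) therefore does not organize the proof correctly; moreover, even in the $\bdd$-parallel case you must rule out that the region of parallelism contains $A_2$, which would obstruct the claimed isotopy.
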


\begin{proof}
 Let $D$ be a $t_2$-$\bdd$-compressing disk of $A_1 \cup A_2$.
 We can assume without loss of generality
that $D$ is incident to $A_1$.
 Suppose first that $D$ is contained in $W_2$.
 Then $D$ is incident
to one of the annuli $H_1 \cap W_2 = F_1 \cup F_2$, say $F_1$, 
then $F_1$ is parallel to $A_1$ in $W_2$.
 Hence we have the conclusion (2).

 Hence we can assume that $D$ is contained in $W_1$.
 Let $R$ be the component of $H_1 \cap W_1$
such that $R$ contains the arc  $D \cap H_1$.
 Since this arc connects distinct components of $\bdd A_1$,
$R$ has two or more boundary loops.
Therefore, $R$ is either an annulus or disk with two holes
rather than a disk or a torus with one hole.

If $R$ is an annulus disjoint from 
$K$,
then this annulus $R$ is parallel to $A_1$ in $W_1$
and the parallelism is disjoint from 
$K$, 
and we obtain the conclusion (2). 

If $R$ is an annulus which intersects $K$ in a single point,
then the torus $A_1 \cup R$ intersects 
$K$ transversely
in a single point,
and hence it forms a non-separating torus
when pushed slightly into the interior of the handlebody $W_1$.
 This is a contradiction.

 We consider the case
where $R$ is an annulus
which intersects 
$K$ in precisely two points.
We call the annulus $R_1$ (Case (3) in Figure \ref{fig:l=4}). 
 Then $H_1 \cap H_2$ does not contain an inessential loop in $H_1$
(ignoring 
$K \cap H_1$),
and the other component of $H_1 \cap W_1$ is an annulus, say $R_2$,
which is disjoint from 
$K$.
 Since the torus $H_1$ is connected,
each of the annuli $H_1 \cap W_2 = F_1 \cup F_2$ connects
a component of $\bdd R_1$ and a component of $\bdd R_2$,
and hence
a component of $\bdd A_1$ and a component of $\bdd A_2$.
 The union of annuli $F_1 \cup F_2$ is
compressible or $\bdd$-compressible in 
$W_2$
because an incompressible and $\bdd$-incompressible surface
properly embedded in a handlebody is a disk.
We suppose first
that it is compressible.
Compressing $F_1 \cup F_2$,
we obtain two disks in $W_2$,
one of which is bounded by a component of $\bdd A_1$,
and the other 
by a component of $\bdd A_2$.
 The annulus $R_2$ is $K$-compressible or $K$-$\bdd$-compressible
in $(W_1, K)$ by Lemma \ref{lem:SurfaceIn20}.
 In the former case, $K$-compressing $R_2$,
we obtain a disk
which is disjoint from $K$
and is bounded by a component of $\bdd A_2$.
 Then 
$H_2$ is 
weakly $K$-reducible.
This is the conclusion (1).
 Hence we can assume
that $R_2$ is $K$-$\bdd$-compressible in $(W_1, K)$.
 If a $K$-$\bdd$-compressing disk of $R_2$ is incident to $A_2$,
then $R_2$ is parallel to $A_2$ in $W_1$, 
and 
we have the conclusion (2).
 When a $K$-$\bdd$-compressing disk of $R_2$
is incident to $S \cup A_1$,
by performing a $K$-$\bdd$-compressing operation on $R_2$,
we obtain a $K$-compressing disk of $H_2$.
 An adequate small isotopy moves this disk
to be disjoint from $\bdd A_2$.
 Hence 
$H_2$ is weakly $K$-reducible.
 This is the conclusion (1).
 Thus, 
we can assume
that $F_1 \cup F_2$ is $\bdd$-compressible in $W_2$.
 Let $D_2$ be a $\bdd$-compressing disk.
 We can assume without loss of generality
that $D_2$ is incident to $F_1$. 
 Since $F_1$ connects a component of $\bdd A_1$
and a component of $\bdd A_2$,
the arc $\bdd D_2 \cap H_2$ is contained
in the disk with three holes $S$.
The $t_2$-$\bdd$-compressing disk $D$ of $A_1 \cup A_2$ intersects
$A_1$ in a subarc of $\bdd D$,
and this arc is essential in $A_1$
by Definition \ref{def:bdd-comp}.
 We take the disks $D_2$ and $D$
so that they are disjoint from each other.
First, we isotope $H_1$ along 
$D$.
Then 
$A_1$ is deformed into a disk $A'_1$ 
and the disk with three holes $S$ is deformed
into a torus with three holes $S'$.
 The annuli $F_1 \cup F_2$ are connected by a band
and are deformed into a disk with two holes $P$.
 The annulus $R_1$ is deformed into a disk $R'$ 
which intersects 
$K$ in precisely two points.
 The disk $D_2$ now forms a $\bdd$-compressing disk of $P$.
 The arc $\bdd D_2 \cap H_2$ connects
the loop $\bdd A'_1$ and a component of $\bdd A_2$.
Then we isotope $H_1$ along 
$D_2$.
The disk $A'_1$ and the annulus $A_2$ are connected
by a band and deformed into an annulus.
 The torus with three holes $S'$ is deformed
into a torus with two holes.
 The disk with two holes $P$ is deformed
into an annulus.
 The disk $R'$ and the annulus $R_2$ are connected by a band,
and deformed into an annulus
which intersects 
$K$ transversely in two points.
 Thus we have isotoped $H_1$ and $H_2$
so that they intersects in two loops
which are $K$-essential both in $H_1$ and in $H_2$.
 This is the conclusion (2).

 If $R$ is a disk with two holes,
then, by performing a $K$-$\bdd$-compressing operation on $R$
along the disk $D$,
we obtain an annulus $F$.
Note that $F$ is disjoint from 
$K$,
and a component of $\bdd F$ is an inessential loop $\ell$ in $A_1$
and the other component of $\bdd F$ is a component of $\bdd A_2$.
 By gluing $F$ and a disk on $A_1$ along $\ell$, 
we can obtain a disk $F'$
which is bounded by a component of $\bdd A_2$.
This disk $F'$ is disjoint from $K$.
 The union of annuli $H_1 \cap W_2 = F_1 \cup F_2$
is compressible or $\bdd$-compressible in $W_2$.
 When it is compressible,
we obtain a disk bounded by a loop of $\bdd A_2 \subset H_1 \cap H_2$
by compressing $F_1 \cup F_2$.
This disk shows together with $F'$
that 
$H_2$ is (weakly) $K$-reducible.
 This is the conclusion (1) of this lemma.
 Hence we can assume
that $F_1 \cup F_2$ is $\bdd$-compressible in $W_2$.
 If a $\bdd$-compressing disk of $F_1 \cup F_2$ is incident
to one of the annuli $A_1$ and $A_2$, say $A_1$,
then $F_1$ or $F_2$ is parallel to $A_1$ in $W_2$.
 Thus we 
obtain the conclusion (2).
 Hence we can assume
that a $\bdd$-compressing disk of $F_1 \cup F_2$ is incident
to the disk with three holes $S$.
 Then we obtain a compressing disk of $S$ in $W_2$
by performing a $\bdd$-compressing operation on $F_1 \cup F_2$.
 This disk shows together with the disk $F'$
that 
$H_2$ is weakly $K$-reducible.
 This is the conclusion (1) of this lemma.
\end{proof}

\begin{lemma}\label{lem:4-2}
Suppose that $S$ has a $t_1$-compressing disk $D$ in $(V_1,t_1)$.
If $D$ is contained in the handlebody $W_1$,  
then at least one of the two conditions below holds.
\begin{enumerate}
\renewcommand{\labelenumi}{(\theenumi)}
\item
 The conclusion $(c)$ of Theorem \ref{thm:general} holds.
\item
 The conclusion $(a)$ of Theorem \ref{thm:general} holds.
 Moreover, we can decrease the number of loops of $H_1 \cap H_2$ by two
\end{enumerate}
\end{lemma}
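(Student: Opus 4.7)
The strategy is to upgrade $D$ to a $K$-compressing disk of $H_2$ on the $W_1$ side, and then locate a compressing disk of $H_2$ in $W_2$ whose boundary is disjoint from $\partial D$ (giving weak $K$-reducibility, i.e.\ conclusion~(c) of Theorem~\ref{thm:general}) or produce an isotopy removing two intersection loops (conclusion~(a)).

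First I would verify that $D$ is a $K$-compressing disk of $H_2$ in $(W_1,K)$. Since $D$ is a $t_1$-compressing disk of $S=H_2\cap V_1$, the loop $\partial D$ is essential on the $4$-holed sphere $S$, hence separates $\partial S$ into two pairs of boundary components. A short case analysis (the ``unmixed'' case in which $\partial D$ separates $\partial A_1$ from $\partial A_2$, and the two ``mixed'' cases) shows that in each case $\partial D$ is an essential simple closed curve on the genus-$2$ surface $H_2$: separating $H_2$ into two once-punctured tori in the unmixed case, and non-separating in the mixed cases. Since $K\cap H_2=\emptyset$ this forces $\partial D$ to be $K$-essential on $H_2$, and combined with $D\subset W_1$ and $D\cap K=D\cap t_1=\emptyset$, it gives the desired $K$-compressing disk on the $W_1$ side.

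Next I would analyze $F_1\cup F_2=H_1\cap W_2$, a pair of properly embedded annuli in the genus-$2$ handlebody $W_2$. Since $K\cap W_2=\emptyset$, the standard handlebody fact (incompressible and $\partial$-incompressible surfaces are spheres or disks, cf.\ Lemma~\ref{lem:SurfaceIn20}) forces some $F_i$, say $F_1$, to be compressible or $\partial$-compressible in $W_2$. If $F_1$ is compressible, the standard compression yields two disks in $W_2$ whose boundaries are the components of $\partial F_1\subset H_1\cap H_2\subset\partial S$; these loops are $K$-essential on $H_2$ by hypothesis and lie in $\partial S$, hence are disjoint from $\partial D\subset\mathrm{int}\,S$. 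Paired with $D$ this realizes conclusion~(c). If instead $F_1$ is boundary-parallel to an annulus on $H_2$, an isotopy of $H_1$ across the parallelism eliminates $\partial F_1$ from $H_1\cap H_2$, decreasing $|H_1\cap H_2|$ by two and yielding conclusion~(a).

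In the remaining case, let $G$ be a $\partial$-compressing disk for $F_1$ with arc $\beta=\partial G\cap H_2\subset H_2-\partial F_1$; then $\partial$-compressing $F_1$ along $G$ produces a compressing disk $F_1'$ of $H_2$ in $W_2$ whose boundary is obtained from $\partial F_1$ by banding along $\beta$. A standard outermost-arc/cut-and-paste argument on $G\cap D$ (replacing outermost subdisks of $G$ lying off $D$, and using essentialness of $\partial D$ on $S$) lets us arrange $\beta$ to be disjoint from $\partial D$; once $\partial F_1'\cap\partial D=\emptyset$, the pair $(D,F_1')$ yields conclusion~(c). The main technical hurdle is this final step, where intersections between $G$ and $D$ must be resolved without sacrificing the compressing-disk property of the resulting surface; the argument should parallel the cut-and-paste reasoning used in the proof of Lemma~\ref{lem:ABoundaryCompressible}.
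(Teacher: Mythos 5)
Your first two steps track the paper's proof exactly: you check that $D$ is a $K$-compressing disk of $H_2$ in $(W_1,K)$, and then split on whether $F_1\cup F_2=H_1\cap W_2$ is compressible in $W_2$ (pair the compressing disk with $D$ to get conclusion (c)), boundary-parallel into $A_1$ or $A_2$ (isotope to get conclusion (a), dropping two loops), or $\partial$-compressible with arc in $S$. The problem is your last step.

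In that last case the $\partial$-compressing disk $G$ of $F_1$ lies in $W_2$, while $D$ lies in $W_1$; their interiors are on opposite sides of $H_2$, so $G\cap D=\beta\cap\partial D$ is a finite set of points on $S$, not a collection of arcs. There is no outermost-arc or cut-and-paste surgery to perform, and the points of $\beta\cap\partial D$ cannot in general be removed by isotopy: $\beta$ is an arc in the $4$-holed sphere $S$ joining the two components of $\partial F_1$, and in the ``mixed'' configurations you yourself identified (where $\partial D$ is non-separating on $H_2$), $\partial D$ separates those two boundary circles inside $S$, so \emph{every} such arc meets $\partial D$ an odd number of times. Hence $\partial D'_2$ (the boundary of the disk obtained by $\partial$-compressing $F_1$ along $G$) may unavoidably intersect $\partial D$, and the pair $(D,D'_2)$ does not witness weak $K$-reducibility. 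The paper explicitly flags this (``the boundary loop $\partial D$ may intersect $\partial D'_2$'') and escapes by bringing in a \emph{third} surface: it applies Lemma \ref{lem:H} to $A_1\cup A_2$ in $(V_2,t_2)$. If $A_1\cup A_2$ is $t_2$-$\partial$-compressible one invokes Lemma \ref{lem:ABoundaryCompressible}; if it has a $t_2$-compressing disk $G'$, then $\partial G'$ is a loop of $H_1\cap H_2=\partial S$ and is therefore disjoint from both $\partial D$ and $\partial D'_2$ (which lie in $\mathrm{int}\,S$), so one pairs $G'$ with $D'_2$ when $G'\subset W_1$ and with $D$ when $G'\subset W_2$. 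Some argument of this kind, producing a disk whose boundary avoids $\mathrm{int}\,S$, is needed to close your gap.
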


\begin{proof}
 The union of annuli $H_1 \cap W_2 = F_1 \cup F_2$
is compressible or $\bdd$-compressible in $W_2$.
 If $F_1 \cup F_2$ is compressible in $W_2$,
by compressing $F_1 \cup F_2$
we obtain an essential disk $E$ in $W_2$
such that $\bdd E \subset H_1\cap H_2$.
 Since $D \subset W_1$, 
the disks $D$ and $E$ show
that $H_2$ is weakly $K$-reducible. 
 This is the conclusion (1).

 Hence we may assume that $F_1 \cup F_2$
is $\bdd$-compressible in $W_2$,
and let $D_2$ be a $\bdd$-compressing disk of $F_1 \cup F_2$.
 If the arc $\bdd D_2 \cap H_2$ is contained
in one of the annuli $A_1$ and $A_2$, say $A_1$,
then $F_1$ or $F_2$ is parallel to $A_1$ in $W_2$,
we have the conclusion (2).

 Suppose that the arc $\bdd D_2 \cap H_2$ is contained in $S$.
 By $\bdd$-compression along $D_2$, 
we obtain from $F_1$ or $F_2$ an essential disk $D'_2$ in $W_2$.
 Note that $\bdd D'_2 \subset S$.
 (The boundary loop $\bdd D$ may intersect $\bdd D'_2$.)
 The union of annuli $A_1\cup A_2$ is
$t_2$-compressible or $t_2$-$\bdd$-compressible in $(V_2,t_2)$
by Lemma \ref{lem:H}.
 When it is $t_2$-$\bdd$-compressible,
we obtain the desired conclusion
by Lemma \ref{lem:ABoundaryCompressible}.
 Hence we may assume
that $A_1 \cup A_2$ has a $t_2$-compressing disk $G$
in $(V_2, t_2)$.
 Since the interior of $G$ is disjoint from $H_2$,
it is entirely contained in $W_1$ or $W_2$.
 If $G$ is in $W_1$,
then $G$ and $D'_2$ assure that $H_2$ is weakly $K$-reducible.
 If $G$ is in $W_2$,
then $G$ and $D$ assure that $H_2$ is weakly $K$-reducible.
 In both cases, we obtain the conclusion (1).
\end{proof}

\begin{lemma}\label{lem:4-3}
 Suppose that $S$
has a $t_1$-compressing disk $D$ in $(V_1, t_1)$.
 If $D$ is contained in the handlebody $W_2$,
then at least one of the two conditions below holds.
\begin{enumerate}
\renewcommand{\labelenumi}{(\theenumi)}
\item
 The conclusion $(c)$ of Theorem \ref{thm:general} holds.
\item
 The conclusion $(a)$ of Theorem \ref{thm:general} holds.
\end{enumerate}
\end{lemma}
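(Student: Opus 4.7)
My plan parallels the strategy of Lemma \ref{lem:4-2}, except that the compressing disk $D$ now lies in $W_2$ rather than in $W_1$. The analysis splits on whether $\bdd D$ is essential in $H_2$.

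First, I will dispose of the case where $\bdd D$ is inessential in $H_2$. Then $\bdd D$ bounds a disk $\Delta$ in $H_2$, and since $\bdd D$ is essential in the disk-with-three-holes $S$, the interior of $\Delta$ must contain at least one boundary loop of $S$, i.e., at least one loop of $H_1 \cap H_2$. Because $W_2$ is a handlebody and hence irreducible, the $2$-sphere $D \cup \Delta$ bounds a $3$-ball in $W_2$; isotoping $H_2$ across this ball replaces $\Delta$ by $D$, deleting from $H_1 \cap H_2$ the loops previously contained in $\text{int}\,\Delta$. The remaining intersection loops are still $K$-essential in both $H_1$ and $H_2$, giving conclusion $(a)$ of Theorem \ref{thm:general}.

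Assume then that $\bdd D$ is essential in $H_2$, so that $D$ compresses $H_2$ in $W_2$. I will apply Lemma \ref{lem:H} to $A_1 \cup A_2$ in $(V_2, t_2)$. If $A_1 \cup A_2$ is $t_2$-$\bdd$-compressible, Lemma \ref{lem:ABoundaryCompressible} finishes the argument. Otherwise $A_1 \cup A_2$ admits a $t_2$-compressing disk $G$, which is a meridian disk of $V_2$ bounded by some loop $\ell$ of $H_1 \cap H_2$ and disjoint from $K$; the interior of $G$ lies either in $W_1$ or in $W_2$. If $G \subset W_1$, then $G$ is a $K$-compressing disk of $H_2$ in $(W_1, K)$ with $\bdd G \subset \bdd S$, and a small isotopy of $\bdd D$ inside $S$ makes it disjoint from $\bdd G$. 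Hence $D$ and $G$ together witness weak $K$-reducibility of $H_2$, giving conclusion $(c)$.

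The hard part will be the subcase $G \subset W_2$, where $D$ and $G$ are compressing disks of $H_2$ on the same ($W_2$) side and no direct pairing gives weak $K$-reducibility. To handle it, I plan to analyze $H_1 \cap W_1$ in $(W_1, K)$: each of its components is an annulus or a disk with two holes with boundary on $H_1 \cap H_2$, and none of these fits the classification of $K$-incompressible and $K$-$\bdd$-incompressible surfaces in Lemma \ref{lem:SurfaceIn20}, so $H_1 \cap W_1$ has a $K$-compressing or $K$-$\bdd$-compressing disk in $(W_1, K)$. Combining this disk with, if necessary, a $\bdd$-compression of $F_1 \cup F_2$ in $W_2$, and running through the subcases of Figure \ref{fig:l=4}, I expect to produce either an isotopy reducing $|H_1 \cap H_2|$ (conclusion $(a)$) or a $K$-compressing/meridionally compressing disk of $H_2$ in $W_1$ whose boundary lies in $H_1 \cap H_2$ and is therefore disjoint from $\bdd D \subset \text{int}\,S$ (conclusion $(c)$). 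The main obstacle is precisely this step: translating the $K$-compressing information on $H_1 \cap W_1$ into a disk of $H_2$ on the $W_1$ side that avoids $\bdd D$.
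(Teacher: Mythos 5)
Your opening reductions track the paper exactly: if $A_1\cup A_2$ is $t_2$-$\bdd$-compressible you invoke Lemma \ref{lem:ABoundaryCompressible}, and if its $t_2$-compressing disk lies in $W_1$ you pair it with $D$ to get weak $K$-reducibility. (Your preliminary case ``$\bdd D$ inessential in $H_2$'' is actually vacuous: $\bdd D$ is essential in $S$, and every loop of $\bdd S=H_1\cap H_2$ is essential in $H_2$, so no disk in $H_2$ bounded by $\bdd D$ can exist; this is harmless but shows the case analysis was not checked.)

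The genuine gap is that the subcase you yourself flag as ``the hard part'' --- the $t_2$-compressing disk $D_1$ of $A_1\cup A_2$ lying in $W_2$ --- is exactly where the paper's proof lives, and you only state a plan (``I plan to\dots I expect to\dots The main obstacle is precisely this step'') rather than an argument. The paper handles it by a four-way case division of $H_1\cap W_1$ according to Figure \ref{fig:l=4} (note that the components include a disk meeting $K$ twice and a once-punctured torus, not only annuli and pairs of pants as you assert), applies Lemma \ref{lem:SurfaceIn20} to get a $K$-compressing or $K$-$\bdd$-compressing disk, and then must deal with several contingencies your sketch does not anticipate: when the $\bdd$-compressing arc lands in $A_1\cup A_2$ the argument must loop back into Lemma \ref{lem:ABoundaryCompressible}; the disk produced on the $W_1$ side is generally paired with $D_1$ (whose boundary is parallel to a loop of $H_1\cap H_2$), not with $D$, since its boundary is a band sum of loops of $H_1\cap H_2$ along an arc in $S$ and need not avoid $\bdd D$; and in Case (2) a $\bdd$-compressing arc with both endpoints on one component of $\bdd R$ forces a further isotopy of $H_1$ followed by a second round of the whole analysis on the resulting five-loop intersection. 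None of this is routine, and without it the lemma is not proved.
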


\begin{proof}
 By Lemma \ref{lem:H},
$A_1 \cup A_2$
is $t_2$-compressible or $t_2$-$\bdd$-compressible
in $(V_2, t_2)$.
 In the latter case,
we are done in Lemma \ref{lem:ABoundaryCompressible}.
 Hence we may assume
that $A_1 \cup A_2$ is $t_2$-compressible.
 Let $D_1$ be a $t_2$-compressing disk of $A_1 \cup A_2$.
 If $D_1$ is in $W_1$,
then this disk $D_1$ shows together with $D$ that 
$H_2$ is weakly $K$-reducible.
 Thus we obtain the conclusion (1).

 Hence we may assume that $D_1$ is in $W_2$.
 Here we have four cases (see Figure \ref{fig:l=4}) on $H_1\cap W_1$:
(1) $H_1\cap W_1$ consists of
a disk $Q$ which intersects $K$ in two points,
an annulus $R$ which is disjoint from $K$
and a torus with one hole which is disjoint from $K$;
(2) $H_1\cap W_1$ consists of
a disk $Q$ which intersects $K$ in two points
and a disk with two holes $R$ which is disjoint from $K$; 
(3) $H_1\cap W_1$ consists of
an annulus $R_1$ which intersects $K$ in two points
and an annulus $R_2$ which is disjoint from $K$; and
(4) $H_1\cap W_1$ consists of two annuli $R_1, R_2$
each of which intersects $K$ in one point.

{\bf Case (1) or (4).}
Set $J=Q \cup R$ (Case (1)) or $R_1\cup R_2$ (Case (4)).
 By Lemma \ref{lem:SurfaceIn20}, 
$J$ is $K$-compressible or $K$-$\bdd$-compressible
in $(W_1, K)$.
 If $J$ is $K$-compressible,
then, by compressing $J$,
we obtain a disk $D_2$ 
such that $D_2 \cap K=\emptyset$
and that $\bdd D_2$ is a component of $H_1\cap H_2$.
 The disks $D_2$ and $D$ assure
that $H_2$ is weakly $K$-reducible. 
 This is the conclusion (1) of this lemma.

 Suppose that $J$ is $K$-$\bdd$-compressible in $(W_1, K)$,
and let $D_3$ be a $K$-$\bdd$-compressing disk of $J$.
 Let $J_0$ be the component of $J$
to which the $K$-$\bdd$-compressing disk $D_3$ is incident.
 If the arc $\bdd D_3 \cap H_2$ is contained in $A_1 \cup A_2$,
then the disk $D_3$ is also a $t_2$-$\bdd$-compressing disk
of $A_1 \cup A_2$ in $(V_2, t_2)$,
and we are done in Lemma \ref{lem:ABoundaryCompressible}.
 Hence we may assume
that the arc $\bdd D_3 \cap H_2$ is contained
in 
$S$.
 (The arc $\bdd D_3 \cap H_2$ may intersect a loop of $H_1 \cap H_2$ 
in its interior in case (1).
 However, we can isotope it so that $\bdd D_3 \cap H_2$ is contained in $S$,
since $\bdd D_3 \cap H_2 $ connects the same side of $Q$
if $D_3$ intersects $Q$.)
 By $K$-$\bdd$-compression on $J_0$ along $D_3$
and an adequate small isotopy,
we obtain an essential disk $D'_3$ in $W_1$
such that $\bdd D'_3 \cap D_1 = \emptyset$
and that $D'_3$ intersects 
$K$ transversely
in at most one point.
The disks $D_1$ and $D'_3$ show
that 
$H_2$ is weakly $K$-reducible.
Thus we obtain the conclusion (1).

{\bf Case (3).}
We may assume without loss of generality 
that the $t_2$-compressing disk $D_1$ is incident to $A_1$
rather than to $A_2$.
 Suppose first
that a component of $\bdd R_2$
is a component of $\bdd A_1$.
 By Lemma \ref{lem:SurfaceIn20}, 
$R_2$ is $K$-compressible or $K$-$\bdd$-compressible in $(W_1,K)$.
 If $R_2$ is $K$-compressible,
then $K$-compressing on $R_2$ yields a disk
which is disjoint from 
$K$
and is bounded by a loop of $H_1 \cap H_2$.
Hence this disk and 
$D_1$ together
show that 
$H_2$ is weakly $K$-reducible.
This is the conclusion (1). 
Suppose that $R_2$ is $K$-$\bdd$-compressible in $(W_1,K)$.
Let $D_4$ be a $K$-$\bdd$-compressing disk of $R_2$.
If the arc $\bdd D_4 \cap H_2$ is contained in $A_1$,
then $R_2$ is parallel to $A_1$ in $W_1$,
and the parallelism is disjoint from 
$K$.
We obtain the conclusion (2). 
 If the arc $D_4$ is not entirely contained in $A_1$,
then, by $K$-$\bdd$-compression on $R_2$ along $D_4$,
we obtain a disk
which is essential in $W_1$ and is disjoint from 
$K$.
An adequate isotopy moves this disk
to be disjoint from 
$\bdd D_1$
which is parallel to $\bdd A_1$ in $H_2$.
This disk and the disk $D_1$ show
that 
$H_2$ is weakly $K$-reducible.
We obtain the conclusion (1) again.

 Hence we may assume
that $\bdd R_2 = \bdd A_2$ and $\bdd R_1 = \bdd A_1$.
Then each of the annuli $H_1 \cap W_2 = F_1 \cup F_2$
is bounded by a component of $\bdd A_1$ and that of $\bdd A_2$.
 Recall that the annulus $A_1$ has a $t_2$-compressing disk $D_1$
in $W_2 \cap V_2$.
Performing the $t_2$-compression on a copy of $A_1$,
we obtain a disk
which is contained in $W_2 \cap V_2$
and shows that $F_1$ is compressible.
 By compressing $F_1$,
we obtain a compressing disk of 
$A_2$ in $W_2 \cap V_2$.
Since $\bdd R_2 = \bdd A_2$ and $A_2$ has a compressing disk in $W_2$,
we can apply similar argument
as in the previous paragraph to this case,
to obtain the desired conclusion.

{\bf Case (2).}
 By Lemma \ref{lem:SurfaceIn20}, 
$Q \cup R$ is $K$-compressible or $K$-$\bdd$-compressible in $(W_1, K)$.
 If $Q\cup R$ is $K$-compressible,
then 
$K$-compression on $Q \cup R$ yields a disk
which is disjoint from $K$. 
 This disk is bounded by a loop of $H_1 \cap H_2$,
and assures that $H_2$ is weakly $K$-reducible
together with $D$.
 This is the conclusion (1).

So, we may assume that $Q \cup R$ is $K$-$\bdd$-compressible.
Let $D_5$ be a $K$-$\bdd$-compressing disk of $Q \cup R$.
 If $D_5$ is incident to 
$Q$, 
then we obtain a disk $Q'$
from $Q$ by $K$-$\bdd$-compression along $D_5$.
 Note that $Q'$ intersects $K$ in a single point.
 The disks $D_1$ and $Q'$ show
that 
$H_2$ is weakly $K$-reducible.
This is the conclusion (1).

Thus we may assume
that $D_5$ is incident to 
$R$.
If the arc $\bdd D_5 \cap H_2$ is contained in $A_1$ or $A_2$,
then the disk $D_5$ is also a $t_2$-$\bdd$-compressing disk
of $A_1 \cup A_2$,
and we obtain the desired conclusion
by Lemma \ref{lem:ABoundaryCompressible}.
 Hence we can assume
that the arc $\bdd D_5 \cap H_2$ is contained
in 
$S$.
If the arc $\bdd D_5 \cap H_2$
connects two distinct components of $\bdd R$,
then we can reduce the number of intersection loops $H_1 \cap H_2$
by isotoping $H_1$ along the disk $D_5$,
and obtain the conclusion (2).  

Therefore we may assume
that the arc $\bdd D_5 \cap H_2$ has its two endpoints
in a single component, say $\ell$, of $\bdd S$.
 The loop $\ell$ is a component of $\bdd A_j$ for $j=1$ or $2$.
 Set $k$ so that $\{ j, k \} = \{ 1, 2 \}$.
We isotope 
$H_1$ along 
$D_5$.
Then the disk with two holes $R$ is deformed
into two annuli $R_1$ and $R_2$.
 For $i=1$ and $2$,
let $\ell_i$ be a component of $\bdd R_{i}$
such that $\ell_{i}\not\subset\bdd R$.
See Figure \ref{fig:4b-1}.

\begin{figure}[htbp]
\centering
\includegraphics[width=.7\textwidth]{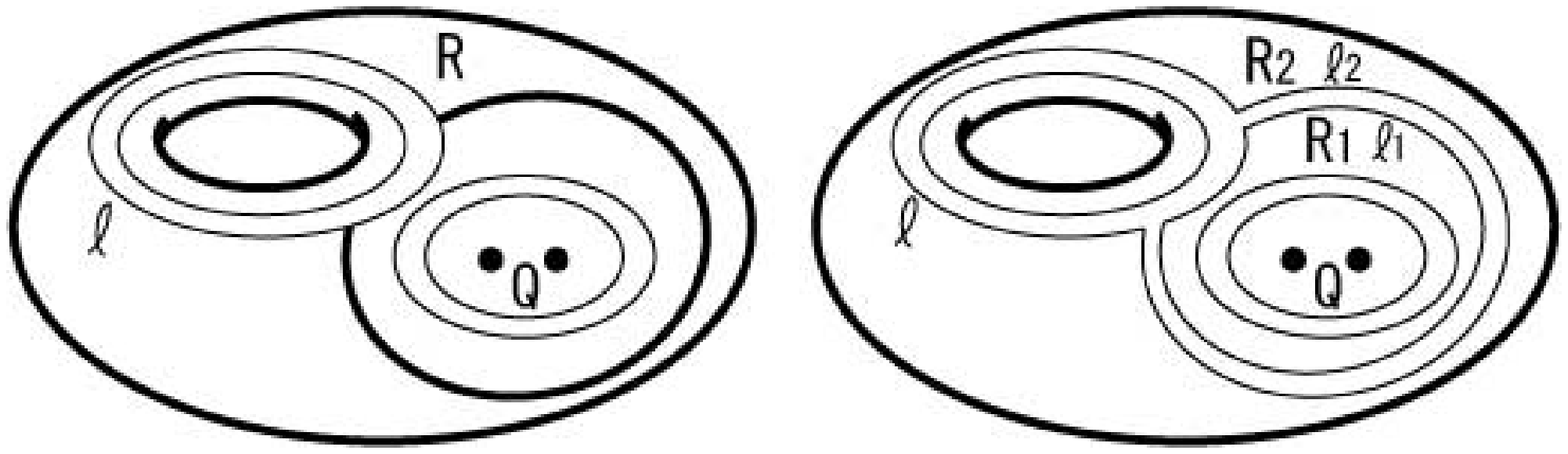}
\caption{}
\label{fig:4b-1}
\end{figure}

 Then the annulus $A_j$ is deformed
into a disk with three holes $A'_j$
and the disk with three holes $S$ is deformed
into two planar surfaces $S_1$ and $S_2$,
where $S_i$ contains the loop $\ell_i$
for $i=1$ and $2$.
 One of $S_1$ and $S_2$ is an annulus,
and the other is a disk with two holes.
 We can assume that $S_1$ is a disk with two holes,
by changing the suffix numbers of $R_1$ and $R_2$
if necessary.
 Now, $H_1 \cap H_2$ consists of five loops
which are $K$-essential both in $H_1$ and 
$H_2$.

The union of the disk $Q$ and the annuli $R_1$ and $R_2$
is $K$-compressible or $K$-$\bdd$-compressible in $(W_1, K)$
by Lemma \ref{lem:SurfaceIn20}.
In the former case,
$K$-compressing yields a disk
which is disjoint from 
$K$.
This disk is bounded by one of the five loops $H_1 \cap H_2$,
and 
then its boundary loop is contained
in 
$S$.
 Thus this disk and 
$D_1$ together show
that 
$H_2$ is weakly $K$-reducible. 

We consider the latter case.
Let $D_6$ be a $K$-$\bdd$-compressing disk of $Q \cup R_1 \cup R_2$.
If the arc $\bdd D_6 \cap H_2$ is contained in the annulus $A_k$,
then $D_6$ forms a $K$-$\bdd$-compressing disk of $Q \cup R$
before the boundary compression on $R$.
 Hence we obtain the desired conclusion
by Lemma \ref{lem:ABoundaryCompressible}.
If the arc $\bdd D_6 \cap H_2$ is contained in 
the annulus $S_2$,
then the disk $D_6$ is incident to the annulus $R_2$. 
Hence 
$R_2$ is parallel to 
$S_2$,
and 
we can isotope $H_1$ so that 
$H_1$ and $H_2$ intersect each other in three loops
which are $K$-essential both in $H_1$ and in $H_2$.
 We obtain the conclusion (2).

Suppose that the arc $\bdd D_6 \cap H_2$ is contained
in 
$S_1$.
Then 
$D_6$ is incident to 
$Q$ or 
$R_1$.
(If $D_6$ were incident to 
 $R_2$,
then it would be incident to 
$\ell_2$
which is a component of $\bdd S_2$.)
 We perform $K$-$\bdd$-compressing on $Q \cup R_1$ along $D_6$,
to obtain a disk
which intersects 
$K$ in at most one point.
 Since $S_1$ is a disk with two holes,
this disk can be isotoped
to be bounded by a loop of $\bdd S_1$.
 Hence this disk together with $D_1$ shows
that 
$H_2$ is weakly $K$-reducible.
This is the conclusion (1).

 Suppose that the arc $\bdd D_6 \cap H_2$ is contained
in the disk with two holes $A'_j$.
 We perform $K$-$\bdd$-compression
on $Q \cup R_1 \cup R_2$ along $D_6$,
to obtain a disk
which intersects 
$K$ in at most one point.
 Since $A'_j$ is a disk with two holes,
this disk can be isotoped
to be bounded by a loop of $\bdd A'_j$.
Hence this disk together with $D_1$ shows
that 
$H_2$ is weakly $K$-reducible.
 This is the conclusion (1).
\end{proof}

\medskip

\noindent{\bf Proof of Proposition \ref{prop:4}.}
 By Lemma \ref{lem:H},
the disk with three holes $S$
is $t_1$-compressible or $t_1$-$\bdd$-compressible in $(V_1, t_1)$.
 If $S$ is $t_1$-compressible in $(V_1, t_1)$,
then we are done by Lemmas \ref{lem:4-2} and \ref{lem:4-3}.
 Hence we may assume
that $S$ is $t_1$-incompressible and $t_1$-$\bdd$-compressible,
and we have three cases (1)--(3) in Lemma \ref{lem:4-holes}.

In Case (1), we are done by Lemma \ref{lem:4-1}.
In Case (2) we obtain the conclusion 
 of Theorem \ref{thm:general}.

 We consider Case (3).
 $S$ is $t_1$-incompressible and meridionally compressible
in $(V_1, t_1)$.
 The union of annuli $H_1 \cap W_2 = F_1 \cup F_2$
is compressible or $\bdd$-compressible in the handlebody $W_2$.
 If it is compressible in $W_2$,
then the compression of $F_1 \cup F_2$ yields a disk $E_2$ 
which is bounded by a loop of $\bdd S = H_1 \cap H_2$.
 Since we are considering Case (3),
there is a meridionally compressing disk $E_1$ of $S$, 
and $E_1$ is contained in $W_1$
because it intersects 
$K$.
 Then the disks $E_1$ and $E_2$ show that 
$H_2$ is weakly $K$-reducible.
 This is the conclusion 
(c) of Theorem \ref{thm:general}.

 Suppose that $F_1 \cup F_2$ is $\bdd$-compressible in $W_2$.
 Let $D'$ be a $\bdd$-compressing disk.
 If the arc $\bdd D' \cap \bdd W_2$ is in either $A_1$ or $A_2$,
then either $F_1$ or $F_2$ is parallel to one of $A_1$ and $A_2$.
 The conclusion (a) of Theorem \ref{thm:general} holds.
 Suppose that 
$\bdd D' \cap \bdd W_2$ is in $S$.
 We obtain a $t_1$-compressing disk of $S$ from $F_1 \cup F_2$
by 
$\bdd$-compression along $D'$.
 This contradicts the assumption.

This completes the proof of Proposition \ref{prop:4}.
$\qed$

\medskip

\noindent{\bf Proof of Theorem \ref{thm:general} (1).}
Propositions \ref{prop:general}, 
\ref{prop:456}
and \ref{prop:4} show Theorem \ref{thm:general} (1).
$\qed$



\section{When $|H_1\cap H_2|=1$}

 We consider in this section the case where $H_1\cap H_2$
consists of a single $K$-essential loop, say $l$.
 We will show Theorem 
\ref{thm:general} (3-2).
 We will use the condition
that $M$ has a $2$-fold branched cover with branch set $K$
in the proofs of Lemmas \ref{lem:Qcompressible}
and \ref{lem:bothincompressible}
only when applying Proposition \ref{thm:koba}.

 Since $H_2$ separates $M$,
the loop $l$ is inessential in $H_1$,
and separates the $(1,1)$-splitting torus $H_1$
into a disk, say $Q$, and a torus with one hole, say $H'_1$.
 $Q$ intersects $K$ in two points, 
and is contained in
$W_1$. 
 The torus with one hole $H'_1$ is contained in
$W_2$.
Since $H_1$ separates $M$, 
the loop $l$ separates $H_2$ into two tori with one hole.
See Figure \ref{fig:l=1}.

\begin{figure}[htbp]
\centering
\includegraphics[width=.7\textwidth]{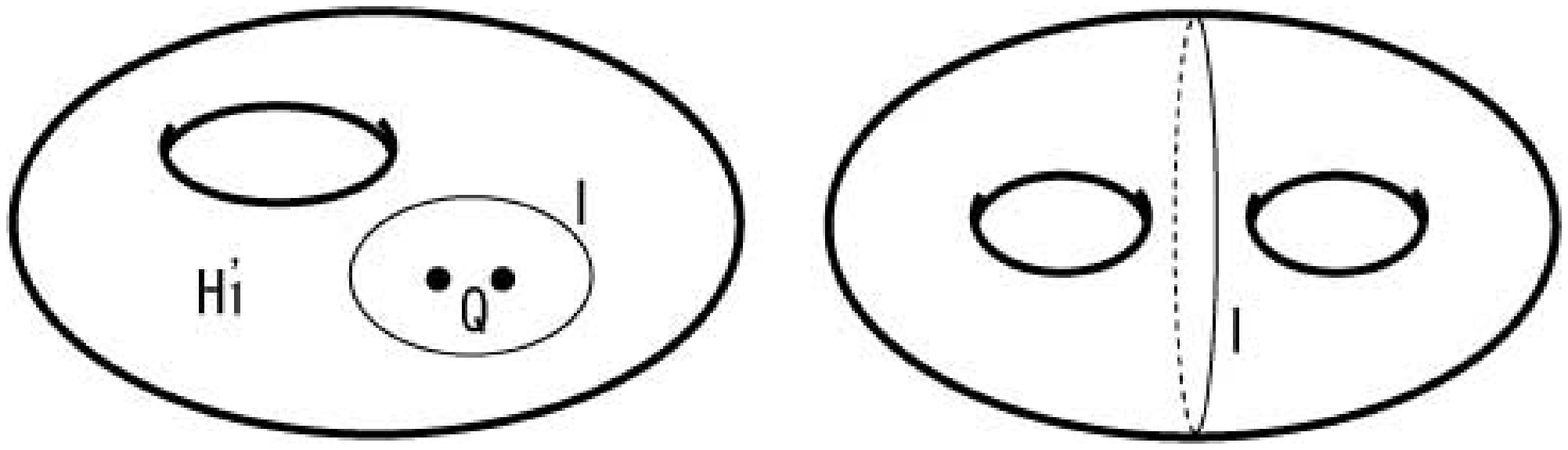}
\caption{}
\label{fig:l=1}
\end{figure}

 The torus with one hole $H'_1$ is compressible or $\bdd$-compressible
in the handlebody $W_2$.
By Lemma \ref{lem:SurfaceIn20},
the disk $Q$ is $K$-compressible or $K$-$\bdd$-compressible in $(W_1,K)$.

\begin{lemma}\label{lem:H'_1compressible}
 If $H'_1$ is compressible
in
$W_2$,
then 
the conclusion $(c)$ of Theorem \ref{thm:general} holds.
\end{lemma}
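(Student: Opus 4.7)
\medskip

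\noindent
My plan is to produce on each side of $H_2$ a disk of the required type for weak $K$-reducibility, with disjoint boundaries. The $W_2$-side disk will come directly from the hypothesis that $H'_1$ is compressible in $W_2$; the $(W_1,K)$-side disk will come from applying Lemma~\ref{lem:SurfaceIn20} to the disk $Q = H_1 \cap W_1$.

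From a compressing disk $E$ of $H'_1$ in $W_2$, I will build a compressing disk $D_2 \subset W_2$ of $H_2$ with $\bdd D_2 = l$. Since $H'_1$ is a one-holed torus, $\bdd E$ is either non-separating in $H'_1$, or else an essential separating loop. In the latter case $\bdd E$ is necessarily isotopic to $l$ in $H'_1$, since on a one-holed torus every essential separating simple closed curve is boundary-parallel. In the non-separating case, compressing $H'_1$ along $E$ yields a single disk with boundary $l$; in the separating case, gluing $E$ to the annulus in $H'_1$ between $\bdd E$ and $l$ also gives such a disk. Since $l$ is essential on $H_2$, this $D_2$ is a genuine compressing disk of $H_2$ in $W_2$.

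For the $(W_1,K)$-side, $Q$ is a disk meeting $K$ in two points, so it does not match any of the types (1)--(5) of Lemma~\ref{lem:SurfaceIn20}; hence $Q$ is $K$-compressible or $K$-$\bdd$-compressible in $(W_1,K)$. If $Q$ has a $K$-compressing disk $D_0$, then $\bdd D_0$ is essential in the twice-punctured disk $Q - K$. Since every $2$-sphere in $M$ meets the closed curve $K$ in an even number of points, $\bdd D_0$ cannot encircle a single $K$-puncture of $Q$; hence $\bdd D_0$ is parallel to $\bdd Q = l$ in $Q$, bounding an annulus $A_0 \subset Q$ disjoint from $K$. The disk $D_1 := D_0 \cup A_0 \subset W_1$ is then a $K$-compressing disk of $H_2$ in $(W_1,K)$ with $\bdd D_1 = l$. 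A small parallel push of $\bdd D_2$ in $H_2$ off $l$ then makes the two boundaries disjoint, yielding weak $K$-reducibility.

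In the remaining subcase, let $D$ be a $K$-$\bdd$-compressing disk of $Q$, with $\alpha = D \cap Q$ (essential in $Q - K$, hence separating the two punctures) and $\beta = D \cap H_2$ (contained in one of the two one-holed torus components of $H_2 \setminus l$, call it $H_2^\beta$). Performing the $K$-$\bdd$-compression with two parallel push-offs $D^\pm$ of $D$ yields two disjoint disks $F_1, F_2 \subset W_1$, each meeting $K$ in exactly one point, with boundary loops of the form (arc of $l$) $\cup$ (parallel copy of $\beta$). Each such boundary loop can be isotoped slightly into $\inter\,H_2^\beta$, and so off $l$. Moreover each $\bdd F_i$ is essential on $H_2$: otherwise capping $F_i$ by a disk in $H_2$ would yield a $2$-sphere in $M$ meeting $K$ in exactly one point, contradicting parity. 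Thus each $F_i$ is a meridionally compressing disk of $H_2$ in $(W_1,K)$ with $\bdd F_i \cap \bdd D_2 = \emptyset$, and $H_2$ is weakly $K$-reducible. The main technical step is in this last subcase: executing the boundary compression via the two parallel push-offs and verifying that the new boundary loops can be isotoped off $l$ into a single component of $H_2 \setminus l$ while remaining essential on $H_2$.
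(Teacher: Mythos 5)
Your proposal is correct and follows essentially the same route as the paper: compress $H'_1$ to get a disk in $W_2$ bounded by $l$, invoke Lemma \ref{lem:SurfaceIn20} to see that $Q$ is $K$-compressible or $K$-$\bdd$-compressible in $(W_1,K)$, and in either case produce a $K$-compressing or meridionally compressing disk of $H_2$ in $(W_1,K)$ whose boundary misses $l$. The extra details you supply (the case analysis on $\bdd E$ in the once-holed torus, and the parity arguments showing $\bdd D_0$ is parallel to $l$ and that the disks from the $\bdd$-compression are essential) are all sound and simply make explicit what the paper leaves to the reader.
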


\begin{proof}
 Compressing $H'_1$, 
we obtain a disk, say $D_2$ in $W_2$ with $\bdd D_2 = l$.
 When $Q$ is $K$-compressible,
compression on $Q$ yields a disk $D_1$ 
disjoint from $K$ with $\bdd D_1 = l$.
 Then $D_1$ and $D_2$ show that $H_2$ is weakly $K$-reducible.
 When $Q$ is $K$-$\bdd$-compressible,
$\bdd$-compression on $Q$ yields
a disk $D'_1$ intersecting $K$ at a single point.
 We can isotope $D'_1$ so that $\bdd D'_1 \cap l = \emptyset$.
 Then $D'_1$ and $D_2$ show that $H_2$ is weakly $K$-reducible.
%
%
\end{proof}

\begin{lemma}\label{lem:Qcompressible}
Suppose that
$Q$ is $K$-compressible in $(W_1, K)$,
and that $H'_1$ is incompressible in $W_2$.
 Then the conclusion $(c)$, $(d)$ or $(g)$ of Theorem \ref{thm:general} holds.
\end{lemma}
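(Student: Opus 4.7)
My strategy is to exploit the disk supplied by the $K$-compressibility of $Q$ together with a careful study of $H'_1$ inside the handlebody $W_2$, branching according to the topology of an auxiliary annulus obtained from $H'_1$. As in the proof of Lemma~\ref{lem:H'_1compressible}, the $K$-compressibility of $Q$ produces an essential disk $D_1 \subset W_1$ with $\bdd D_1 = l$ and $D_1 \cap K = \emptyset$. Since $K \cap W_2 = \emptyset$, the $K$-incompressibility of $H'_1$ in $W_2$ coincides with ordinary incompressibility, which holds by hypothesis. As $H'_1$ is a torus with one hole and hence is not among the surface types (1)--(5) of Lemma~\ref{lem:SurfaceIn20}, it must admit a $\bdd$-compressing disk $E \subset W_2$; $\bdd$-compressing along $E$ then yields a properly embedded annulus $A \subset W_2$ with $\bdd A \subset H_2$. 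If $A$ is compressible in $W_2$, a compressing disk $D'$ has boundary parallel in $H_2$ to a component of $\bdd A$, and a small isotopy achieves $\bdd D' \cap l = \emptyset$; since both $D_1$ and $D'$ avoid $K$, the pair $(D_1, D')$ witnesses conclusion (c).

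Assume therefore that $A$ is incompressible in $W_2$. Since $W_2$ is a handlebody of genus two, either (i) $A$ is $\bdd$-parallel to an annulus in $H_2 - l$, or (ii) $A$ is essential and cobounds a solid torus $V' \subset W_2$ with an annulus of $H_2$. In subcase (i), the parallelism region together with a collar of $E$ exhibits $H'_1$ as obtained from a subsurface of $H_2$ by pushing across a single $1$-handle whose core is dual to $E$; reversing this description, the dual arc, joined through $Q$ to the two points $K \cap Q = \bdd t_i$ and then isotoped into $H_1$, becomes the desired spine $\gamma$. The canceling disk $C_i$ is built from $Q$ together with the trivializing disk of $t_i$ in $(V_i, t_i)$ in such a way that $\bdd C_i \cap \gamma = \bdd \gamma = \bdd t_i$, giving conclusion (g). In subcase (ii), gluing $V'$ to $V_2$ along the appropriate annular portion of $H_2 \cup H'_1$ produces a genus-one Heegaard torus of $M$ carrying $K$, yielding conclusion (d).

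The main technical obstacle is the incompressible case. In subcase (i), one must verify the canceling-disk condition in (g) by reversing the $\bdd$-compression carefully and choosing $C_i$ so that it meets $\gamma$ only at $\bdd t_i$; in subcase (ii), one must check that the torus assembled from $V'$ and $V_2$ across the appropriate portion of $H_2 \cup H'_1$ is genuinely a Heegaard torus of $M$ and that $K$ can be isotoped onto it, using the triviality of the arcs $t_1$ and $t_2$ in the solid tori $V_1$ and $V_2$.
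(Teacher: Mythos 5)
Your opening moves match the paper's: compress $Q$ to get a disk $Q'$ with $\bdd Q'=l$ disjoint from $K$, note that $H'_1$ must be $\bdd$-compressible in $W_2$, and push $H'_1$ across the $\bdd$-compressing disk to obtain an annulus in $W_2$. But from there the argument has real gaps. First, you only track the $W_2$-side of the picture. The paper's proof hinges on two facts you never use: $Q'$ cuts $W_1$ into two solid tori, exactly one of which contains $K$ as a core, and $H'_1\cup Q'$ is a genus-one Heegaard torus of $M$ (being isotopic to $H_1$ ignoring $K$). The $\bdd$-compression is applied to this whole torus, producing an annulus $A_1\subset W_1$ as well as your $A_2\subset W_2$, and the constraint that the union of the resulting solid-torus pieces on one side must again be a solid torus is what forces the boundary slopes of $A_1$ to be longitudinal. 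That longitude condition is precisely what makes $K$ (a core of one piece) isotopic into the Heegaard torus in the case leading to (d), and what makes a core of $A_1$ usable as a spine in the case leading to (g). Your subcase (ii) one-liner --- that $V'\cup V_2$ "produces a genus-one Heegaard torus of $M$ carrying $K$" --- asserts exactly the hard step: $K$ lives in $W_1$, not in $V'\cup V_2$, and nothing in your setup shows it can be isotoped onto that torus.

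Second, your assignment of conclusions to subcases is off. In the paper, the $\bdd$-parallel case yields conclusion (c), not (g): when the annulus is parallel into $H_2$ on the side whose solid torus contains $K$ as a core, the Heegaard torus bounds a solid torus containing $K$ as a core, so $K$ is a core knot or trivial and $H_2$ is $K$-reducible; when it is parallel into the other side, one produces a meridionally compressing disk of $H_2$ disjoint from $H_1$ and invokes Proposition \ref{thm:koba} (this is the one place the $2$-fold branched cover hypothesis enters, and your proof never uses it). Conclusion (g) instead arises in the \emph{non}-$\bdd$-parallel case, and only when the $\bdd$-compressing arc lies in the component $H_{22}$ of $H_2-l$ on the side of the solid torus not containing $K$; the spine is a core of the $W_1$-side annulus (a longitude of that solid torus) cut along $D_0$ and extended through $Q$. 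Your "dual arc of the $1$-handle" lives essentially in $W_2\cup H_2$ and there is no argument that $W_1$ collapses to $K$ together with it, nor any verification of the canceling-disk condition in (g). Finally, you never make the case distinction between the two tori-with-one-hole $H_{21}$ and $H_{22}$ of $H_2-l$, which is the organizing principle of the actual proof and determines which of (c), (d), (g) occurs.
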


\begin{proof}
Let $D_0$ be a $K$-compressing disk of $Q$ in $(W_1, K)$.
Performing a $K$-compressing operation on
$Q$ along $D_0$,
we obtain a disk $Q'$ 
with 
$Q'\cap K=\emptyset$
and 
$\bdd Q'=l$.
Then $Q'$ is a $K$-compressing disk of $H_2$.
Note that $D_0$ and a subdisk of $Q$
bounds a $3$-ball $B$,
and $Q$ and $Q'$ are isotopic in $W_1$ ignoring
$K$.
Hence the torus $H'_1 \cup Q'$ is a Heegaard splitting surface of $M$.
The disk $Q'$ divides the handlebody $W_1$
into two solid tori, say $U_1$ and $U_2$,
where $U_1$ contains
$K$ as a core
(Lemma 3.3 in \cite{GHY}).
Let $H_{2i}$ be the torus with one hole $H_2 \cap U_i$
for $i=1$ and $2$.

Since
$H'_1$ is incompressible in
$W_2$,
it 
has a $\bdd$-compressing disk $D$.

 First we consider the case
where the arc $\bdd D \cap H_2$ is contained in $H_{21}$.
By
Definition \ref{def:bdd-comp},
the arc $\bdd D \cap H_2$ is an essential arc
in
$H_{21}$.
We isotope the torus $H'_1 \cup Q'$ along the disk $D$.
By this isotopy
$(H'_1 \cup Q') \cap W_1$ and $(H'_1 \cup Q')\cap W_2$ are 
deformed into annuli,
say $A_1$ and $A_2$ respectively.
 The annulus $A_1$ cuts $W_1$
into a solid torus $U'_1$ and a handlebody $U'_2$ of genus $2$.
 Note that $U'_1$ contains
 $K$ as a core.
 The annulus $A_2$ is incompressible in $W_2$
since $H'_1$ is incompressible in $W_2$.
 Hence $A_2$ cuts off a solid torus $U_3$ from $W_2$
since $l$ is separating on $H_2$ (see, for example, Section 3 in \cite{K0}).
 Note that $U'_1 \cap U_3 = \bdd U'_1 \cap H_2$.
 This is an annulus, which we will call $A$.
If the loops of 
$\bdd A_2$ are meridians of $U_3$,
then $A_2$ is compressible in $W_2$,
which is a contradiction.
If the boundary loops $\bdd A_2$ are longitudes of $U_3$,
then $A_2$ is parallel to the annulus $A$ in $W_2$,
and hence
$H'_1$ is parallel to $H_{21}$ in $W_2$
before the isotopy of $H'_1 \cup Q'$ along
$D$.
 We can isotope $H'_1$ onto $H_{21}$ 
and then slightly into $\text{int}\,W_1$.
 Then the torus $H'_1 \cup Q'$
bounds a solid torus which contains
$K$ as a core.
Remember that it is a Heegaard splitting torus of $M$.
 Since the other side of this torus is also a solid torus,
$K$ is a core knot in a lens space
or the trivial knot in the $3$-sphere.
 Hence $H_2$ is $K$-reducible 
(Proposition 4.6).
 This is the conclusion 
(c) of Theorem \ref{thm:general}.
 Thus we can assume that the boundary loops $\bdd A_2$
are not meridians or longitudes of the solid torus $U_3$.
 If the boundary loops $\bdd A_1$
are not longitudes of the solid torus $U'_1$,
then the $3$-manifold $U'_1 \cup U_3$ is not a solid torus,
which contradicts
that it is bounded by the Heegaard splitting torus $H'_1 \cup Q'$ of $M$.
 Hence the loops of $\bdd A_1$ are longitudes of $U'_1$,
and the annulus $A_1$ is parallel to the annulus $A$ in $W_1$
ignoring
$K$.
Since
$K$ is a core of the solid torus $U'_1$,
it is parallel into the annulus $A_1$ in $W_1$,
and hence into the Heegaard splitting torus $H'_1 \cup Q'$.
 Hence $K$ is the trivial knot, a core knot or a torus knot.
 This implies the conclusion 
(c) or (d) of Theorem \ref{thm:general}.

 We consider the case
where the arc $\bdd D \cap H_2$ is contained
in the torus with one hole $H_{22}$.
 We may
assume without loss of generality
that the torus with one hole $H_{21}$ is contained
in the solid torus $V_1$
 By Lemma \ref{lem:H},
$H_{21}$ is $t_1$-compressible or $t_1$-$\bdd$-compressible
in
$(V_1, t_1)$.
 In the former case,
by compressing $H_{21}$, 
we obtain a $K$-compressing disk of
$Q$ in $V_1$.
Since $Q$ has another $K$-compressing disk $D_0$ in $W_1 \cap V_2$,
$H_1$ is $K$-reducible.
Therefore
$K$ is trivial
(Theorem B in \cite{Hy1}),
and
$H_2$ is $K$-reducible
(Proposition 4.6).
This is the conclusion 
(c) of Theorem \ref{thm:general}.
 In the latter case,
let $D'$ be a $\bdd$-compressing disk of $H_{21}$ in $(V_1, t_1)$.
 If $D'$ is contained in $V_1 \cap W_2$,
then it is also a $\bdd$-compressing disk of $H'_1$
by Definition \ref{def:bdd-comp}.
We have considered this case in the previous paragraph.
Hence we may assume that the disk $D'$ is contained in $V_1 \cap W_1$.
Then $D'$ is a $K$-$\bdd$-compressing disk of the disk $Q$ in $(W_1, K)$.
We isotope the torus $H'_1 \cup Q'$
along the $\bdd$-compressing disk $D$ of $H'_1$
(rather than along 
$D'$ 
).
 By this isotopy
$(H'_1 \cup Q') \cap W_1$ and $(H'_1 \cup Q')\cap W_2$ are 
deformed into annuli,
say $R_1$ and $R_2$ respectively.
 Note that
 $R_2$ is incompressible in $W_2$
since
$H'_1$ is incompressible in $W_2$.
The annulus $R_1$ cuts $W_1$
into a solid torus $U^*_2$ and
genus 2 handlebody $U^*_1$.
Note that $U^*_1$ contains
$K$ as a core.
The annulus $R_2$ also cuts off a solid torus $U_4$ from $W_2$.
Note that $U^*_2 \cap U_4 = \bdd U^*_2 \cap H_2$.
 This is an annulus, which we will call $R$.
If
the loops of $\bdd R_2$ are meridians of $U_4$,
then $R_2$ is compressible in $W_2$,
which is a contradiction.
If
the loops of $\bdd R_2$ are longitudes of $U_4$,
then $R_2$ is parallel to
$R$ in $W_2$,
and hence
$H'_1$ is parallel to $H_{22}$ in $W_2$
before the isotopy of $H'_1 \cup Q'$ along
$D$.
We isotope
$H_1$ near $H'_1$ 
so that $H'_1$ is moved onto $H_{22}$
and 
then 
slightly into $\text{int}\,W_1$.
 Moreover,
performing a $K$-$\bdd$-compressing operation
on a copy of the disk $Q$ along the $K$-$\bdd$-compressing disk $D'$,
we obtain a meridionally compressing disk of $H_2$ in $(W_1, K)$.
 We can isotope this disk slightly off of $Q$ in $(W_1, K)$
and hence off of $H_1$.
 Then Proposition \ref{thm:koba} shows
that
$H_2$ is weakly $K$-reducible.
 This is the conclusion 
(c) of Theorem \ref{thm:general}.

Hence we may assume that
the loops of 
$\bdd R_2$ are
neither meridians nor longitudes of
$U_4$.
Thus the loops of $\bdd R_1$ are longitudes of $U^*_2$.
We take a core loop $c_0$ of
$R_1$
so that it intersects the $K$-compressing disk $D_0$ of $Q$
in a single arc.
 Let $c$ be the arc $\text{cl}\,(c_0 - D_0)$.
 We take a canceling disk $C_2$ of the arc $t_2 = K \cap B$
in $(V_2, t_2)$
so that $C_2$ is entirely contained in $B$.
 We extend the arc $c$ adding two arcs on the disk $Q$
such that they connects endpoints of $c$ and $\bdd t_2$
and that their interiors are disjoint from the arc $\bdd C_2 \cap Q$.
 Then the extended arc $\gamma$
forms a spine of $(W_1, K)$
and entirely contained in
$H_1$.
 We can isotope the arc $t_2$ along $C_2$
onto the arc $\bdd C_2 \cap Q$.
 Thus we obtain the conclusion 
(g) of Theorem \ref{thm:general}.
\end{proof}

\begin{lemma}\label{lem:bothincompressible}
Suppose that
$Q$ is $K$-incompressible in $(W_1, K)$,
and that
$H'_1$ is incompressible in $W_2$.
Then either one of the two conditions below holds.
\begin{enumerate}
\renewcommand{\labelenumi}{(\theenumi)}
\item
 The conclusion $(c)$ of Theorem \ref{thm:general} holds.
\item
We can isotope the $(1,1)$-splitting torus $H_1$ in $(M,K)$
so that $H_1$ intersects $W_1$ in a separating essential disk
which intersects
$K$ in two points
and is $K$-compressible.
\end{enumerate}
\end{lemma}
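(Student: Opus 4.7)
\medskip

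\noindent\textbf{Proof plan.}
The plan is to imitate the strategy of Lemma \ref{lem:Qcompressible}, but with $K$-$\bdd$-compression of $Q$ in the role played there by $K$-compression. The ultimate target is to either produce disks witnessing conclusion $(c)$ of Theorem \ref{thm:general}, or to replace $Q$ via isotopy of $H_1$ by a $K$-compressible disk, so that Lemma \ref{lem:Qcompressible} can be applied to the resulting configuration.

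First I would set up the two main tools. Since $Q$ is a disk meeting $K$ in two points and is $K$-incompressible, Lemma \ref{lem:SurfaceIn20} rules out all of the listed essential pieces except $K$-$\bdd$-compressible ones, so $Q$ admits a $K$-$\bdd$-compressing disk $D_0 \subset W_1$ with $\bdd D_0 = \alpha \cup \beta$, $\alpha \subset Q$ an essential arc separating the two intersections $Q \cap K$ and $\beta \subset H_2$; the arc $\beta$ lies in one of $\overline{H_{21}}$, $\overline{H_{22}}$, say $\overline{H_{22}}$. Performing $K$-$\bdd$-compression on $Q$ along $D_0$ yields two disks $Q_1, Q_2 \subset W_1$ meeting $K$ once each, whose boundaries sit in $\overline{H_{22}}$ and which are candidate meridionally compressing disks of $H_2$. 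Separately, since $H_1'$ is an incompressible one-holed torus in the handlebody $W_2$, it is either $\bdd$-parallel into $H_{21}$ or $H_{22}$, or it admits a $\bdd$-compressing disk $D \subset W_2$ whose arc $\beta' = \bdd D \cap H_2$ lies in one of $\overline{H_{21}}$, $\overline{H_{22}}$.

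I would then split into cases according to which side of $l$ contains $\beta'$ (or, in the $\bdd$-parallel sub-case, which one-holed torus of $H_2$ is parallel to $H_1'$). In the ``opposite side'' case, $\bdd$-compressing $H_1'$ along $D$ produces an incompressible annulus $A \subset W_2$ with boundary in $\overline{H_{21}}$; using the standard structure of essential annuli in the genus-two handlebody $W_2$ together with the $\bdd$-parallelism sub-case, I would extract from $A$ a compressing disk of $H_2$ in $W_2$ with boundary in $\overline{H_{21}}$. Pairing this disk with $Q_1$ (whose boundary lies in $\overline{H_{22}}$, hence is disjoint from it after a small isotopy off $l$) exhibits weak $K$-reducibility of $H_2$, which is conclusion $(c)$. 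In the ``same side'' case, I would instead isotope $H_1$ by pushing a neighborhood of $\alpha$ in $Q$ across $D_0$ (or, in the $\bdd$-parallel sub-case, by pushing $H_1'$ across the parallelism region, $\bdd$-compressing along $D$ first if necessary). The isotopy is chosen so that the new intersection $\tilde H_1 \cap H_2$ is the single $K$-essential loop $l^*$ obtained from $l$ by the band move along $\beta$, and the new disk $\tilde Q = \tilde H_1 \cap W_1$ still meets $K$ in two points but is separating and essential in $W_1$. A copy of $D_0$, now with its $\alpha$-side moved to $\tilde H_1$, becomes a $K$-compressing disk of $\tilde Q$, giving conclusion (2).

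The main obstacle is the second family of cases: one has to verify that the push-across isotopy really leaves $|H_1 \cap H_2| = 1$ rather than introducing additional components, and that the resulting $\tilde Q$ is genuinely $K$-compressible rather than $K$-incompressible for some subtler reason. This will require the usual innermost-loop/outermost-arc cleanup of $D_0$ (and of $D$, when used) against $H_2$ before performing the isotopy, analogous to the cleanups inside the proof of Lemma \ref{lem:Qcompressible}, together with careful bookkeeping of the boundary slope of $\tilde Q$ on $H_2$ to confirm its $K$-essentiality.
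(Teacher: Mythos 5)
Your opening moves match the paper's: take a $K$-$\bdd$-compressing disk $D_0$ of $Q$ in $(W_1,K)$ and a $\bdd$-compressing disk $D$ of $H'_1$ in $W_2$, and sort cases by which solid torus $V_i$ contains the arcs $\bdd D_0\cap H_2$ and $\bdd D\cap H_2$. (The paper in fact shows one can always arrange the ``opposite side'' configuration $D_0\subset V_1$, $D\subset V_2$, using a $\bdd$-compressing disk of $H_{2i}=H_2\cap V_i$ in $(V_i,t_i)$.) But both of your main cases contain genuine gaps. In the opposite-side case, the step ``extract from $A$ a compressing disk of $H_2$ in $W_2$ with boundary in $\overline{H_{21}}$'' is unjustified: the incompressible annulus $A$ cuts a solid torus $U$ off of $W_2$, and when $\bdd A$ runs $p\ge 2$ times longitudinally around $U$ there need be no essential disk of $W_2$ disjoint from $\bdd A$ on the desired side (the core of such an annulus can meet every essential disk of $W_2$). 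This is not a removable technicality: the paper's proof runs entirely in the opposite-side configuration and still cannot always reach conclusion $(c)$ --- one sub-case falls through to conclusion $(2)$ --- and in the sub-case where the relevant annulus is boundary-parallel it must invoke Proposition \ref{thm:koba} (hence the $2$-fold branched cover hypothesis), which your argument never uses.

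In the same-side case the proposed isotopy does not do what you claim. Banding the separating loop $l$ to itself along an arc in $H_{2i}$ produces \emph{two} circles, so pushing $N(\alpha)\subset Q$ across $D_0$ leaves $|H_1\cap H_2|=2$ and turns $H_1\cap W_1$ into two disks $Q_1,Q_2$, each meeting $K$ once --- not a single separating disk meeting $K$ twice --- and $D_0$ is consumed by that isotopy, so it cannot afterwards serve as a $K$-compressing disk of the new surface. The paper reaches conclusion $(2)$ quite differently: after isotoping along \emph{both} $D_0$ and $D$ it obtains a four-holed sphere $S=H_1\cap H_2$ and an annulus $R_2\subset H_2$ that is properly embedded in $(V_2,t_2)$; a $t_2$-$\bdd$-compressing disk $Y$ of $R_2$ lying in $W_1\cap V_2$ is what eventually compresses one of the two one-holed tori of $H_2-\bdd Q'$ to produce the $K$-compressing disk of the reassembled disk $Q'=Q_1\cup N(\alpha)\cup Q_2$, where $\alpha$ is an arc in $S$ joining $\bdd Q_1$ to $\bdd Q_2$ and avoiding $\bdd Y$. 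You would need this reassembly step, and a source for the $K$-compressing disk other than $D_0$ itself, to obtain conclusion $(2)$.
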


We have already studied the situation of the conclusion (2)
in Lemmas \ref{lem:H'_1compressible} and \ref{lem:Qcompressible}.

\begin{proof}
Since $H'_1$ is incompressible in the handlebody $W_2$,
it has a $\bdd$-compressing disk $D_2$ in $W_2$.
 Since $Q$ is $K$-incompressible in $(W_1, K)$,
it has a $K$-$\bdd$-compressing disk $D_1$ in $(W_1, K)$
by Lemma \ref{lem:SurfaceIn20}.
 For $i=1$ and $2$,
let $U_i$ be the solid torus $V_i \cap W_1$,
and $H_{2i}$ the torus with one hole $H_2 \cap V_i$.

 For $i=1$ and $2$,
$H_{2i}$
is $t_i$-compressible or $t_i$-$\bdd$-compressible in $(V_i, t_i)$
by Lemma \ref{lem:H}.
 In the former case,
compression on $H_{2i}$ yields a $K$-compressing disk of $Q$ or $H'_1$.
This contradicts the assumption of this lemma.
Thus $H_{2i}$ is $t_i$-incompressible
and $t_i$-$\bdd$-compressible in $(V_i, t_i)$
for $i=1$ and $2$.

We show that we can take $D_1$ and $D_2$
so that they are separated by
$H_1$.
Suppose that both $D_1$ and $D_2$ are contained in $V_1$, say.
Let $D'$ be a $\bdd$-compressing disk of $H_{22}$ in $(V_2, t_2)$.
 If $D'$ is contained in $W_1$ (resp. in $W_2$),
then
it is also a $K$-$\bdd$-compressing disk of $Q$ (resp. $H'_1$)
by Definition \ref{def:bdd-comp}.
 We can substitute $D'$ for $D_1$ (resp. $D_2$)
so that $H_1$ separates $\bdd$-compressing disks $D_1$ and $D_2$.

 Thus we may assume, without loss of generality,
that $D_1 \subset V_1$ and $D_2 \subset V_2$.
 Let $P=N(\partial Q \cup (Q\cap \partial D_1))$ be a neighborhood
of the union of the boundary loop $\partial Q$
and the arc $Q \cap \partial D_1$ in $Q$.
 Then $P$ is a disk with two holes.
 We can isotope $Q$ along $D_1$
so that $P$ is isotoped into $H_2$.
 The intersection $H_1 \cap \text{int}\,W_1$ is deformed
into two disks each of which intersects
$K$
transversely in a single point.
Let $Q_1$ and $Q_2$ denote the closures of these disks.
 The boundary loops $\partial Q_1$ and $\partial Q_2$ are parallel
on
$H_2$, and
bound an annulus $R_1$ on $H_2$.
Let $B=N(H'_1 \cap \partial D_2)$ be a neighborhood
of the arc $H'_1 \cap \partial D_2$ in $H'_1$.
 We can isotope $H'_1$ along $D_2$
so that the disk $B$ is isotoped into $H_2$
and that $B \cap P = B \cap \partial H'_1$.
 Then after this isotopy
$H_1$ intersects $H_2$ in a $2$-sphere with four holes $S$.
 The intersection $H_1 \cap \text{int}\,W_2$ is deformed
into an annulus,
the closure of which we will call $A_{12}$.
 The boundary loops $\partial A_{12}$ are parallel
on
$H_2$,
and
bound an annulus $R_2$ in $H_2$.

The annulus $A_{12}$ is incompressible in $W_2$
since
$H'_1$ is incompressible. 
Hence $A_{12}$ has a $\bdd$-compressing disk $Z$
in 
$W_2$.
 If the arc $H_2 \cap \partial Z$ is contained in the annulus $R_2$,
then $A_{12}$ is parallel to $R_2$ in $W_2$.
 We isotope
 $H_1$
along the parallelism between $A_{12}$ and $R_2$,
so that $H_1$ is entirely contained in $W_1$.
 We can take a parallel copy of the meridionally compressing disk $Q_1$
of $H_2$ in $(W_1, K)$
so that it is disjoint from $H_1$.
 Then Proposition \ref{thm:koba} shows
that
$H_2$ is weakly $K$-reducible.
 Hence we can assume
that the $\bdd$-compressing disk $Z$ is contained in $W_2 \cap V_1$.
 Performing a $\bdd$-compressing operation on a copy of $A_{12}$
along $Z$,
we obtain a separating essential disk $E$ in $W_2$.
We isotope this disk $E$ slightly off of
$R_2$.

 The annulus $R_2$ is
$t_2$-compressible or $t_2$-$\bdd$-compressible in $(V_2, t_2)$
by Lemma \ref{lem:H}.
 In the former case,
let $X$ be a $t_2$-compressing disk of $R_2$.
 If $X$ is contained in $W_1$,
then the disks $E$ and $X$ together show
that
$H_2$ is weakly $K$-reducible.
 This is the conclusion (1).
 If $X$ is contained in $W_2$,
then the disks $Q_1$ and $X$ together show
that
$H_2$ is weakly $K$-reducible.
This is
the conclusion (1) again.
In the latter case,
let $Y$ be a $t_2$-$\bdd$-compressing disk of $R_2$.
First, suppose that $Y$ is contained in $W_2$.
Then
$A_{12}$ is parallel to
$R_2$ in $W_2$.
 We have considered this situation in the previous paragraph.
Therefore we may assume that $Y$ is contained in $W_1 \cap V_2$.
We can isotope
$Y$ near its boundary loop
so that the arc $\bdd Y \cap H_1$
is entirely contained in the $2$-sphere with four holes $S$,
because $(H_1 \cap W_1) - S$ is a union of the two disks $Q_1$ and $Q_2$
each of which intersects $K$ transversely in a single point.
 There is an arc $\alpha$ on $S$
such that $\alpha$ connects
the two boundary loops $\bdd Q_1$ and $\bdd Q_2$
and that $\alpha$ is disjoint from $\bdd Y$.
 We take a regular neighborhood $N(\alpha)$ of $\alpha$ on $S$,
and isotope the interior of the disk $Q'=Q_1 \cup N(\alpha) \cup Q_2$
slightly into $\text{int}\,W_1$.
 We isotope the remainder part $H_1 - Q'$ slightly
into $\text{int}\,W_2$,
fixing $Y$ with $\bdd Y \subset H_2$. 
 Then, after this isotopy,
$H_1 \cap W_1 = Q'$ the essential separating disk
intersecting
$K$ in two points,
and $Q'$ is disjoint from
$Y$.
 The loop $\bdd Q'$ separates
 $H_2$
into two tori with one hole.
 One of them contains the boundary loop $\bdd Y$,
and gives a $K$-compressing disk of $Q'$ in $(W_1, K)$
when compressed along $Y$.
 Thus we obtain the conclusion (2) of this lemma.
\end{proof}

\medskip

\noindent{\bf Proof of Theorem 
\ref{thm:general} (3-2).}\\
Lemmas \ref{lem:H'_1compressible}, \ref{lem:Qcompressible}
and \ref{lem:bothincompressible}
together show Theorem \ref{thm:general} (3-2).



\section{Examples}\label{sec:example}

We observe an example which realizes the conclusion (2) 
in Theorem \ref{thm:main}. 
This example was informed us by H.J. Song.
 The knot is the Morimoto-Sakuma-Yokota knot of type $(5, 7, 2)$
\cite{MSY}.
Let $K$ be a knot in Figure \ref{fig:572}. 
According to SnapPea, it is hyperbolic.
It is easy to check that $\gamma_1$ and $\gamma_2$ are unknotting 
tunnels of $K$. 
We can see that $\gamma_2$ is isotopic into the $(1, 1)$-splitting torus
as in Figure \ref{fig:572torus}.

Ishihara informed us that the depth of $\gamma_2$ is equal to 2, 
and this implies that $\gamma_2$ is not (1,1)-tunnel  
according to the results by Cho and McCullough \cite{CM1,CM2}.
Ishihara used his algorithm to compute parameters of tunnels \cite{I}.

The two endpoints $\partial \gamma_2$
divide the knot $K$ into two subarcs $K'$ and $K''$
as shown in Figure \ref{fig:572torus}.
By sliding the arc $K'$ on $K'' \cup \gamma_2$, 
we may have the other examples. 

\begin{figure}[htbp]
\centering
\includegraphics[width=.4\textwidth]{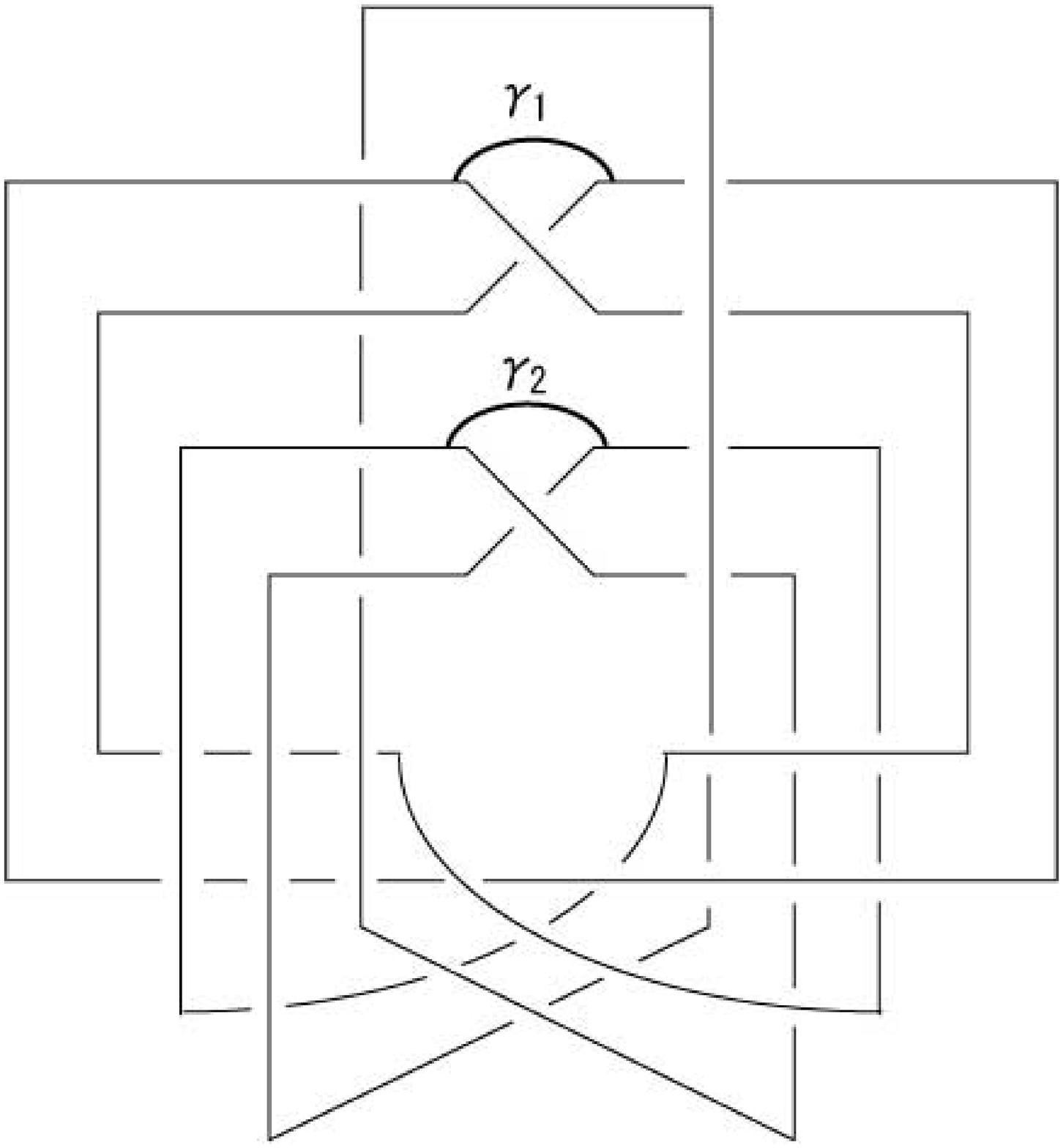}
\caption{}
\label{fig:572}
\end{figure}

\begin{figure}[htbp]
\centering
\includegraphics[width=.4\textwidth]{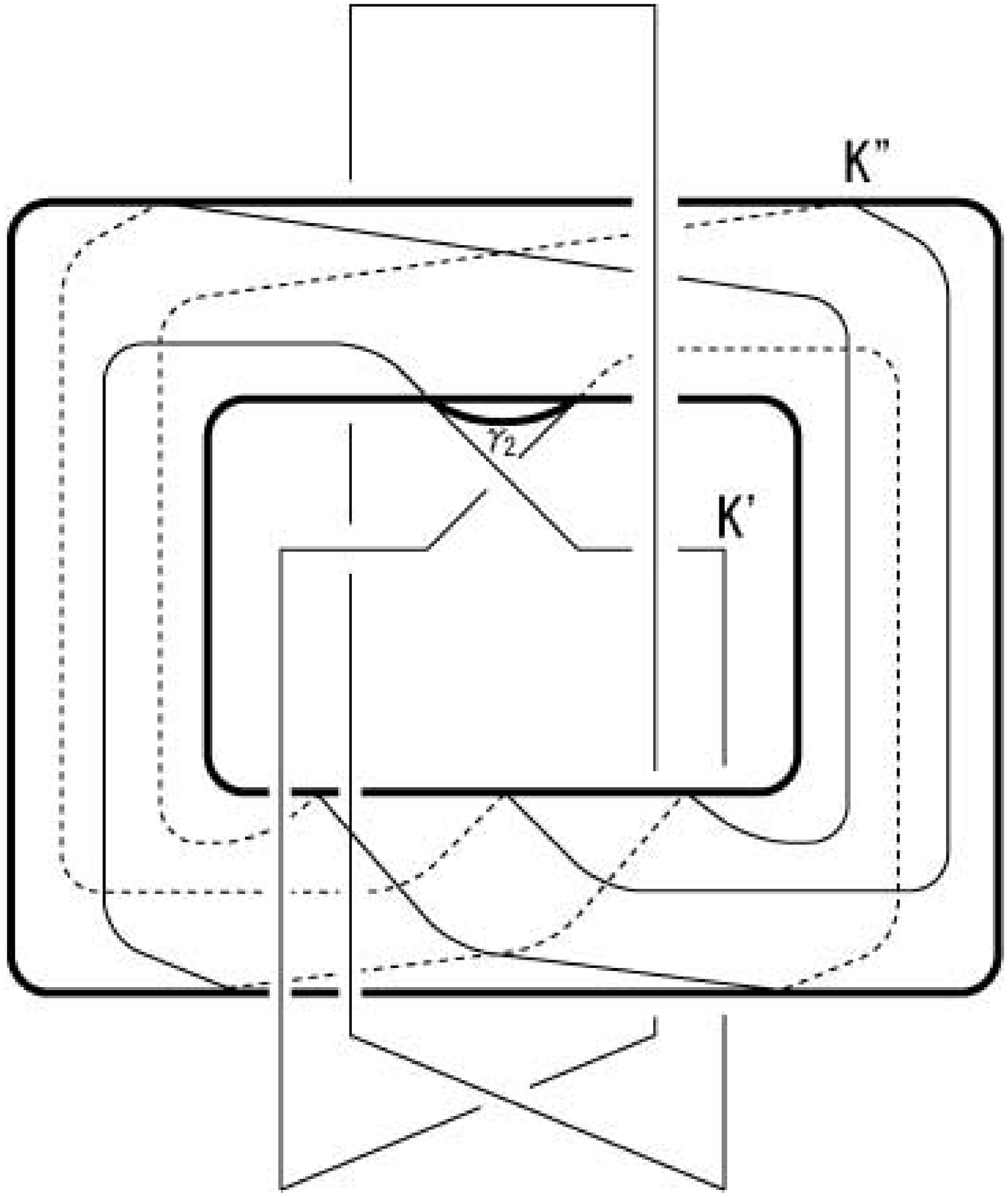}
\caption{}
\label{fig:572torus}
\end{figure}

\bibliographystyle{amsplain}

\end{document}